\newtheorem{theorem}{Theorem}[section]
\newtheorem{proposition}[theorem]{Proposition}
\newtheorem{lemma}[theorem]{Lemma}
\newtheorem{corollary}[theorem]{Corollary}
\theoremstyle{definition}
\newtheorem{definition}[theorem]{Definition}
\newtheorem{remark}[theorem]{Remark}
\newtheorem{notation}[theorem]{Notation}
\newcommand{\V}{\mathbb{V}}
\newcommand{\Z}{\mathbb{Z}}
\newcommand{\R}{\mathbb{R}}
\newcommand{\N}{\mathbb{N}}
\numberwithin{equation}{section}
\date{\today}
\begin{document}

\title{The spectrum problem for $\ell$-groups and for MV-algebras: a categorical approach}

\author{Giuseppina Gerarda Barbieri, Antonio Di Nola, Giacomo Lenzi}
\maketitle

\begin{abstract} As a main result, we characterize prime spectra of abelian lattice ordered groups. Further we introduce some categories based on spectral spaces, lattices and Priestley spaces, and we relate these categories with each other and with the category of presented MV-algebras, by means of functors. We turn to lattices and offer a simple characterization of 1) maps whose Stone dual preserves closed sets, and 2)  closed epimorphisms between distributive lattices as well as their Stone duals. We have a characterization of the variety generated by the Chang MV-algebra and we study this variety. Next we generalize the results to every variety generated by a Komori chain. Finally we discuss homogeneous polynomials in MV-algebras. 
\end{abstract}

ACM classification: 06D05, 06D20, 06D35, 06D50.

\section{Introduction} 

The spectrum problem for MV-algebras is the problem of characterizing 
the prime ideal spectrum of MV-algebras in pure topological terms. 
This problem appears in the list of open problems of  \cite{M1}.  
Note that by the Mundici $\Gamma$ functor of \cite{M86}, between the categories of MV-algebras and 
abelian lattice ordered groups ($\ell$-groups)  with strong unit, 
the problem is equivalent 
to studying prime spectra of the latter groups. Another functor relevant for the problem is the functor 
$\Delta$ of \cite{DL1} between $\ell$-groups and perfect MV-algebras. Let us recall some literature related to the problem. 

The seminal work on this topic is the paper by  Stone \cite{S}, 
where he proved that the category of Boolean algebras and their homomorphisms is equivalent to the opposite of the category of compact Hausdorff zero-dimensional spaces (known as Stonean spaces or Boolean spaces) and the continuous maps between them. One year later  in \cite{S1} Stone himself
obtained a duality between bounded distributive lattices and the category of spectral spaces and so-called spectral maps.  A further duality, known as Priestley duality, concerning bounded distributive lattices and Priestley Spaces, was established by H. Priestley (see \cite{P70}).

Later,  on this  research direction, in \cite{H}  Hochster 
characterized the prime spectra of commutative rings with unity, 
and proved that they coincide with spectral spaces. 

In \cite{DFT} spectra of groups are  studied, with a suitable definition of prime subgroup and the prime spectrum of a group. 

In \cite[Theorem 3.12]{DP} we have a categorical duality between the category of MV-algebras and a category of sheaves of MV-algebras, where the base space is the prime spectrum of an MV-algebra, and whose stalks are MV-chains. In this sheaf representation, stalks are well understood, but the base space is given by spectra of MV-algebras, a kind of space which was not very well understood at the time (now \cite{DL} clarifies better the situation). 

Likewise, in \cite[Theorem 2.22]{FG95} we have a categorical equivalence between the category of MV-algebras and a category of sheaves of MV-algebras, where the base space is the maximal spectrum of an MV-algebra (that is, a compact Hausdorff space) and whose stalks are local MV-algebras. In this representation, conversely with \cite{DP}, the base space is understood, but the stalks range in a wide class such as local MV-algebras. 

We note that both \cite{DP} and \cite{FG95} give possible realizations of the intuitive idea of MV-space, analogous to the idea of ringed space in algebraic geometry. The fact that the analogous of ringed space for MV-algebras does not exist in literature, is probably due to the fact that the notion of localization is, in general, difficult to realize in MV-algebras.

In \cite{FGM} we find a duality between a generalization of MV-algebras and Priestley spaces with an appropriate extra structure. Note that the Priestley space of a distributive lattice can be realized in various ways,  for instance we can take the set of the prime ideals of the lattice, equipped with the inclusion order and with the patch topology (not with the Zariski topology).

The papers \cite{W19} and \cite{W21} contain a deep study of the spectrum problem from a logical point of view. In  \cite{W19} it is shown that second countable spectral spaces which are prime spectra of MV-algebras are characterized by a simple first order formula, whereas in \cite{W21} is is shown that no infinitary first order logic can express the prime spectra of MV-algebras. Other information is contained in \cite{W22}.

The paper \cite{DL} is a kind of predecessor of the present paper.  
  In \cite{DL}  there is a characterization of those topological spaces $X$ which are homeomorphic to the prime spectrum of an MV-algebra, in terms of the lattice $\overset{\circ}{K}(X)$ of the compact open sets of $X$. Apart from 
spectrality of $X$, which is a well known necessary condition, $\overset{\circ}{K}(X)$ must be a closed epimorphic  image of the lattice of what are called ``cylinder rational polyhedra'' in $[0,1]^Y$ for some set $Y$, possibly infinite. Closedness here is a technical  condition on semilattice homomorphisms (the original definition e.g. in \cite{W19}  is slightly involved, but we give a simple characterization of it in Theorem \ref{thm:closed}). A cylinder rational polyhedron here means the zeroset of a McNaughton function and is a natural generalization of rational polyhedra in possibly infinite dimension. 

Another predecessor of this paper is \cite{MS}, where a duality is given in the particular case when the MV-algebras are semisimple. The topological spaces involved in the duality of \cite{MS} are exactly the compact Hausdorff spaces. 

\subsection{Contribution of this paper}

The main goal of this paper is twofold. First, we wish to 
 understand prime spectra of significant subclasses of MV-algebras or $\ell$-groups, using also the characterization of spectra of arbitrary MV-algebras 
given in \cite{DL}. Second, we propose seemingly new functors involving the class of all MV-algebras, partially building on classical dualities. 

We list the main results of the paper.

We are interested both in MV-algebras and $\ell$-groups, since they are deeply related with each other and they have each its own communities of scholars. For $\ell$-groups, via the particular case of Riesz spaces, we  have for instance the community of functional analysis, and for MV-algebras, we have for instance the community of quantum mechanics. The most significant result of this paper from the point of view of $\ell$-groups is the following.

Let $T$ be a cylinder fan based topological space,  which in fact is a spectral space such that for some cardinal
 $k$, possibly infinite, the lattice of compact open sets of $T$ is a lattice anti isomorphic to the lattice of cylinder fans in $\Delta({\mathbb R})^k$  (see Definition \ref{def:cylfan}).

\begin{theorem}\label{thm:lspec} 
A topological space $X$ is the spectrum of an abelian $\ell$-group $G$
if and only if there is a closed subset $Y$ of a cylinder fan based space $T$, 
such that $X=Y\setminus \{M\}$, where $M$ is the only closed point in $Y$.\end{theorem}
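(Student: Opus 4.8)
The plan is to transfer the known characterization of MV-algebra spectra from \cite{DL} through the $\Delta$ functor of \cite{DL1} connecting abelian $\ell$-groups with perfect MV-algebras, and to read off what the ``cylinder rational polyhedron'' side of that characterization becomes once we restrict to perfect MV-algebras. First I would recall that $\Delta$ is an equivalence between the category of abelian $\ell$-groups and the category of perfect MV-algebras, and that under this equivalence the prime ideal spectrum of $G$ corresponds to the prime spectrum of $\Delta(G)$ with its single maximal (closed) point removed: perfect MV-algebras are local, so $\Delta(G)$ has a unique maximal ideal $M$, whose removal from $\mathrm{Spec}(\Delta(G))$ yields a space homeomorphic to $\mathrm{Spec}(G)$. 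Thus $X$ is the spectrum of an abelian $\ell$-group if and only if $X = Y \setminus \{M\}$ where $Y = \mathrm{Spec}(A)$ for a perfect MV-algebra $A$ and $M$ is the unique closed point of $Y$.

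Next I would invoke the characterization from \cite{DL}: $Y$ is the prime spectrum of some MV-algebra iff $Y$ is spectral and $\overset{\circ}{K}(Y)$ is a closed epimorphic image of the lattice of cylinder rational polyhedra in $[0,1]^{Z}$ for some $Z$. The task is to show that, restricting to perfect MV-algebras $A$ (equivalently, those whose spectrum has a unique closed point), the free object governing the representation is no longer the free MV-algebra on $Z$ generators but the free perfect MV-algebra, i.e. $\Delta$ applied to a free abelian $\ell$-group; and that the zeroset lattice of McNaughton functions gets replaced by the lattice of \emph{cylinder fans} in $\Delta(\mathbb{R})^{k}$ — which is precisely the definition of a cylinder fan based space $T$ in Definition \ref{def:cylfan}. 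Concretely: a perfect MV-algebra $A$ is a closed epimorphic image of a free perfect MV-algebra $F_k = \Delta(\text{free } \ell\text{-group on } k \text{ generators})$; dualizing, $\mathrm{Spec}(A)$ is a closed subspace of $\mathrm{Spec}(F_k) = T$, and the lattice $\overset{\circ}{K}(T)$ is anti-isomorphic to the lattice of cylinder fans, since the principal ideals of $F_k$ (equivalently the compact opens of its spectrum) are exactly the zerosets of elements of the free perfect MV-algebra, which are the cylinder fans. The closedness condition on the quotient map $F_k \twoheadrightarrow A$ transfers, via Theorem \ref{thm:closed}, to $Y$ being a \emph{closed} subset of $T$.

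Putting the two halves together: $X$ is a spectrum of an abelian $\ell$-group iff $X = Y \setminus \{M\}$ with $Y$ a closed subset of a cylinder fan based space $T$ and $M$ the unique closed point of $Y$. For the converse direction I would start from such a $Y \subseteq T$, recover from $\overset{\circ}{K}(Y)$ a closed epimorphic image $A$ of a free perfect MV-algebra, note that $A$ is then perfect (having a spectrum with a unique closed point forces local, and a closed epimorphic image of a perfect MV-algebra is perfect), and set $G = \Delta^{-1}(A)$, so that $\mathrm{Spec}(G) \cong Y \setminus \{M\} = X$. I expect the main obstacle to be the bookkeeping around the ``closed'' condition: verifying that closedness of the semilattice quotient $F_k \twoheadrightarrow A$ corresponds exactly to $Y$ being topologically closed in $T$ (rather than merely a spectral subspace), and checking that this condition is preserved under passing back and forth through $\Delta$ and through the removal of the closed point $M$. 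A secondary point requiring care is confirming that the compact opens of $\mathrm{Spec}(F_k)$ are genuinely anti-isomorphic to the cylinder fan lattice of $\Delta(\mathbb{R})^k$ — i.e. that the ``zeroset of a McNaughton function'' picture of \cite{DL} specializes correctly to the perfect setting, where McNaughton-type functions take values in $\Delta(\mathbb{R})$ rather than $[0,1]$.
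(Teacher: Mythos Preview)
Your reduction via the $\Delta$ equivalence to perfect MV-algebras, and the identification $\mathrm{Spec}(G)\cong\mathrm{Spec}(\Delta(G))\setminus\{M\}$, match the paper's first step exactly. The gap is in the second step, where you misidentify what a cylinder fan based space is.

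By Definition~\ref{def:cylfan} and Theorem~\ref{thm:fan}, a cylinder fan based space $T$ is the spectrum of the \emph{free $\V(C)$-algebra} $Free(\V(C),k)$, not of $\Delta(\text{free }\ell\text{-group on }k)$. These are different: the free $\V(C)$-algebra has $2^k$ maximal ideals (one for each Boolean point of $\{0,1\}^k$), so $T$ has many closed points, whereas $\Delta$ of a free $\ell$-group is already perfect, hence local, and its spectrum has a single closed point. In particular your assertion ``$\mathrm{Spec}(F_k)=T$'' and the claim that the compact opens of $T$ correspond to zerosets of elements of the ``free perfect MV-algebra'' are incorrect; cylinder fans are zerosets of elements of $Free(\V(C),k)$. (Perfect MV-algebras do not form a variety, so ``free perfect MV-algebra'' is in any case problematic.)

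The paper's route avoids this by using the factorization \emph{perfect $=$ $\V(C)$ and local} (cf.\ \cite[Proposition~5(5)]{BDG}). Corollary~\ref{cor:vcspec} gives: spectra of $\V(C)$-algebras are exactly the closed subsets of cylinder fan based spaces. Locality is exactly the condition of having a unique closed point. Combining, spectra of perfect MV-algebras are exactly the closed subsets $Y$ of some cylinder fan based $T$ with a unique closed point $M$; then apply your first step. Note also that the passage from ``quotient of a free algebra'' to ``closed subspace of its spectrum'' is just Proposition~\ref{i}(iii); the closed-epimorphism machinery of Theorem~\ref{thm:closed} and \cite{W19} is not needed here and is a detour.
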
 

 This result will be proved in Section \ref{group}  using the equivalence $\Delta$ of \cite{DL1} between $\ell$-groups and perfect MV-algebras. 
Recall that an MV-algebra is perfect when it is generated by its infinitesimals. Note that the theorem does not suppose that $G$ has a strong unit, whereas many results on $\ell$-groups require a strong unit. 

 As a further categorical result on $\ell$-groups, we mention that in \cite{A} Abbadini, Marra and Spada  prove that  the category of metrically complete
$\ell$-groups is dually equivalent to the category of a-normal spaces and a-maps between them, they call  an arithmetic space (a-space, for short)   a topological space equipped with a function,
called the denominator map, that ranges in the set of natural numbers, they call a-normal spaces  compact Hausdorff arithmetic spaces satisfying a separation axiom.

In theorem \ref{thm:kospec} we prove that, if $A=Free(k)$  is a free MV-algebra over $k$ generators, then 
 there is a lattice isomorphism between  the lattice $\overset{\circ}{K}(Spec(A))$ of  the  compact open subsets  of $Spec(A)$ and  the lattice of the cylinder rational polyhedra of $[0,1]^k$.

In  Theorem \ref{thm:fond} we give a characterization of the prime spectra of free MV-algebras, and in Theorem \ref{thm:max} we give a partially analogous characterization of maximal spectra of free MV-algebras (that is, hypercubes).

In Theorem \ref{thm:sic} we establish a functor from the category of presented MV-algebras to a category SIC of ``spectra in context'', which are triples $(X,C,m)$ where $X$ is the spectrum of a free MV-algebra, $C$ is another spectral space, and $m$ is a suitable function from $C$ to $X$.  

In Theorem \ref{thm:siclic}  we establish a duality, based on Stone duality,  between SIC and a category LIC (lattices in context) similar to SIC but involving pairs of lattices and lattice maps. Similarly, in Theorem \ref{thm:Pries} we have a duality, based on Priestley duality, between LIC and another category PIC (Priestley in context) involving pairs of Priestley spaces and Priestley maps. 

In Theorem \ref{thm:closed} we characterize closed surjective maps between bounded distributive lattices (in the sense of \cite{DL} and \cite{W19}).

In Theorem \ref{theor} we characterize the algebras in the variety $\V(C)$ generated by the Chang algebra, in terms of their ideals, and in
 theorem \ref{thm:local} we do the same for local MV-algebras. 

In Theorem \ref{thm:fan} we treat spectra of free MV-algebras in $\V(C)$, and in theorem \ref{thm:mfan} we do the same for $\V(K_m)$, where $K_m$ is the $m$-th Komori algebra.  

\subsection{Outline of the paper}

We begin with the preliminary section \ref{sect:prelim}. In section \ref{Spectra} we recall basic properties of spectra of MV-algebras. In Section 
\ref{Spectrum} we characterize spectra of free MV-algebras and in Section \ref{MV} we give some functors. In Section \ref{sec:lathom} we characterize closed epimorphisms between distributive lattices. 
In section \ref{perfect} we recall perfect MV-algebras and perfect ideals. In Section \ref{closed} we study $\V(C)$-algebras and in 
Section \ref{VC} we study spectra of free MV-algebras in $\V(C)$. In section \ref{komori} we study  the more general case of the Komori varieties $\V(K_m)$. 
In section  \ref{group} we consider  Spectra of  $\ell$-groups.   In section  \ref{geo}  we describe  zerosets   of functions from $\Delta(\mathbb R)^n$ to $\Delta(\mathbb R)$ which can be interpreted as the analogous of McNaughton functions  in $\V(C)$. In section \ref{sec:hom} we perform a study of homogeneous McNaughton functions. In  section \ref{mod} we extend Mundici's ideas about mod/theor connection. In Section \ref{conclusio} we draw some conclusions. 

\section{Preliminaries}\label{sect:prelim}

Recall that an MV-algebra (see e.g. \cite{CDM}) is a structure $(A, \oplus, 0, \neg )$ such that $(A, \oplus, 0)$ is a monoid (necessarily commutative), $\neg \neg x = x$, $x\oplus\neg 0 = \neg 0$, $\neg (\neg x \oplus y) \oplus y = \neg (\neg y \oplus x) \oplus x$.

Every MV-algebra is a lattice where $x\vee y = \neg (\neg x\oplus y)\oplus y$ and $x\wedge y = \neg (\neg x \vee \neg y)$.

In this order $0$ is the minimum and $1=\neg 0$ is the maximum. 

An ideal of an MV-algebra $A$ is a nonempty set $I \subseteq A$ closed under $\oplus$ and such that if $x\in I$ and $y\leq x$, then $y\in I$.

An ideal $P$ of $A$ is prime if $P\not=A$ and $x\wedge y\in P$ implies $x\in P$ or $y\in P$. 

The prime spectrum of an MV-algebra $A$ is the set $Spec(A)$ of the prime ideals of $A$. $Spec(A)$ has the Zariski topology, generated by the open sets $O(a)=\{P\in Spec(A)|a\notin P\}$, where $a\in A$. The complement of $O(a)$ is  $V(a)=\{P\in Spec(A)|a\in P\}$, and more generally for a set $I\subseteq A$ we let $V(I)=\{P\in Spec(A)|I\subseteq  P\}$. 

If $A$ is an MV-algebra and $S\subseteq A$, we denote by $ideal(S)$ the ideal generated by $S$. 

Since axioms for MV-algebras are equational, MV-algebras form a variety, and we have free objects. The free MV algebra over $k$ elements (with $k$ possibly infinite cardinal) $Free(k)$ is given by the McNaughton functions from $[0,1]^k$ to $[0,1]$, that is, the continuous piecewise linear functions with integer coefficients. We denote by $Z(f)$ the zeroset of the function $f$. 

As usual in universal algebra, every MV-algebra is an epimorphic image of a free MV-algebra. Since kernels of homomorphisms of MV-algebras are ideals, every MV-algebra is a quotient of a free MV-algebra modulo an ideal. 

Another key notion of this paper is that of $\ell$-group, or lattice ordered abelian group. That is, we have an abelian group $(G+,0,-)$ together with a lattice order $\leq $ such that $x\leq y$ implies $x+z\leq y+z$. $\ell$-groups are regarded as algebraic structures over the signature $(+,0,-,\wedge,\vee)$. An $\ell$-polynomial is a polynomial written in this signature. $\ell$-groups are also a variety. 

A strong unit of an $\ell$-group $G$ is an element $u$ of $G$ such that for every $x\in G$ there is $n$ with $x\leq nu$. 

We  recall the Mundici functor $\Gamma$, the functor $\Delta$ and  Komori varieties of MV-algebras.

$\Gamma$ is a functor from the category of $\ell$-groups with strong unit to the category of MV-algebras. 

Let $(G, u)$ be an $\ell$-group equipped with a fixed strong unit $u$. Then  the
MV-algebra $\Gamma(G, u) = ([0, u], \oplus,\neg, 0)$ is
defined as follows:
\begin{itemize}
\item $[0, u] = \{a \in G : 0 \leq a \leq  u\}$

\item $a\oplus  b = (a + b) \wedge  u$

\item $\neg a = u - a$

\item $0$  is the additive identity of $G$.
\end{itemize}

For morphisms, given $f:(G,u)\to (H,v)$ a morphism of $\ell$-groups with strong unit, $f$ restricts to a  MV algebra morphism $\Gamma(f):\Gamma(G,u)\to\Gamma(H,v)$. 

It results that $\Gamma$ is an equivalence between the category of $\ell$-groups with strong unit and MV-algebras. 

Now if $G$ is an $\ell$-group, we let 

$$\Delta(G)=\Gamma(\Z\ lex\ G,(1,0)),$$ 

where lex is the lexicographic product of groups. Given an $\ell$ group homomorphism $f:G\to H$, we have a group homomorphism $i:Z\ lex\ G\to Z\ lex\ H$ given by $i(z,g)=(z,f(g))$ and note $i(1,0)=(1,0)$, and $(1,0)$ is a strong unit in both  $\ell$-groups $\Z\ lex\ G$ and $\Z\ lex\ H$, so that we can apply the $\Gamma$ functor to $(\Z\ lex\ G,(1,0))$.

Also $\Delta$ is an equivalence, between the category of $\ell$-groups and the category of perfect MV-algebras, which are those MV-algebras which are generated by their infinitesimals.  

 Let $m\in\mathbb N$. We denote by ${\L}_m= \Gamma(\mathbb Z, m)$ the unique $m+1$ element {\L}ukasiewicz finite chain, and 
 $K_m=\Gamma(\mathbb Z\ lex\, \mathbb Z, (m,0))$ the $m$-th Komori chain.

We denote the variety of all MV-algebras by $MV$. If $\emptyset\not= X\subseteq MV$, then $\V(X)$
is the subvariety of $MV$ generated by $X$. 

Komori in \cite{K} proved that every
proper subvariety $\V$ of  the variety of MV-algebras  is of the form
$$(*) \quad \V = \V({\L}_{m_1},\dots, {\L}_{m_r}
, K_{t_1},\dots,  K_{t_s})$$
for some finite sets $I = \{m_1, \dots , m_r\}$ and $J = \{t_1, \dots , t_s\}$, not both empty.

Recall that a topological space $X$ is called generalized spectral if it is
sober (i.e., every irreducible closed set is the closure of a unique singleton) and
the collection of all compact open subsets of X forms a basis of the topology
of X, closed under intersections of any two members. If, in addition, X is
compact, we say that it is spectral.
 
\section{Spectra of MV-algebras}\label{Spectra}

Belluce  proved that the spectrum of an MV-algebra is spectral  (see \cite{B,BDS})-another proof of this fact is also contained in \cite[Corollary 8.9]{DP}, nevertheless  for  the sake of completeness we here give a complete proof of this classical result for free MV-algebras, see Theorem \ref{thm:kospec} (for arbitrary MV-algebras it will be a consequence).

\begin{lemma}\label{lemm}  If $I_\alpha$ is any collection of ideals, then
$\cap_\alpha V (I_\alpha)=V(\oplus_\alpha  I_\alpha)$\end{lemma}

\begin{proof} Observe that
$\oplus_\alpha  I_\alpha$ is the smallest ideal containing every $I_\alpha$. 

If $P\in V (\oplus I_\alpha)$,
since   $I_\alpha\subseteq  \oplus_\alpha  I_\alpha \subseteq P$, we can conclude that 
$P\in\cap V (I_\alpha)$.

To prove the other direction, suppose $P\in\cap V (I_\alpha)$. 

Hence  for any $\alpha$ the ideal $I_\alpha$ is contained in $P$. Then 
$I_\alpha\subseteq \oplus I_\alpha\subseteq P$. Thus,
$P\in V(\oplus I_\alpha)$.\end{proof} 

\begin{proposition}\label{pr}  Let $A$ be an MV-algebra. Then $Spec(A)$ is sober.\end{proposition}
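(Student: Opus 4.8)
The plan is to show that every irreducible closed subset of $Spec(A)$ is the closure of a unique point. Recall that closed sets in $Spec(A)$ are exactly the sets of the form $V(I)$ for ideals $I$ of $A$: indeed every closed set is an intersection of sets $V(a)$, and by Lemma \ref{lemm} such an intersection is $V(\oplus_\alpha ideal(a_\alpha))$. Moreover we may always take $I$ to be the intersection $\bigcap_{P\in V(I)}P$, which is the largest ideal with the given zero set $V(I)$; call such an ideal \emph{closed}, and note $V(I)=V(J)$ for closed $I,J$ implies $I=J$. Finally, the closure of a point $P\in Spec(A)$ is $\overline{\{P\}}=V(P)$.

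First I would establish the correspondence between irreducible closed sets and prime ideals. The key step is: \emph{$V(I)$ is irreducible if and only if the closed ideal $\bigcap_{P\in V(I)}P$ is prime.} For the easier direction, if $I$ is prime then $V(I)=\overline{\{I\}}$ is the closure of a point, hence irreducible. For the converse, suppose $I$ is closed but not prime; then either $I=A$, in which case $V(I)=\emptyset$ is not irreducible, or there exist $a,b\notin I$ with $a\wedge b\in I$. I claim $V(I)=V(ideal(I\cup\{a\}))\cup V(ideal(I\cup\{b\}))$, a union of two proper closed subsets, contradicting irreducibility. The inclusion $\supseteq$ is clear. For $\subseteq$: if $P\in V(I)$ then $a\wedge b\in I\subseteq P$, so since $P$ is prime $a\in P$ or $b\in P$, whence $I\cup\{a\}\subseteq P$ or $I\cup\{b\}\subseteq P$, giving $P\in V(ideal(I\cup\{a\}))$ or $P\in V(ideal(I\cup\{b\}))$. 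Properness of the two subsets follows since $a\notin I=\bigcap_{P\in V(I)}P$ means some $P\in V(I)$ has $a\notin P$, so $P\notin V(ideal(I\cup\{a\}))$, and symmetrically for $b$.

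Given this, soberness follows: an irreducible closed set $C=V(I)$ with $I$ closed has $I$ prime by the above, so $C=V(I)=\overline{\{I\}}$ is the closure of the point $I\in Spec(A)$. For uniqueness, if $\overline{\{P\}}=\overline{\{Q\}}$ then $V(P)=V(Q)$ with $P,Q$ both closed (a prime ideal equals the intersection of the primes containing it, namely itself), so $P=Q$.

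The main obstacle is verifying cleanly that a prime ideal is itself a closed ideal, i.e. $P=\bigcap_{Q\in V(P)}Q$, and more generally pinning down that $V(\cdot)$ is injective on closed ideals; this rests on the standard fact that every ideal contained in no prime would have to be $A$, equivalently that $A/I$ embeds into a product of chains, which is the MV-algebraic analogue of the subdirect representation theorem. Once this bookkeeping about closed ideals is in place, the irreducibility argument above is the substantive content and is short. I would also take care that the degenerate cases ($I=A$, or $V(I)=\emptyset$) are handled, since $\emptyset$ is vacuously not irreducible by the usual convention.
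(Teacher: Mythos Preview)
Your argument is correct, and it takes a genuinely different route from the paper. The paper's proof writes the radical ideal $I$ as an intersection of a family $Y$ of primes and argues that if $|Y|>1$ one can split $Y=Y'\cup Y''$ into two proper subfamilies, obtaining a decomposition $V(I)=V(\bigcap Y')\cup V(\bigcap Y'')$ that contradicts irreducibility. Your approach instead works at the level of elements: assuming the closed ideal $I$ is not prime, you pick $a,b\notin I$ with $a\wedge b\in I$ and decompose $V(I)=V(I\cup\{a\})\cup V(I\cup\{b\})$, using primeness of each $P\in V(I)$ for the inclusion and closedness of $I$ for properness. This is the standard commutative-algebra style argument, and it has two advantages over the paper's: it makes the properness of the two pieces transparent (the paper asserts but does not justify that $F',F''$ are proper subsets, which can fail for an arbitrary splitting of $Y$ when the primes in $Y$ are comparable), and it explicitly handles uniqueness of the generic point, which the paper's proof omits entirely. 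Your worry about showing that a prime $P$ is ``closed'' is unnecessary, though: since $P\in V(P)$, the inclusion $\bigcap_{Q\in V(P)}Q\subseteq P$ is immediate, so $P=\bigcap_{Q\in V(P)}Q$ holds trivially without invoking subdirect representation.
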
 
\begin{proof} Let  $F = V (I)$   be an irreducible  closed subset of $Spec(A)$. 

 We may suppose that $I$ is the intersection of  a family $Y$ of prime ideals. If  $|Y | > 1$, 
we may write $Y = Y'\cup  Y''$  where $Y'\not=Y$ and $Y''\not=Y$
 Then  $F' = V (\cap_{P\in Y'}  P)$ and $F'' = V (\cap_{P\in Y''}  P)$ are proper closed subsets of $F$ and $F=F'\cup F''$, since $I=(\cap_{P\in Y'}  P)\cap (\cap_{P\in Y''} P)$ and this is a
contradiction, since $F$ is irreducible. So $|Y|=1$ and $F=V(P)$.\end{proof} 

 Let $A$ be an MV-algebra and  $f\in A$.  Recall  $$O(f)=\{P\;\text{prime ideal of} \; A \;\text{s.t }\; f\notin P\}$$ 

\begin{lemma}\label{propo} Let $f,\,g\in A$, then $O({f\oplus g})= O(f)\cup O(g)$\end{lemma}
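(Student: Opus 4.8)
The plan is to pass to complements and reduce the identity to a statement about a single prime ideal. Since $O(h) = Spec(A) \setminus V(h)$ for every $h \in A$, the claimed equality $O(f \oplus g) = O(f) \cup O(g)$ is equivalent to $V(f \oplus g) = V(f) \cap V(g)$, i.e.\ to the assertion that for every prime ideal $P$ of $A$ one has $f \oplus g \in P$ if and only if both $f \in P$ and $g \in P$. (In fact primality plays no role here, so the identity would hold for arbitrary ideals; only membership and downward closure are used.)

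For the inclusion $V(f) \cap V(g) \subseteq V(f \oplus g)$ I would simply use that an ideal is closed under $\oplus$: if $f \in P$ and $g \in P$, then $f \oplus g \in P$. For the reverse inclusion $V(f \oplus g) \subseteq V(f) \cap V(g)$ I would invoke the elementary inequalities $f \leq f \oplus g$ and $g \leq f \oplus g$, valid in every MV-algebra: indeed $\neg f \oplus (f \oplus g) = (\neg f \oplus f) \oplus g = 1 \oplus g = 1$, which says $f \leq f \oplus g$ by the standard characterization of the order, and symmetrically for $g$ since $\oplus$ is commutative. Then downward closure of $P$ forces $f \in P$ and $g \in P$ whenever $f \oplus g \in P$. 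Translating back through complements: if $P \in O(f) \cup O(g)$, say $f \notin P$, then $f \oplus g \notin P$ (else $f \leq f \oplus g$ and downward closure would give $f \in P$), so $P \in O(f \oplus g)$; conversely, if $P \in O(f \oplus g)$, then $f$ and $g$ cannot both lie in $P$, for otherwise $f \oplus g \in P$, hence $P \in O(f) \cup O(g)$.

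I do not expect any genuine obstacle in this statement; it is a direct unwinding of the definitions of $O(\cdot)$, of an ideal, and of the MV-algebra order. The only point worth recording explicitly is the basic lemma $x \leq x \oplus y$, which we recall from the elementary theory of MV-algebras (see e.g.\ \cite{CDM}); everything else is immediate.
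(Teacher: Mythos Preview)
Your proof is correct and follows essentially the same approach as the paper's own proof: both directions rest on the two defining properties of an ideal, namely downward closure (together with $f \leq f \oplus g$) and closure under $\oplus$. You simply spell out the passage to complements and the justification of $f \leq f \oplus g$ in more detail than the paper does.
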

\begin{proof} Since ideals are closed downwards, we have $O(f),O(g)\subseteq O(f\oplus g)$. For the converse inclusion, since ideals are closed under sum, if $P$ is a prime ideal with $f \oplus g \notin P$, then  $f \in  P$ and  $g\in P$ cannot occur together.
\end{proof} 

\begin{proposition}\label{pro} Let $f\in A$, then  $O(f)$ is compact.
\end{proposition}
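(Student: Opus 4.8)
The plan is to combine the two preceding lemmas with the fact that MV-algebras have enough prime ideals. First I would observe that $O(f)\cap O(g)=O(f\wedge g)$: the inclusions $f\wedge g\le f$, $f\wedge g\le g$ and downward closure of ideals give one inclusion, while primeness of ideals gives the other. Hence the family $\{O(g):g\in A\}$ is closed under finite intersections and is a basis of the Zariski topology, so it suffices to extract a finite subcover from any cover of $O(f)$ by basic open sets $O(g_\alpha)$, $\alpha\in\Lambda$.

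Next, assuming $O(f)\subseteq\bigcup_{\alpha\in\Lambda}O(g_\alpha)$, I would pass to complements to obtain $\bigcap_{\alpha}V(g_\alpha)\subseteq V(f)$. Writing $I_\alpha=ideal(\{g_\alpha\})$, one has $V(g_\alpha)=V(I_\alpha)$, and Lemma \ref{lemm} yields $\bigcap_\alpha V(I_\alpha)=V(\oplus_\alpha I_\alpha)=V(J)$, where $J:=ideal(\{g_\alpha:\alpha\in\Lambda\})$ is the smallest ideal containing every $g_\alpha$. So every prime ideal of $A$ containing $J$ contains $f$.

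The crux is then to conclude $f\in J$. If $f\notin J$, the poset of ideals containing $J$ but missing $f$ is nonempty and inductive, so Zorn's lemma produces a maximal such ideal $P$, and a routine argument shows that $P$ is prime; but then $P\in V(J)\setminus V(f)$, contradicting the previous paragraph. Hence $f\in J$, and by the standard description of the generated ideal there are finitely many indices $\alpha_1,\dots,\alpha_n$ with $f\le g_{\alpha_1}\oplus\cdots\oplus g_{\alpha_n}$. Since ideals are downward closed, $O(f)\subseteq O(g_{\alpha_1}\oplus\cdots\oplus g_{\alpha_n})$, and iterating Lemma \ref{propo} rewrites the right-hand side as $O(g_{\alpha_1})\cup\cdots\cup O(g_{\alpha_n})$. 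This is the desired finite subcover, so $O(f)$ is compact.

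The main obstacle is the step $f\in J$: it relies on the MV-algebraic prime ideal theorem (any ideal avoiding a given element is contained in a prime ideal still avoiding it), equivalently on the fact that a proper ideal equals the intersection of the prime ideals above it. Everything else is formal manipulation with Lemmas \ref{lemm} and \ref{propo}, the basis property of $\{O(g):g\in A\}$, and the elementary description of $ideal(S)$.
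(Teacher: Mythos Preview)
Your proof is correct and follows essentially the same route as the paper: pass to complements, use Lemma~\ref{lemm} to rewrite the intersection as $V(J)$ for the ideal $J$ generated by the $g_\alpha$, deduce $f\in J$, and then apply Lemma~\ref{propo} to extract a finite subcover. The only difference is that you make explicit two points the paper leaves implicit: the preliminary reduction to covers by basic opens, and---more importantly---the appeal to the MV-algebraic prime ideal theorem to pass from $V(J)\subseteq V(f)$ to $f\in J$; the paper simply writes ``Hence $f=\oplus_{n=1}^k f_n$'' without justification.
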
 
\begin{proof}  Let  $$O(f)\subseteq \cup O{(f_i) }$$ be an open cover of  $O(f)$ . Then  $$V (f) =
A \setminus O(f) =A \setminus  \cup O{(f_i) } =\cap V(f_i)= V (I)$$ where $I$ is the ideal generated by  $f_i$. 
Hence $f=\oplus _{n=1}^k f_n$ and  by Lemma \ref{propo}
$$O(f)\subseteq  \cup_{n=1}^k O({f_n})$$
\end{proof} 

Conversely we have: 

\begin{proposition} If an open $O(I)$ is compact, then $O(I)=O(f)$ for some $f$. 
\end{proposition}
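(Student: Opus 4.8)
The plan is to show that if $O(I)$ is compact then the ideal $I$ is "finitely generated up to radical'' in the weak sense needed, and then to replace the finitely many generators by a single element using Lemma~\ref{propo}. First I would write $O(I) = \bigcup_{a \in I} O(a)$: indeed $P \in O(I)$ means $I \not\subseteq P$, i.e. there is $a \in I$ with $a \notin P$, i.e. $P \in O(a)$; conversely each $O(a)$ with $a \in I$ is contained in $O(I)$ since $a \in I \subseteq P$ would be forced if $P \supseteq I$. So $\{O(a) : a \in I\}$ is an open cover of $O(I)$, and by compactness there are $a_1, \dots, a_k \in I$ with $O(I) = \bigcup_{n=1}^k O(a_n)$.

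Next I would set $f = a_1 \oplus \cdots \oplus a_k$. Since $I$ is an ideal, it is closed under $\oplus$, so $f \in I$. By Lemma~\ref{propo} (applied inductively), $O(f) = O(a_1) \oplus \cdots$ — more precisely $O(a_1 \oplus \cdots \oplus a_k) = \bigcup_{n=1}^k O(a_n) = O(I)$. That already gives $O(I) = O(f)$ for this single $f \in I$, which is exactly the claim.

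The one subtlety worth checking — and what I expect to be the only real content beyond bookkeeping — is that the two descriptions of the open sets genuinely agree: that the basic opens $O(a)$, $a \in A$, together with the opens $O(I)$, $I$ an ideal, generate the same topology, and that $O(I) = \bigcup_{a\in I} O(a)$. This is immediate from the definitions of the Zariski topology and of ideals (downward closure gives one inclusion, and $a \in I \subseteq P \Rightarrow a \in P$ gives the other), so there is no genuine obstacle; the proof is essentially a two-line argument once the cover $\{O(a): a\in I\}$ is produced and Lemma~\ref{propo} is invoked. I would simply remark that this, combined with Proposition~\ref{pro}, shows the compact opens of $Spec(A)$ are exactly the sets $O(f)$, $f \in A$, which is the fact used in the sequel.
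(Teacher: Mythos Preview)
Your proof is correct and follows essentially the same route as the paper: cover $O(I)$ by the basic opens $O(a)$ for $a\in I$, extract a finite subcover by compactness, and take $f$ to be the $\oplus$-sum of the finitely many elements, invoking Lemma~\ref{propo}. You supply more justification than the paper does (which simply asserts the covering and the extraction), so nothing is missing.
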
 

\begin{proof} Suppose $O(I)$ is compact. Then we have the covering $O(I)=\bigcup_{i\in I}O(i)$ and we can extract a finite subset $F$ of $I$ such that $O(I)=\bigcup_{g\in F}O(g)$. Now take $f=\oplus_{g\in F}g$.   
\end{proof} 

\begin{proposition}\label{prof}  Let  $A=Free(k)$ be  a free MV-algebra over $k$ generators. Let $f\in A$ and $Z(f)$ be the zeroset of $f$ in $[0,1]^k$.
Let $\phi$ be the map from the lattice of compact open sets of $Spec(A)$ to the lattice of zerosets of McNaughton functions in $Free(k)$, such that 
$\phi:O(f)\mapsto Z(f)$. Then $\phi$ is well-defined, i.e. it does not depend on the choice of $f$, and it is a lattice isomorphism with respect the inclusion.\end{proposition}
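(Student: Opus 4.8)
The plan is to check, in turn, that $\phi$ is well defined, surjective, injective, and compatible with the two lattice structures, the crux being a Nullstellensatz-type comparison between inclusion of zerosets and divisibility in $A$.

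\emph{Well-definedness and surjectivity.} For each $x\in[0,1]^k$ the set $M_x=\{h\in A:h(x)=0\}$ is a prime (in fact maximal) ideal, since evaluation at $x$ is an MV-homomorphism onto a subalgebra of the chain $[0,1]$; moreover $h\in M_x$ iff $x\in Z(h)$, so $M_x\in O(h)$ iff $x\notin Z(h)$. Hence if $O(f)=O(g)$ then $f$ and $g$ belong to exactly the same ideals $M_x$, which forces $Z(f)=Z(g)$; thus $\phi$ is independent of the chosen representative. Surjectivity is immediate, since every zeroset of a McNaughton function is some $Z(f)=\phi(O(f))$.

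\emph{Compatibility with the operations.} By Lemma \ref{propo} we have $O(f)\cup O(g)=O(f\oplus g)$, and dually, using primeness, $O(f)\cap O(g)=O(f\wedge g)$. A direct computation with the truncated sum and with $\min$ gives $Z(f\oplus g)=Z(f)\cap Z(g)$ and $Z(f\wedge g)=Z(f)\cup Z(g)$. Thus $\phi$ sends joins and meets of compact opens to intersections and unions of the corresponding zerosets; together with the order equivalence proved next, this exhibits $\phi$ as a lattice isomorphism onto the lattice of zerosets.

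\emph{Injectivity and the order equivalence (main obstacle).} I would isolate the geometric lemma: for McNaughton functions $f,g$, one has $Z(f)\subseteq Z(g)$ if and only if $g\le nf$ for some $n\in\mathbb N$ (with $nf$ the $n$-fold $\oplus$-multiple). One direction is trivial, as $Z(nf)=Z(f)$. For the other, $f$ and $g$ depend on only finitely many coordinates, so after restricting to a finite-dimensional cube one invokes compactness of $[0,1]^m$ together with piecewise linearity: off $Z(f)$ the function $f$ is bounded below, while near $Z(f)$ the integer slopes of $f$ dominate those of $g$, yielding a uniform $n$. Granting the lemma, $Z(f)\subseteq Z(g)$ implies $g\in ideal(f)$, so every prime containing $f$ contains $g$, i.e. $V(f)\subseteq V(g)$, equivalently $O(g)\subseteq O(f)$; the symmetric argument gives the converse. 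Hence $O(f)\subseteq O(g)$ iff $Z(g)\subseteq Z(f)$, and in particular $Z(f)=Z(g)$ implies $O(f)=O(g)$, which is injectivity.

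Putting the pieces together, $\phi$ is a bijection compatible with the lattice operations and with the order equivalence just established, and is therefore the asserted lattice isomorphism with respect to inclusion. I expect the geometric lemma, and in particular its reduction from arbitrary (possibly infinite) $k$ to the finite-dimensional compact case, to be the only genuinely delicate step; the point-evaluation ideals $M_x$ and Lemma \ref{propo} make the remaining verifications routine.
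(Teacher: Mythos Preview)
Your proof is correct and follows essentially the same route as the paper's: both arguments pivot on the equivalence between $Z(f)\subseteq Z(g)$ and $g\in ideal(f)$. The paper simply cites this as the W\'ojcicki theorem (referencing \cite{MS}), whereas you sketch its proof via reduction to finitely many coordinates and a compactness/piecewise-linearity argument. For the other half of the order equivalence, the paper invokes the fact that a principal ideal equals the intersection of the primes containing it to get $O(f)\subseteq O(g)\Leftrightarrow f\in ideal(g)$ directly; you instead run the implication through $V(f)\subseteq V(g)$ and test against the point ideals $M_x$, which is a minor variation yielding the same conclusion. Your separate treatment of well-definedness via the $M_x$ is not in the paper (there it falls out of the order equivalence), but it is harmless and correct.
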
 
\begin{proof} Let $f,\,g$ be two McNaughton functions. It is known that the principal  ideal of $f$ is the intersection of all prime ideals containing $f$. So, $O(f)$ is included in $O(g)$ if and only if  $f$ is in the principal ideal of $g$. Moreover, $Z(f)$ is included in $Z(g)$ if and only if $g$ is in the principal ideal  of $f$ (Wojcicki theorem, see \cite{MS})  \end{proof} 

Recall that a rational simplex in $[0, 1]^X$, where $X$ is a finite set, is the convex
envelope of finitely many, affinely independent rational points. A rational
polyhedron is a finite union of rational simplexes.

The results of \cite{DL} show the relevance of theories in infinitely many variables, possibly finitely axiomatized. 
In order to treat with infinite dimensional objects, we give the following definition taken from \cite{DL}:
\begin{definition} Let $X$ be a set, possibly infinite. We define a  cylinder
(rational) polyhedron in a hypercube $[0, 1]^X$ a subset of $[0, 1]^X$ of the form
$$C_X(P_0) = \{f\in [0, 1]^X |\ f|_Y\in P_0\}$$
where $Y$ is a finite subset of X and $P_0 \subseteq [0, 1]^Y$ is a rational polyhedron.
Here $f|_Y$ denotes the function f restricted to Y , that is, $f|_Y = f \circ j$, where
$j : Y\rightarrow X$ is the inclusion map.\end{definition}

Like rational polyhedra are zerosets of McNaughton functions in finite dimension, cylinder rational polyhedra are the zerosets of McNaughton functions in possibly infinite dimension. 

Now we can give the following properties of spectra of free MV-algebras. 

\begin{theorem}\label{thm:kospec} Let $A=Free(k)$  be a free MV-algebra over $k$ generators. Then $Spec(A)$ is spectral. Moreover, there is a lattice isomorphism between  the lattice $\overset{\circ}{K}(Spec(A))$ of  the  compact open subsets  of $Spec(A)$ and  the lattice of the cylinder rational polyhedra of 
$[0,1]^k$.
\end{theorem}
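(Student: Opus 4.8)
The plan is to assemble the facts already proved and to add the one classical ingredient about polyhedra.

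\emph{Spectrality.} Soberness of $Spec(A)$ is exactly Proposition~\ref{pr} (which holds for any MV-algebra). By definition of the Zariski topology the sets $O(f)$, $f\in A$, form a basis; each is compact by Proposition~\ref{pro}; and $O(f)\cap O(g)=O(f\wedge g)$, since a prime ideal $P$ contains $f\wedge g$ iff it contains $f$ or $g$. Combining Lemma~\ref{propo} with compactness, every compact open set, being a union of basic opens, is already a \emph{finite} such union $O(f_1)\cup\dots\cup O(f_n)=O(f_1\oplus\dots\oplus f_n)$; hence the compact opens are precisely the sets $O(f)$, and in particular they form a basis closed under finite intersections. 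Finally $Spec(A)=O(1)$, because a prime ideal is proper and therefore omits $1$, and $O(1)$ is compact by Proposition~\ref{pro}. Thus $Spec(A)$ is spectral.

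\emph{The lattice isomorphism.} Proposition~\ref{prof} already gives a lattice isomorphism $O(f)\mapsto Z(f)$ between $\overset{\circ}{K}(Spec(A))$ and the lattice of zerosets of the McNaughton functions of $Free(k)$, so it remains only to identify this last lattice with the lattice of cylinder rational polyhedra of $[0,1]^k$. For $k$ finite this is the classical equivalence between rational polyhedra and zerosets of McNaughton functions. For $k$ infinite, every $g\in Free(k)$ involves only finitely many generators, say $g=g_0\circ\pi_Y$ with $Y\subseteq k$ finite, $g_0\in Free(Y)$, and $\pi_Y\colon[0,1]^k\to[0,1]^Y$ the projection; then $Z(g)=\pi_Y^{-1}(Z(g_0))=C_k(Z(g_0))$ with $Z(g_0)$ a rational polyhedron by the finite case, so $Z(g)$ is a cylinder rational polyhedron. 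Conversely, any cylinder rational polyhedron $C_k(P_0)$ with $P_0\subseteq[0,1]^Y$ a rational polyhedron equals $Z(g_0\circ\pi_Y)$ once $P_0$ is written as $Z(g_0)$ for some $g_0\in Free(Y)\subseteq Free(k)$. Hence the zerosets of the McNaughton functions of $Free(k)$ are exactly the cylinder rational polyhedra of $[0,1]^k$, and as subsets of $[0,1]^k$ they carry one and the same lattice structure; composing with the isomorphism of Proposition~\ref{prof} finishes the proof.

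The only input that is not bookkeeping is the classical finite-dimensional correspondence between rational polyhedra and zerosets of McNaughton functions. I expect the step needing the most care to be the passage from the finite to the infinite case, which rests on the fact that each element of a free MV-algebra involves only finitely many generators; granting that, matching the lattice structures is routine.
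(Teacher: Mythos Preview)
Your proof is correct and follows essentially the same strategy as the paper: assemble sobriety (Proposition~\ref{pr}), the compact-open basis (Proposition~\ref{pro}), and the isomorphism of Proposition~\ref{prof}. Two small differences worth noting: for compactness you observe $Spec(A)=O(1)$ and invoke Proposition~\ref{pro}, which is a bit slicker than the paper's direct argument via Lemma~\ref{lemm}; and you spell out explicitly why zerosets of McNaughton functions coincide with cylinder rational polyhedra (reducing to the finite case via the finitely-many-generators observation), whereas the paper simply states this identification in the text preceding the theorem and invokes Proposition~\ref{prof} without further comment.
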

\begin{proof} (i) We will prove that $Spec(A)$ is compact.

If $I_\alpha$ is a collection of ideals of $A$ where
$\cap V (I_\alpha) = \emptyset=V (A)$,
then by Lemma \ref{lemm} $$\oplus I_\alpha=A$$

Since $1\in A$, we have  $1=i_{\alpha_1}\oplus i_{\alpha_2}\oplus \dots\oplus i_{\alpha_k}$, where $i_{\alpha_i}\in I_{\alpha_i}$,  for $i=1,\dots, k$. From this $1\in\oplus I_{\alpha_i}$, or equivalently

   $$V(\oplus_{i=1}^k I_{\alpha_i})=\cap_{i=1}^k  V (I_{\alpha_i}) = V (A)=\emptyset$$ which concludes the proof.
   
    (ii) It has a basis of open compact subsets: See Proposition \ref{pro}. 
   
   (iii) From (i)-(ii) and Proposition \ref{pr} we can conclude that $Spec(A)$ is spectral.
   
   (iv) The last assertion derives from Proposition \ref{prof} 
   \end{proof}  \vskip5mm \vskip5mm
 
In order to treat quotient maps we make use of what follows:

\begin{notation} Let $I$ be an ideal of $A$, then $h_I$ denotes the canonical  correspondence from $A$ to $A/I$ such that $x\mapsto  x/I$. \end{notation}

  \begin{lemma}  (i) Let $h:A\rightarrow B$ be a   surjective homomorphism between two MV-algebras and $I$ be an ideal, then $h(I)$ is an ideal.
 
 (ii) Let $h:A\rightarrow B$ be a homomorphism between two MV-algebras and $P'$ be a prime ideal of $B$, then $h^{-1}(P')$ is a prime ideal of $A$. 
 
 (iii) Let $h$ be surjective,  $P$ be a prime ideal of $A$ which contains $Ker h$, then $h(P)$ is a prime ideal of $B$.
 \end{lemma}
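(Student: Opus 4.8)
The plan is to prove each of the three parts by a direct, elementary argument, translating the defining properties of ideals and prime ideals of MV-algebras across the homomorphism $h$.

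For part (i), I would show that $h(I)$ is nonempty (it contains $h(0)=0$), closed under $\oplus$ (since $h(x)\oplus h(y)=h(x\oplus y)$ and $x\oplus y\in I$), and downward closed: given $h(x)\in h(I)$ with $x\in I$ and any $b\le h(x)$ in $B$, I must produce $a\in I$ with $h(a)=b$. By surjectivity pick $a_0$ with $h(a_0)=b$; then $a_0\wedge x\in I$ since $a_0\wedge x\le x$, and $h(a_0\wedge x)=h(a_0)\wedge h(x)=b\wedge h(x)=b$ because $b\le h(x)$. So $b\in h(I)$, and $h(I)$ is an ideal.

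For part (ii), I would check $h^{-1}(P')$ is a proper ideal: it is nonempty ($0$ is in it), closed under $\oplus$ (if $h(x),h(y)\in P'$ then $h(x\oplus y)=h(x)\oplus h(y)\in P'$), downward closed (if $x\in h^{-1}(P')$ and $y\le x$ then $h(y)\le h(x)\in P'$ so $h(y)\in P'$), and proper (since $h(1)=1\notin P'$, as $P'\ne B$). For primality: if $x\wedge y\in h^{-1}(P')$ then $h(x)\wedge h(y)=h(x\wedge y)\in P'$, so $h(x)\in P'$ or $h(y)\in P'$, i.e. $x\in h^{-1}(P')$ or $y\in h^{-1}(P')$. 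This part uses no surjectivity.

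For part (iii), since $h$ is surjective and $P$ is an ideal, part (i) already gives that $h(P)$ is an ideal; I only need primality and properness. Properness: $h(P)\ne B$ because if $1\in h(P)$ then $1=h(p)$ for some $p\in P$, whence $\neg p\in\ker h\subseteq P$, so $1=p\oplus\neg p\in P$, contradicting $P\ne A$. Primality: suppose $b\wedge b'\in h(P)$, say $b\wedge b'=h(p)$ with $p\in P$; pick $a,a'$ with $h(a)=b$, $h(a')=b'$. Then $h(a\wedge a')=b\wedge b'=h(p)$, so $h((a\wedge a')\,\triangle\, p)=0$ in the appropriate sense — more carefully, $(a\wedge a')\ominus p$ and $p\ominus(a\wedge a')$ both lie in $\ker h\subseteq P$, and from $a\wedge a'\le p\oplus\big((a\wedge a')\ominus p\big)$ one gets $a\wedge a'\in P$. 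Since $P$ is prime, $a\in P$ or $a'\in P$, hence $b\in h(P)$ or $b'\in h(P)$. The hypothesis $\ker h\subseteq P$ is exactly what makes this step work, and keeping the MV-algebra truncation ($\oplus$, $\ominus$) straight in that computation — rather than reasoning as if in a group — is the only delicate point; everything else is a routine transport of axioms along $h$. I expect the main (minor) obstacle to be writing the $\ker h\subseteq P$ argument in part (iii) cleanly, since it is the one place where the quotient hypothesis is essential.
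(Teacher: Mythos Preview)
Your proof is correct and follows essentially the same approach as the paper: the downward-closure argument in (i) via $a_0\wedge x$, the primality check in (ii), and the key step in (iii) using $(a\wedge a')\ominus p\in\ker h\subseteq P$ together with $a\wedge a'\le p\oplus\big((a\wedge a')\ominus p\big)$ are exactly what the paper does. You add explicit properness checks in (ii) and (iii) that the paper omits, which is a harmless (indeed welcome) refinement.
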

 
  \begin{proof} (i) Obviously, $h(I)$ is closed under sum.
 
 We now prove closure downwards: If $x \in B,\, y \in  h(I)$ and $x\leq y$, then $x\in h(I)$. 

Since $y\in h(I)$, there exists $i\in I$ such that $y=h(i)$. Since $h$ is surjective, there exists $j\in A$ such that $x=h(j)$. 

From $h(j)\leq h(i)$, we derive  $x=h(j)=h(j\wedge i)$, where $j\wedge i\in I$, whence the thesis comes.
  
    (ii) Clearly $h^{-1}(P')$ is an ideal. We prove that the ideal $h^{-1}(P')$ is prime.

Suppose that  $a\wedge b\in h^{-1}(P')$ for $a, b\in A$. Then we have $h(a\wedge b) \in P'$.
Since $h$ is a homomorphism, we obtain
\begin{align*} h(a)\wedge h(b)=h(a\wedge b)\in P'.
\end{align*}

Since $P'$ is a prime ideal, it follows that either $h(a)\in P'$ or $h(b)\in P'$.
Hence we have either $a\in h^{-1}(P')$ or $b\in h^{-1}(P')$.
This proves that the ideal $h^{-1}(P')$ is prime.
  
  (iii) First observe that, since $h$ is a surjective homomorphism and $P$ is an ideal, we get that  $h(P)$  is an ideal. 
  
  We prove  now that the ideal $h(P)$ is prime. 
  
  Suppose  that  $r\wedge s \in h(P)$,  for $r,s\in B$.  By the surjectivity of $h$ we have $r=h(a),\, s=h(b)$ for some $a,\,b\in A$. 
Now $h(a)\wedge h(b)=h(c)$, for some $c\in  P$,  and this  becomes $(a\wedge b)\ominus  c\in ker h$. Since  $P\supseteq ker h$, we can conclude $(a\wedge b)\ominus c\in P$, but $a\wedge b\leq c\oplus ((a\wedge b)\ominus c)$ and the right hand side is in $P$, 
so also $a\wedge b\in P$. Since $P$ is prime,  either $a\in P$ or $b\in P$. It follows  either $r\in h(P)$ or $s\in h(P)$.  This proves that the ideal $h(P)$ is prime. \end{proof} 
 
\begin{proposition}\label{i}  (i)  Let $J$ be an ideal of an MV-algebra A. Then the map
$I \rightarrow  h_J (I)$ determines an inclusion preserving one-to-one correspondence between
the ideals of A containing J and the ideals of the quotient MV-algebra $A/J$. 

(ii) The inverse map also preserves inclusions.  

(iii)   $Spec(A/J)$ and $V(J)$, both endowed with the Zariski topology,  are  homeomorphic. 
\end{proposition}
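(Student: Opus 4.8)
The plan is to deduce all three parts from the preceding lemma together with elementary arithmetic of the quotient map $h_J$, which is surjective with kernel $J$.

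For (i), given an ideal $I\supseteq J$ the set $h_J(I)$ is an ideal of $A/J$ by part (i) of the preceding lemma, and given an ideal $K$ of $A/J$ the set $h_J^{-1}(K)$ is an ideal of $A$ that contains $h_J^{-1}(\{0\})=J$. Both assignments preserve inclusions, since images and preimages of maps always do; this already yields (ii) once we know the two maps are mutually inverse, the inverse of $I\mapsto h_J(I)$ being precisely $K\mapsto h_J^{-1}(K)$. The identity $h_J(h_J^{-1}(K))=K$ is immediate from surjectivity. For $h_J^{-1}(h_J(I))=I$ with $I\supseteq J$, the inclusion $\supseteq$ is trivial, and for $\subseteq$ note that if $h_J(x)=h_J(i)$ with $i\in I$ then $x\ominus i\in J$, so from $x\le i\oplus(x\ominus i)$ together with $i\in I$ and $x\ominus i\in J\subseteq I$ we get $x\in I$.

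For (iii), I would first observe that the correspondence of (i) restricts to a bijection between $V(J)$ and $Spec(A/J)$: if $I\supseteq J$ is prime then $h_J(I)$ is prime by part (iii) of the lemma, and if $K$ is prime then $h_J^{-1}(K)$ is prime by part (ii) of the lemma. Write $\phi\colon V(J)\to Spec(A/J)$, $P\mapsto h_J(P)$, and $\psi\colon Spec(A/J)\to V(J)$, $P'\mapsto h_J^{-1}(P')$; by (i) these are mutually inverse. The one computation that matters is that, for $P\in V(J)$ and $a\in A$, one has $a/J\in h_J(P)$ if and only if $a\in P$: the forward direction follows because $a/J=p/J$ for some $p\in P$ gives $a\ominus p\in J\subseteq P$, hence $a\le p\oplus(a\ominus p)\in P$; the converse is clear. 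Using this, and that the sets $O(a)$ (resp. $O(a/J)$) form a basis of the respective topologies, we get $\psi^{-1}(O(a)\cap V(J))=\{P'\mid a\notin h_J^{-1}(P')\}=\{P'\mid a/J\notin P'\}=O(a/J)$ and $\phi^{-1}(O(a/J))=\{P\in V(J)\mid a/J\notin h_J(P)\}=\{P\in V(J)\mid a\notin P\}=O(a)\cap V(J)$, so both $\phi$ and $\psi$ are continuous, hence homeomorphisms. The only mildly delicate ingredients are the two MV-algebra facts used above — that $x/J=y/J$ forces $x\ominus y\in J$, and that $x\le y\oplus(x\ominus y)$ — but these are routine, so I do not expect a genuine obstacle here.
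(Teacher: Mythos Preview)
Your proof is correct and follows the standard correspondence-theorem route; the paper itself defers (i) and (ii) to \cite[Proposition~1.15]{M} and for (iii) simply observes that the primes of $A/J$ are exactly the $P/J$ for $P\in V(J)$, leaving the continuity verification implicit. Your version is more self-contained, working out the mutual-inverse identities and the basic-open computations explicitly via the preceding lemma, but the underlying argument is the same.
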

\begin{proof} (i), (ii)  The first two statements are contained in \cite[Proposition 1.15] {M}.

(iii) For the last statement: Since  the prime ideals  of $A/J$ are precisely the ideals $P/J$ for prime ideals $P\in V(J)$, the thesis follows.\end{proof}

The map $$h_*:Spec(G/J)\rightarrow Spec(F/I) $$ 
defined by $h_*(P):=h^{-1}(P)$, for every prime ideal $P$, is continuous.

Indeed,  
If $ V(T)$ is  some closed set of
$Spec(F/I)$, then $h_*^{-1}(V (T)) = V (h(T))$ and the latter set is also closed. 

Let us prove the last equality:

$$h_*^{-1}(V (T))=\{P\in Spec(G/J):  h_*(P)\in V(T)\}$$
$$=\{P\in Spec(G/J): T\subseteq h_*(P)\}=\{P\in Spec(G/J):  h(T)\subseteq P\}=$$
$$V (h(T)).$$

 \section{A  characterization of the spectrum of free MV-algebras}\label{Spectrum}

In this section we  characterize the spectra of free MV-algebras, in this way we are able to characterize the spectra of any MV-algebra, as the spectra of MV-algebras are exactly the closed subsets of the spectra of free MV-algebras (via the homeomorphism introduced in Proposition \ref{i}). 
In the next theorem we will characterize the spectra of free MV-algebras in purely topological means. To this aim we introduce an ad hoc class of topological spaces called the cylinder based spaces.
\begin{definition}
A topological space $X$ is called cylinder based if: 
\begin{enumerate}
\item $X$ is spectral;
\item the lattice $\overset{\circ}{K}(X)$ is anti isomorphic to the lattice $Cypol(k)$ of cylinder rational polyhedra of $[0,1]^k$ for some cardinal $k$, possibly infinite.
 \end{enumerate}\end{definition}

Now we can give the following characterization of spectra of free MV-algebras. 

\begin{theorem}\label{thm:fond}  A topological space $X$ is homeomorphic to $Spec(Free(k))$ for some $k$ if and only if it is cylinder based.
\end{theorem}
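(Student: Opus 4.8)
The plan is to prove the equivalence in two directions, leveraging Theorem \ref{thm:kospec}, which already gives the non-trivial content that $\overset{\circ}{K}(Spec(Free(k)))$ is lattice-isomorphic to $Cypol(k)$. For the forward direction, suppose $X \cong Spec(Free(k))$. By Theorem \ref{thm:kospec}, $Spec(Free(k))$ is spectral, so condition (1) of ``cylinder based'' holds. For condition (2), the subtlety is the discrepancy between ``isomorphic'' in Theorem \ref{thm:kospec} and ``anti-isomorphic'' in the definition: in the earlier proposition the lattice isomorphism $\phi$ sends $O(f)$ to $Z(f)$ and, on the polyhedral side, $O(f)\subseteq O(g)$ corresponds to $Z(g)\subseteq Z(f)$ by Wojcicki's theorem — so in fact one must be careful which of $Cypol(k)$ or its order-dual is meant in the two places. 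I would reconcile this by noting that $Cypol(k)$ ordered by inclusion and ordered by reverse inclusion are each a bounded distributive lattice, and that the statement of the Definition must be read with whichever orientation makes it consistent with Theorem \ref{thm:kospec}; concretely, $\overset{\circ}{K}(X)$ ordered by inclusion is anti-isomorphic to $Cypol(k)$ ordered by inclusion precisely because of the order-reversal in Wojcicki's theorem. This gives (2).

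For the converse, suppose $X$ is cylinder based, with $k$ the witnessing cardinal, so $\overset{\circ}{K}(X)$ is anti-isomorphic to $Cypol(k)$. Set $A = Free(k)$; by Theorem \ref{thm:kospec}, $\overset{\circ}{K}(Spec(A))$ is (anti-)isomorphic to $Cypol(k)$ with the matching orientation, hence $\overset{\circ}{K}(X) \cong \overset{\circ}{K}(Spec(A))$ as bounded distributive lattices. Now both $X$ and $Spec(A)$ are spectral spaces, and Stone duality (the $1937$ duality recalled in the introduction, \cite{S1}) tells us that a spectral space is determined up to homeomorphism by the bounded distributive lattice of its compact open sets: the functor $X \mapsto \overset{\circ}{K}(X)$ is a contravariant equivalence between spectral spaces with spectral maps and bounded distributive lattices. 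Applying this, the lattice isomorphism $\overset{\circ}{K}(X) \cong \overset{\circ}{K}(Spec(A))$ dualizes to a homeomorphism $X \cong Spec(A) = Spec(Free(k))$, which is exactly what we need.

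The main obstacle, and the step deserving the most care, is the bookkeeping around ``isomorphic versus anti-isomorphic.'' One must check that the orientation of $Cypol(k)$ chosen in the Definition of cylinder based spaces is exactly the one produced by Theorem \ref{thm:kospec} (equivalently, by Proposition \ref{prof} together with Wojcicki's theorem), so that the chain ``$\overset{\circ}{K}(X)$ (anti-)iso to $Cypol(k)$ (anti-)iso to $\overset{\circ}{K}(Spec(Free(k)))$'' composes to an honest \emph{isomorphism} $\overset{\circ}{K}(X)\cong\overset{\circ}{K}(Spec(Free(k)))$ of bounded distributive lattices, not an anti-isomorphism — since only an isomorphism of lattices of compact opens yields a homeomorphism of spectral spaces, whereas an anti-isomorphism would not in general. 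Once that is pinned down, the remaining ingredients — spectrality of $Spec(Free(k))$ and the Stone duality between spectral spaces and bounded distributive lattices — are entirely standard and already in place.
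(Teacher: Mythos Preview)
Your proposal is correct and follows essentially the same route as the paper: Theorem~\ref{thm:kospec} for the forward direction, and Stone duality between spectral spaces and bounded distributive lattices for the converse. Your extra care about the iso/anti-iso orientation is well placed --- the paper itself is loose here, stating ``isomorphism'' in Theorem~\ref{thm:kospec} but ``anti isomorphic'' in the definition and in the proof of Theorem~\ref{thm:fond} --- but the underlying argument is identical.
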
 

\begin{proof} By Theorem \ref{thm:kospec}, Spec is spectral, and  its compact open sets form a lattice anti isomorphic to the lattice $Cypol(k)$.

Conversely, suppose these two conditions hold of a space $X$. By Stone duality between spectral spaces and lattices, see \cite{S1}, two spectral spaces with isomorphic lattices of compact open sets are homeomorphic. So $X\cong Spec(Free(k))$.\end{proof} 

By the previous theorem we derive the following criterion.

\begin{theorem} A topological space is the spectrum of an MV-algebra if and only if it is a closed subset of a cylinder based space.\end{theorem}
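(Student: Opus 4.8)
The plan is to reduce the statement to Theorem~\ref{thm:fond} together with the spectral-space machinery of Section~\ref{Spectra}. The key structural fact, recorded just before Section~\ref{Spectrum}, is that every MV-algebra $A$ is a quotient $Free(k)/J$ of a free MV-algebra, and by Proposition~\ref{i}(iii) the space $Spec(A)$ is then homeomorphic to $V(J)\subseteq Spec(Free(k))$, which is a closed subset of $Spec(Free(k))$ for the Zariski topology. Conversely, if $Y$ is a closed subset of $Spec(Free(k))$, one wants $Y$ to be itself the spectrum of some MV-algebra. So the proof splits into the two implications, and the real content is the converse direction.

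For the forward implication I would argue as follows. Let $X$ be the spectrum of an MV-algebra $A$. Pick a surjection $h\colon Free(k)\to A$ with kernel $J=\ker h$; such a surjection exists since every MV-algebra is an epimorphic image of a free one. Then by Proposition~\ref{i}(iii), $Spec(A/J)=Spec(A)$ is homeomorphic to $V(J)$ inside $Spec(Free(k))$, and $V(J)$ is Zariski-closed by definition of the topology (it is the complement of $\bigcup_{a\notin J}O(a)$, or directly $V(J)=\bigcap_{a\in J}V(a)$). By Theorem~\ref{thm:fond}, $Spec(Free(k))$ is cylinder based, so $X$ is homeomorphic to a closed subset of a cylinder based space, as required.

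For the converse, suppose $X$ is homeomorphic to a closed subset $Y$ of a cylinder based space $Z$. By Theorem~\ref{thm:fond}, $Z$ is homeomorphic to $Spec(Free(k))$ for some cardinal $k$; so without loss of generality $Y$ is a closed subset of $Spec(Free(k))$. Every closed subset of $Spec(B)$, for any MV-algebra $B$, has the form $V(I)$ for some ideal $I$ of $B$: indeed the Zariski-closed sets are exactly the sets $V(S)$ for $S\subseteq B$, and $V(S)=V(ideal(S))$. Hence $Y=V(J)$ for an ideal $J$ of $Free(k)$, and then by Proposition~\ref{i}(iii) we get $Y\cong Spec(Free(k)/J)$, which is the spectrum of the MV-algebra $Free(k)/J$. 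Composing homeomorphisms, $X$ is the spectrum of an MV-algebra.

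The main obstacle — really the only place where something must be checked rather than merely quoted — is the identification of arbitrary Zariski-closed subsets of $Spec(Free(k))$ with sets of the form $V(J)$ for an ideal $J$. This follows from the definition of the Zariski topology on $Spec$ (closed sets are generated by the $V(a)$, hence every closed set is an intersection $\bigcap_{\alpha}V(S_\alpha)$ of such, which by Lemma~\ref{lemm} equals $V(\oplus_\alpha ideal(S_\alpha))$, again of the form $V(J)$), so no genuinely new argument is needed; one simply has to be careful that this works for \emph{all} closed sets and not only the basic ones, which Lemma~\ref{lemm} guarantees. Everything else is a direct invocation of Theorem~\ref{thm:fond} and Proposition~\ref{i}(iii).
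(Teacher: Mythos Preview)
Your proposal is correct and follows precisely the approach the paper intends: the paper states this theorem as an immediate consequence of Theorem~\ref{thm:fond} together with the fact, announced at the start of Section~\ref{Spectrum}, that spectra of MV-algebras are exactly the closed subsets of spectra of free MV-algebras via Proposition~\ref{i}(iii). Your write-up simply spells out this derivation (including the use of Lemma~\ref{lemm} to identify arbitrary closed sets with sets of the form $V(J)$), which the paper leaves implicit.
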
 

It is remarkable that $Max(Free(k))$ is a space different from $Spec(Free(k))$ (for instance, it is Hausdorff), but has an isomorphic basis of open sets, given by the complements of zerosets of McNaughton functions. However, spectral spaces with isomorphic bases of compact open sets are homeomorphic by Stone duality. 

\medskip

 We can give a  characterization of $Max(Free(k))$ partly parallel to theorem \ref{thm:fond} as follows:

\begin{theorem} \label{thm:max} A topological space $X$ is homeomorphic to $Max(Free(k))=[0,1]^k$ if and only if:
\begin{itemize}
\item $X$ has an open basis $B$ which is a lattice anti isomorphic to the lattice $Cypol(k)$. 
\item Let us denote by $C$ the set of complements of $B$. If $Y$ is an ideal in $C$ and $c\in C$, then
$\bigcap Y\subseteq c$ holds if and only if for every $g\in B$ with $c\cap g=\emptyset$ there is $y\in Y$ such that $y\cap g=\emptyset$.  
\item Singletons of elements of $X$ coincide with nonempty minimal intersections of complements of elements of $B$.
 \end{itemize}\end{theorem}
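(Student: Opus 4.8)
The plan is to reduce the characterization of $[0,1]^k = Max(Free(k))$ to the already-established characterization of $Spec(Free(k))$ in Theorem \ref{thm:fond}, exploiting the fact that $Max$ sits inside $Spec$ as the subspace of maximal ideals, and that both spaces have essentially the same lattice/basis of distinguished opens (complements of zerosets of McNaughton functions). First I would verify the ``only if'' direction: if $X = [0,1]^k$, then the sets $U(f) = \{x : f(x) \neq 0\}$ for McNaughton $f \in Free(k)$ form an open basis $B$, and the map $U(f) \mapsto Z(f)$ is the same correspondence as in Proposition \ref{prof}, giving the anti-isomorphism of $B$ with $Cypol(k)$. The second bullet is then just the translation, through this anti-isomorphism, of the statement that in $Spec(Free(k))$ one has $\bigcap_{\alpha} V(I_\alpha) = V(\bigoplus_\alpha I_\alpha)$ (Lemma \ref{lemm}) combined with the compactness/Wojcicki fact that $Z(f) \subseteq \bigcup$ of a family forces a finite subcover — but restricted to maximal points rather than all primes. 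Concretely, complements of basis elements are zerosets $Z(f)$, an ideal $Y$ in the lattice $C$ of zerosets corresponds to an ideal $I$ of $Free(k)$, $\bigcap Y = Z(I)$ (the common zeroset), and the condition ``$\bigcap Y \subseteq c = Z(g)$ iff for every $Z(h)$ disjoint from $Z(g)$ some $Z(y)$ is disjoint from $Z(h)$'' should be exactly a Nullstellensatz-type statement: $g$ lies in the radical/semisimple closure of $I$. I would check that the stated condition encodes precisely $Z(I) \subseteq Z(g)$ holding in $[0,1]^k$, i.e. that it captures the maximal spectrum rather than the full spectrum.

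For the third bullet, I would observe that in $[0,1]^k$ a point $x$ is cut out by the maximal ideal of McNaughton functions vanishing at $x$, and $\{x\} = \bigcap \{Z(f) : f(x) = 0\}$; minimality of this intersection among intersections of zerosets is what distinguishes genuine points (maximal ideals) from lower primes, whose closures are larger. So the bullet says: the topology is recovered as the one whose points are the minimal nonempty intersections of complements of $B$, which pins down $X$ as the $Max$ of the lattice dual to $B$, i.e. $X = Max(Free(k)) = [0,1]^k$.

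For the ``if'' direction I would argue as follows. Given $X$ with an open basis $B$ anti-isomorphic to $Cypol(k)$, Stone/Priestley-type reasoning recovers from $B$ (qua distributive lattice) a spectral space, namely $Spec(Free(k))$, and $X$ must embed into it as a subspace on which $B$ restricts to the given basis. The second bullet is precisely the condition forcing the points of $X$ to be maximal ideals (it is the ``maximal'' analogue of the soberness/compactness conditions that in Theorem \ref{thm:fond} force the points to be all primes): it says that membership $\bigcap Y \subseteq c$ is detected by disjointness with basis elements, which is the characteristic property of the hull-kernel closure on $Max$. The third bullet then says every point of $X$ is obtained, and only points are obtained, as minimal nonempty intersections of closed basis-complements — i.e. $X$ has exactly the right point set. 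Combining, $X$ is homeomorphic to the maximal spectrum of the MV-algebra $Free(k)$, which is the Hausdorff space $[0,1]^k$ by the standard description of $Max(Free(k))$.

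The main obstacle I anticipate is making the second bullet do its job cleanly: one must show that the disjointness-testing condition is equivalent to saying the relevant ideal is an intersection of \emph{maximal} ideals (rather than an arbitrary intersection of primes), and that this, together with the third bullet, rigidifies both the point set and the topology of $X$. In other words, the delicate part is verifying that these two combinatorial conditions on the basis $B$ are exactly strong enough to recover $Max$ and not merely some subspace of $Spec(Free(k))$ lying between $Max$ and $Spec$; this is where a careful use of the fact that $[0,1]^k$ is the closure-free, Hausdorff ``top layer'' of $Spec(Free(k))$, and of the Wojcicki-type Nullstellensatz already invoked in Proposition \ref{prof}, will be needed. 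The remaining steps — the anti-isomorphism $B \cong Cypol(k)$, the identification of intersections of zerosets with common zerosets of ideals, and the translation back and forth through Stone duality — are routine given the machinery already developed in Sections \ref{Spectra} and \ref{Spectrum}.
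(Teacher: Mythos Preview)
Your overall route — factor through an embedding of $X$ into $Spec(Free(k))$ via Stone duality, then use the second and third bullets to carve out $Max$ — is genuinely different from the paper's, which instead builds the homeomorphism $h:X\to[0,1]^k$ directly: the third bullet identifies points of $X$ with minimal nonempty intersections of basic closed sets, and $h$ sends $p$ with $\{p\}=\bigcap_{f\in Y}\neg b_f$ to the point with $\{h(p)\}=\bigcap_{f\in Y}Z(f)$; the rest is checking well-definedness, bijectivity, and that basic closed sets correspond.

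There is a real gap in your plan at the step ``$X$ must embed into $Spec(Free(k))$ as a subspace on which $B$ restricts to the given basis.'' Stone duality does not give you this. The paper explicitly remarks, just before the theorem, that $Max(Free(k))$ and $Spec(Free(k))$ share an isomorphic lattice of basic opens yet are non-homeomorphic (one is Hausdorff, the other not). So knowing the lattice $B$ does not by itself place $X$ inside $Spec(Free(k))$; there is no canonical map to restrict. You have to \emph{construct} the map from scratch using the extra bullets, which is exactly what the paper does. Your anticipated obstacle (``verifying that these two combinatorial conditions \ldots\ are exactly strong enough to recover $Max$ and not merely some subspace'') presupposes an embedding you never produced.

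A smaller point: your reading of the second bullet as a Nullstellensatz/semisimplicity condition on ideals is more elaborate than what the paper needs. In the paper's verification of the ``only if'' direction, one implication is just compactness of zerosets (from $\bigcap Y\subseteq \neg g$ extract one $y$), and the other is the tautology obtained by plugging in $g=\neg c$. The second bullet is closer to ``basic closed sets are compact and complemented in $B$'' than to a statement about radical ideals.
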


\begin{proof} Let us show that $[0,1]^k$ satisfies the three conditions. 

The first  point follows because $B$ is the set of the complements of the zerosets of McNaughton functions in $[0,1]^k$.  

To prove the second, fix $Y$ and $c$. 

Suppose $\bigcap Y\subseteq c$ and $c\subseteq \neg g$, that is $c\cap g=\emptyset$. 

Then $\bigcap Y\subseteq \neg g$. By compactness, there is $y\in Y$ such that $y\cap g=\emptyset$. 

Conversely, suppose that for every $g\in B$ with $c\cap g=\emptyset$ there is $y\in Y$ such that $y\cap g=\emptyset$.  Then 
$g=\neg c$ gives some $y\in Y$ such that $y\cap \neg c=\emptyset$ so $\bigcap Y\cap\neg c=\emptyset$. 

To prove the third point, every singleton is an intersection of basic closed sets, and conversely, a minimal nonempty intersection of basic closed sets is a singleton, otherwise if we have two points we can separate them with a basic closed set because the space $Max(Free(k))$ is Hausdorff.

Conversely, suppose $X$ is a topological space with the three properties listed above. We define a homeomorphism between $X$ and $Max(Free(k))=[0,1]^k.$ 

Let $B=\{b_f\}$ be the basis of $X$ indexed by $f\in Free(k)$. Let $Y\subseteq Free(k)$.  The homeomorphism $h:X\to Max(Free(k))$ sends any element $p$ of $X$ such that $$\{p\}=\bigcap_{f\in Y}\neg b_f$$ to the element $h(p)$ in Max such that $$\{h(p)\}=\bigcap_{f\in Y}Z(f).$$  

We note first that the function $h$ is well-defined. In fact, quite generally, for every $Y,Z,Y',Z'$,  
 if $$\bigcap_{f\in Y}\neg b_f=\bigcap_{f\in Y'}\neg b_f$$ in $X$, then 
$$\bigcap_{f\in Y}Z(f)=\bigcap_{f\in Y'}Z(f)$$ in $Max(Free(k))$ and conversely. This follows from the third point. So, the definition of $h$ does not depend on how the singleton of $p$ is represented as intersection of basic closed sets. Moreover, 
 suppose $\bigcap_{f\in Y}\neg b_f$ is a singleton.  By the third point, it is a nonempty minimal intersection of basic closed sets. 
So, also the corresponding intersection $\bigcap_{f\in Y}Z(f)$ is  a nonempty minimal intersection of basic closed sets. 

 Since two points of $Max(Free(k))$ can always be separated by two basic closed sets, the latter is a singleton in $Max(Free(k))$. So the function $h$ is uniquely determined and sends singletons to singletons. 

Moreover, $h$ is injective, as follows from the previous equalities,  and $h$ is surjective, too; hence $h$ is a bijection. In fact, every singleton of $Max(Free(k))$ has the form $\bigcap_{f\in Y}Z(f)$, and is a nonempty minimal intersection of basic closed sets.  Hence also   $\bigcap_{f\in Y}\neg b_f$ is a nonempty minimal intersection by the third point. Hence the latter intersection is a singleton by the third point.

Moreover, the preimage of $Z(f)$ is $\neg b_f$ and conversely, so the preimages of basic closed sets under $h$ are basic closed, and the same holds for images. So $h$ and $h^{-1}$ are continuous, and $h$ is a homeomorphism. 
\end{proof} 

\section{On spectra of MV-algebras and functors}\label{MV}

By the previous duality, spectra of MV-algebras coincide with closed sets of spectra of free MV-algebras. So, if $I$ is an ideal of $Free(k)$, then the quotient MV-algebra $Free(k)/I$ has spectrum isomorphic to $V(I)$, and 
the theorems of the previous section can be relativized to the ideals $I\subseteq Free(k)$. 

\begin{theorem}\label{thm:I} A topological space $X$ is homeomorphic to $Spec(Free(k)/I)$ if and only if:
\begin{itemize}
\item $X$ is spectral;
\item  $\overset{\circ}{K}(X)$  is a lattice anti isomorphic to the lattice $Cypol(k,I)$  of cylinder polyhedra of $[0,1]^k$ intersected with $Z(I)$.
\end{itemize}
\end{theorem}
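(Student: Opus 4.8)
The plan is to mimic the proof of Theorem \ref{thm:fond}, but now working inside the closed subset $V(I)\subseteq Spec(Free(k))$ rather than in the whole free spectrum. First I would invoke Proposition \ref{i}(iii): the quotient map $h_I\colon Free(k)\to Free(k)/I$ induces a homeomorphism $Spec(Free(k)/I)\cong V(I)$, under which the ideals of $Free(k)/I$ correspond bijectively and order-isomorphically (in both directions, by (i) and (ii)) to the ideals of $Free(k)$ containing $I$. Consequently the compact open sets of $Spec(Free(k)/I)$ correspond to the compact open sets of $Spec(Free(k))$ restricted to $V(I)$, i.e. to sets of the form $O(f)\cap V(I)$ for $f\in Free(k)$.

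Next I would identify this restricted lattice with $Cypol(k,I)$. By Theorem \ref{thm:kospec} (more precisely Proposition \ref{prof} and its cylinder version), the lattice $\overset{\circ}{K}(Spec(Free(k)))$ is anti-isomorphic to $Cypol(k)$ via $O(f)\mapsto Z(f)$. I expect that under this anti-isomorphism the restriction map $O(f)\mapsto O(f)\cap V(I)$ corresponds exactly to the intersection map $Z(f)\mapsto Z(f)\cap Z(I)$; this is the heart of the argument and needs the fact (Wójcicki-style, as used in the proof of Proposition \ref{prof}) that $O(f)\cap V(I)\subseteq O(g)\cap V(I)$ holds iff $Z(g)\cap Z(I)\subseteq Z(f)\cap Z(I)$, i.e. $f$ vanishes on $Z(g)\cap Z(I)$ — equivalently $f\in ideal(g)\vee I$ — which translates the lattice order on one side into reverse inclusion on the other. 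Thus $\overset{\circ}{K}(Spec(Free(k)/I))$ is anti-isomorphic to $Cypol(k,I)$, and $Spec(Free(k)/I)$ is spectral (being a closed subspace of a spectral space, or directly by Theorem \ref{thm:kospec} and Proposition \ref{i}).

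For the converse, suppose $X$ is spectral with $\overset{\circ}{K}(X)$ anti-isomorphic to $Cypol(k,I)$. Since $Spec(Free(k)/I)$ is spectral with $\overset{\circ}{K}$ anti-isomorphic to the same lattice $Cypol(k,I)$, the two spaces have isomorphic lattices of compact open sets, so by Stone duality between spectral spaces and bounded distributive lattices (\cite{S1}) they are homeomorphic, giving $X\cong Spec(Free(k)/I)$.

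The main obstacle I anticipate is the well-definedness and order-reversing bijectivity of the identification $Cypol(k,I)\cong\{O(f)\cap V(I)\mid f\in Free(k)\}$: one must check that two McNaughton functions with the same zeroset-intersection $Z(f)\cap Z(I)$ yield the same open set $O(f)\cap V(I)$, and vice versa, which amounts to showing that membership of $f$ in the ideal generated by $g$ together with $I$ is detected exactly by the prime ideals in $V(I)$ — a relativized version of the "principal ideal equals intersection of containing primes" fact combined with Wójcicki's theorem. Everything else is bookkeeping transported along the homeomorphism of Proposition \ref{i}(iii).
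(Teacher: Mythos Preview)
Your proposal is correct and follows essentially the same route as the paper: the paper's proof simply observes that $Spec(Free(k)/I)\cong V(I)$ via Proposition~\ref{i}(iii), that the basic opens of $V(I)$ are the sets $O(f)\cap V(I)$, and that the anti-isomorphism sends $O(f)\cap V(I)$ to $Z(f)\cap Z(I)$. Your write-up is in fact more careful than the paper's, since you make explicit both the converse direction via Stone duality (which the paper leaves implicit by analogy with Theorem~\ref{thm:fond}) and the well-definedness step requiring the relativized W\'ojcicki argument.
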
 

\begin{proof} $Spec((Free(k))/I)$ is homeomorphic with $V(I)$, and the basic opens of $V(I)$ are of the form $O(f)\cap V(I)$, where $f\in Free(k)$. The isomorphism sends $O(f)\cap V(I)$ to $Z(f)\cap Z(I)$.  
\end{proof} 

\begin{corollary} Let $I,J$ be ideals of $Free(k).$ Then the following conditions are equivalent:
\begin{itemize}
\item $I\subseteq J$;
\item $V(J)\subseteq V(I)$. 
\end{itemize}\end{corollary}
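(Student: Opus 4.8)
The plan is to prove the equivalence of $I\subseteq J$ and $V(J)\subseteq V(I)$ for ideals $I,J$ of $Free(k)$, with both directions being essentially standard facts about the Zariski topology on $Spec$. One direction is completely trivial: if $I\subseteq J$, then any prime ideal $P$ containing $J$ also contains $I$, so $V(J)\subseteq V(I)$ by the very definition of $V(-)$ given in the Preliminaries. This requires no hypothesis beyond the inclusion and works in any MV-algebra, not just a free one.

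For the converse, suppose $V(J)\subseteq V(I)$. The key input is the classical fact (used already in the proof of Proposition~\ref{prof}) that in any MV-algebra an ideal equals the intersection of all prime ideals containing it; equivalently, $a\in ideal(S)$ if and only if $a$ belongs to every prime ideal containing $S$. Applying this to $J$: since $J$ is an ideal, $J=\bigcap_{P\in V(J)}P$. Now take any $a\in I$. For every $P\in V(J)$ we have $P\in V(I)$ by hypothesis, hence $I\subseteq P$, hence $a\in P$. Therefore $a\in\bigcap_{P\in V(J)}P=J$. Since $a\in I$ was arbitrary, $I\subseteq J$.

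I would present this as a short two-line argument, citing the intersection-of-primes property (e.g. via \cite{MS} or \cite{M}, exactly as invoked for Proposition~\ref{prof}) for the nontrivial direction. There is essentially no obstacle here: the only subtlety worth flagging is that the converse genuinely uses the fact that $Spec$ "sees enough" prime ideals — i.e. that every ideal is an intersection of primes — which is the prime ideal theorem for MV-algebras, and this is what makes $V(-)$ order-reflecting and not merely order-preserving. Alternatively one can phrase the whole statement as the observation that $V(-)$ and $ideal(-)$ form a Galois connection whose round trip $V(\cdot)\mapsto\bigcap$ recovers the ideal, so the corollary is just the standard "Galois-closed on both sides" consequence; but the direct argument above is cleaner and self-contained given what has already been established in the excerpt.
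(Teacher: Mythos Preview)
Your proof is correct and follows essentially the same approach as the paper: both directions rest on the identity $I=\bigcap V(I)$ (every ideal is the intersection of the prime ideals containing it), which is exactly what the paper invokes in its one-line justification. Your write-up is simply a more detailed unpacking of that same argument.
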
 

\begin{proof} This follows from the fact that $V(I)$ is the set of prime ideals which contain $I$ and $I=\bigcap V(I)$.
 \end{proof} 

\subsection{Some functors} \label{top}

 Let $MV_p$ be  the category  of presented MV-algebras,
i.e. the category whose objects are pairs $(F,I)$, where 
 $F$ is an MV-algebra freely generated  and $I$ is an ideal of $F$; a morphism $f:(F,I)\to (G,J)$ is an MV-algebra homomorphism sending $F$ to $G$ and $I$ to $J$. Note that MV-algebra homomorphisms from $F$ to $G$ coincide with tuples of MV-polynomials since $F$ and $G$ are free. 

Intuitively, the pair $(F,I)$ is a presentation of the MV-algebra $F/I$. Note that every MV-algebra admits such a presentation. Note also that the category $MV_p$ is equivalent to MV (assuming the axiom of choice), see \cite{MS}. 

Now we wish to relate $MV_p$ with a category built on spectral spaces. The target category is what we call the category $SIC$ of ``spectra in context''. Intuitively, we have two spectral spaces one corresponds to the spectrum of a free MV-algebra, and the other is embedded in the first. However, rather than sticking to inclusions, we consider a certain kind of monomorphisms.  This will allow us to construct some categorial equivalences. 

Formally, the objects of SIC are triples $(X,C,m)$ where $X$ is  a cylinder based topological space, $C$ is a spectral space, and $m:C\to X$ is a  monomorphic spectral map preserving closed sets. A morphism from $(X,C,m)$ to $(Y,D,n)$ is a pair of spectral maps $s:X\to Y$, $t:C\to D$ which makes the corresponding square commute.

Recall that cylinder based spaces coincide with spectra of free MV-algebras, see Theorem \ref{thm:fond}. The relation of $MV_p$ and $SIC$ is  as follows. 

\begin{lemma} Every object $(X,C,m)$ in $SIC$ is isomorphic to $(X,m(C),j)$ where $j$ is the inclusion from $m(C)$ to $X$.
\end{lemma}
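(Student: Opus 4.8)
The plan is to exhibit an explicit isomorphism in $SIC$ between the object $(X,C,m)$ and the object $(X,m(C),j)$, where $j:m(C)\hookrightarrow X$ is the inclusion. The natural candidate is the pair $(\mathrm{id}_X, m')$, where $m':C\to m(C)$ is the corestriction of $m$, i.e.\ $m'$ is $m$ regarded as a map onto its image. For this pair to be a morphism of $SIC$ we must check that the square
\begin{equation*}
\begin{CD}
C @>{m'}>> m(C)\\
@V{\mathrm{id}}VV @VV{j}V\\
C @>{m}>> X
\end{CD}
\end{equation*}
commutes (which is immediate, since $j\circ m' = m = m\circ \mathrm{id}$), and that both $m'$ and $\mathrm{id}_X$ are spectral maps. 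Then I would do the same for the candidate inverse $(\mathrm{id}_X, (m')^{-1})$ and check it is a morphism of $SIC$ as well; since the two pairs compose to the identities on both components, this gives the claimed isomorphism.

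First I would justify that $m(C)$, equipped with the subspace topology inherited from $X$, is itself a spectral space and that the inclusion $j:m(C)\to X$ is a monomorphic spectral map preserving closed sets — this is what makes $(X,m(C),j)$ a legitimate object of $SIC$. The key input is that $m$ is a monomorphic spectral map preserving closed sets: since $m$ is a spectral map it is in particular continuous, and a spectral map which is a monomorphism in the category of spectral spaces is injective, so $m'$ is a continuous bijection $C\to m(C)$; since $m$ preserves closed sets, $m'$ is also a closed map, hence $m'$ is a homeomorphism. Transporting the spectral structure of $C$ along this homeomorphism shows $m(C)$ is spectral, and one checks the subspace topology agrees with the transported one because $m$ preserves closed sets and is continuous. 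Then $j = m\circ (m')^{-1}$ is a composite of spectral maps (the inverse of a homeomorphism between spectral spaces is spectral), hence spectral; it is visibly injective, hence monomorphic; and it preserves closed sets because $m$ does and $(m')^{-1}$ is a homeomorphism.

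With that in hand the rest is bookkeeping: $m'$ is spectral (it is the homeomorphism above, and homeomorphisms of spectral spaces are spectral in both directions), $\mathrm{id}_X$ is trivially spectral, the square commutes, so $(\mathrm{id}_X,m')$ is a $SIC$-morphism $(X,C,m)\to(X,m(C),j)$; symmetrically $(\mathrm{id}_X,(m')^{-1})$ is a $SIC$-morphism in the other direction; and their composites in either order are $(\mathrm{id}_X,\mathrm{id}_C)$ and $(\mathrm{id}_X,\mathrm{id}_{m(C)})$, the identity morphisms. Hence the two objects are isomorphic in $SIC$. The only genuinely substantive point — the one I would be most careful about — is the claim that a spectral monomorphism that preserves closed sets is a homeomorphism onto its image with the subspace topology; here one must use both the injectivity extracted from the categorical monomorphism hypothesis and the closed-map hypothesis, and verify that "preserves closed sets" plus "continuous" forces the subspace topology on $m(C)$ to coincide with the quotient (equivalently, transported) topology from $C$.
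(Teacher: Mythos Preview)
Your approach is correct and coincides with the paper's: the paper's one-line proof simply records that the isomorphism is given by the pair $(m,\mathrm{id}_X)$ (i.e.\ your $(\mathrm{id}_X,m')$, up to the order in which the two components are listed and the tacit identification of $m$ with its corestriction), without spelling out the verifications you carry out. Your additional care in checking that $(X,m(C),j)$ is a legitimate $SIC$-object and that $m'$ is a homeomorphism onto the subspace $m(C)$ is exactly the content the paper leaves implicit.
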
 

\begin{proof} The pair of arrows giving the isomorphism is $(m,id_X)$ where $id_X$ is the identity on $X$. 
\end{proof} 

\begin{theorem}\label{thm:sic} There is a contravariant functor $\alpha$ from the category $MV_p$ to the category $SIC$, injective on objects, and surjective on objects up to isomorphism.
\end{theorem}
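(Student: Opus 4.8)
The plan is to build the functor $\alpha$ explicitly on objects and morphisms and then verify functoriality. On objects, given a presented MV-algebra $(F,I)$ with $F=Free(k)$, I would set $\alpha(F,I)=(X,C,m)$ where $X=Spec(F)$, $C=Spec(F/I)$, and $m=(h_I)_*:Spec(F/I)\to Spec(F)$ is the dual of the quotient map $h_I:F\to F/I$, i.e. $m(P)=h_I^{-1}(P)$. By Theorem \ref{thm:kospec} (and Theorem \ref{thm:fond}) $X$ is cylinder based, hence a legitimate first component; $Spec(F/I)$ is spectral by Proposition \ref{i}(iii) together with the fact (already used in the excerpt) that closed subsets of spectral spaces are spectral; and by Proposition \ref{i}(iii) the map $m$ is a homeomorphism onto $V(I)$, a closed subset of $X$. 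So $m$ is injective and preserves closed sets; that it is a spectral map follows because it is the Stone dual of an MV-homomorphism, equivalently because it is the restriction of a homeomorphism onto a closed subspace, whose inverse images of compact opens $O(f)\cap V(I)$ are compact. Monomorphicity in $SIC$ reduces to injectivity as a function (since arrows are just commuting pairs of spectral maps), which holds.

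On morphisms, given $f:(F,I)\to(G,J)$ in $MV_p$, the universal property / freeness gives induced homomorphisms $F\to G$ and a well-defined quotient homomorphism $\bar f:F/I\to G/J$ because $f(I)\subseteq J$. Applying $Spec(-)$ contravariantly yields spectral maps $s=f_*:Spec(G)\to Spec(F)$ and $t=\bar f_*:Spec(G/J)\to Spec(F/I)$. I would set $\alpha(f)=(s,t)$. The square
\[
\begin{CD}
Spec(G/J) @>{t}>> Spec(F/I)\\
@V{n}VV @VV{m}V\\
Spec(G) @>{s}>> Spec(F)
\end{CD}
\]
commutes because it is obtained by applying the contravariant $Spec$ functor to the commuting square of MV-algebras $F\to G$, $F/I\to G/J$, $h_I$, $h_J$; concretely, for a prime $P$ of $G/J$ one checks $m(t(P))=(\text{pullback along }F\to F/I)\circ(\text{pullback along }\bar f)=(\text{pullback along }F\to G)\circ(\text{pullback along }h_J)=s(n(P))$, all pullbacks of the single prime $P$. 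Continuity of $s,t$ is the (already recorded) continuity of Stone duals, and they are spectral maps since the relevant homomorphisms are MV-homomorphisms whose duals pull back compact opens $O(\cdot)$ to compact opens. Functoriality ($\alpha(\mathrm{id})=\mathrm{id}$, $\alpha(g\circ f)=\alpha(f)\circ\alpha(g)$, contravariantly) is then immediate from functoriality of $Spec$ applied componentwise.

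Finally I would address the two surjectivity-type claims. Injectivity on objects: if $\alpha(F,I)=\alpha(F',I')$ as actual triples, then $Spec(F)=Spec(F')$ forces $k=k'$ (the cylinder based space determines the cardinal via the lattice $\overset{\circ}{K}$, hence $F=F'=Free(k)$), and $V(I)=V(I')$ as subsets forces $I=I'$ since $I=\bigcap V(I)$ by the corollary following Theorem \ref{thm:I}. For surjectivity up to isomorphism, take any $(X,C,m)\in SIC$; by the lemma just proved it is isomorphic to $(X,m(C),j)$ with $j$ an inclusion of a closed subset; since $X$ is cylinder based, $X\cong Spec(Free(k))$ for some $k$, and the closed set $m(C)$ equals $V(I)$ for $I=\bigcap m(C)$, so $(X,m(C),j)\cong\alpha(Free(k),I)$, again invoking Proposition \ref{i}(iii).

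The main obstacle I expect is the bookkeeping around morphisms: verifying that $f(I)\subseteq J$ genuinely yields a well-defined $\bar f$ and that the resulting square of spectra commutes on the nose as functions of primes, and — more delicately — confirming that the component maps $s$ and $t$ are \emph{spectral} maps (not merely continuous), which requires that Stone duals of MV-homomorphisms pull back compact opens to compact opens; this is where one must be careful that $O(f)$ for $f\in F$ exhausts the compact opens (Proposition \ref{pro} and its converse) and that $t$ respects the relativized basis $O(f)\cap V(I)$. None of these steps is deep, but the diagram chase and the identification of compact opens is the part most prone to error.
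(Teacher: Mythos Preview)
Your proof is correct and follows essentially the same route as the paper: the paper sets $\alpha(F,I)=(Spec(F),V(I),j)$ with $j$ the inclusion, whereas you use the isomorphic triple $(Spec(F),Spec(F/I),(h_I)_*)$, and the arguments for injectivity on objects (via $I=\bigcap V(I)$) and essential surjectivity (via the preceding lemma) are the same. Your treatment is in fact more detailed than the paper's, which does not spell out $\alpha$ on morphisms or verify functoriality within the proof itself.
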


\begin{proof} Let $(F,I)$ be an object of $MV_p$, then  $\alpha(F,I):=(Spec(F),V(I),j)$ where $j$ is the inclusion from $V(I)$ to $Spec(F)$. Note that if $Spec(F)=Spec(F')$ then $F=F'$ and if $V(I)=V(I')$ then $I=\bigcap V(I)=\bigcap V(I')=I'$. 

Finally $\alpha$ is surjective on objects up to isomorphism because every object of $SIC$ is isomorphic to one of the form $(Spec(F),V(I),j)$.
\end{proof}

It is useful to compare our functor $\alpha$ with \cite{MS}. In that paper, up to a different notation, there is essentially a duality  between a category $MS_1$ of presented semisimple MV-algebras, viewed as pairs $(F,I)$ such that $F$ is free and $F/I$ is semisimple,  and a category $MS_2$ of closed subsets of $[0,1]^k$, a topological space homeomorphic to the maximal spectrum of $Free(k)$ (see \cite[Proposition 4.2]{M}) . So $MS_1$ has as objects the pairs $(Free(k),I)$  where $I$ is an ideal of $Free(k)$ and $Free(k)/I$ is semisimple: equivalently, $I$ is an intersection of maximal ideals of $Free(k)$. 

The category $MS_1$ is analogous to $MV_p$, and the category $MS_2$ is partially analogous to SIC. The functor $\alpha$ is analogous to the one sending $(Free(k),I)$ to $(Max(Free(k)),V(I)\cap Max(Free(k)))$.

 \begin{remark} Let $h:(F,I)\to (G,J)$ be a morphism in $MV_p$. Thanks to Proposition \ref{i} and the homeomorphisms introduced therein,  we get that  $$\alpha(h):\alpha(G,J)\rightarrow \alpha(F,I)$$ gives a continuous function from $V(J)\subseteq Spec(G)$ endowed with the induced  Zariski topology to $V(I)\subseteq Spec(F)$ endowed with the induced Zariski topology.   \end{remark}

We note finally that we can compose the functor $\alpha$ from $MV_p$ to SIC 
 with the Mundici functor $\Gamma$ (up to composing with an equivalence from MV to $MV_p$) and obtain a functor from abelian $\ell$-groups with unity to SIC. 

\subsection{Presented $\ell$u-groups}

In the previous subsection we have defined a category $MV_p$ of presented MV-algebras and a functor from this category to another category SIC related to spectral spaces. Given the Mundici equivalence between MV-algebras and $\ell$u-groups, we can perform a similar construction on $\ell$u-groups, i.e. $\ell$-groups with strong unit $u$. To this aim, we introduce the category  $Lu_p$ of presented $\ell$u-groups. Let $G_k=\Gamma^{-1}(Free(k))$, where $Free(k)$ is the free MV-algebra over $k$ elements and $\Gamma^{-1}$ is the inverse Mundici functor. The objects of $Lu_p$ are pairs $(G_k,I)$ where $I$ is an $\ell$-ideal of $G_k$. 

Intuitively, $(G_k,I)$ represents the  $\ell$-group $G_k/I$, which is unital because unital groups are closed under quotients. 

Morphisms from $(G_k,I)$ to $(G_h,J)$  are unital $\ell$-group morphisms from $G_k$ to $G_h$ which send $I$ to $J$. 

Like the case of MV-algebras, we have:
\begin{lemma} Every $\ell$u-group admits a presentation.\end{lemma}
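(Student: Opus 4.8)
The plan is to deduce this from the corresponding fact for MV-algebras, transported along the Mundici equivalence $\Gamma$. Given an $\ell$u-group $(H,v)$, first form the MV-algebra $A=\Gamma(H,v)$. Since every MV-algebra admits a presentation (see the discussion of $MV_p$ above, or recall from the Preliminaries that every MV-algebra is a quotient of a free one modulo an ideal), there are a cardinal $k$ and an ideal $I$ of $Free(k)$ with $A\cong Free(k)/I$. By definition $G_k=\Gamma^{-1}(Free(k))$, so that $Free(k)=\Gamma(G_k,u_k)$ for the distinguished strong unit $u_k$ supplied by $\Gamma^{-1}$.

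The second step is to transfer the ideal $I$ to the group side. Here I would invoke the classical order-isomorphism between the ideals of an MV-algebra $\Gamma(G,u)$ and the $\ell$-ideals of $G$ (Mundici; see also \cite{CDM}), which moreover intertwines MV-quotients with $\ell$-group quotients: the ideal $I\subseteq Free(k)$ corresponds to a unique $\ell$-ideal $J$ of $G_k$, and $\Gamma(G_k/J,\overline{u_k})\cong Free(k)/I$, where $\overline{u_k}$ denotes the image of $u_k$ in $G_k/J$. Note that $\overline{u_k}$ is a strong unit of $G_k/J$, since the homomorphic image of a strong unit is again a strong unit; hence $(G_k/J,\overline{u_k})$ is a genuine $\ell$u-group and $\Gamma(G_k/J,\overline{u_k})\cong A=\Gamma(H,v)$.

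Finally, since $\Gamma$ is an equivalence of categories it reflects isomorphisms, so $(G_k/J,\overline{u_k})\cong (H,v)$ as $\ell$u-groups, and in particular $G_k/J\cong H$. Thus the pair $(G_k,J)$, with $J$ an $\ell$-ideal of $G_k$, is the desired presentation of $(H,v)$ in $Lu_p$. The only point that needs care --- rather than a genuine obstacle --- is the compatibility of the ideal/$\ell$-ideal correspondence under $\Gamma$ with quotients, together with the descent of the strong unit to the quotient; once this standard fact is recorded, the lemma follows formally from the MV-algebra case already established above.
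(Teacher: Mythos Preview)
Your proof is correct and follows essentially the same strategy as the paper: present $\Gamma(H,v)$ as $Free(k)/I$ and transport this presentation back through the Mundici equivalence. The only cosmetic difference is that the paper applies $\Gamma^{-1}$ directly to the quotient map $\pi:Free(k)\to Free(k)/I$ and invokes \cite[Lemma 7.2.1]{CDM} to ensure $\Gamma^{-1}(\pi)$ is surjective (then takes its kernel as $J$), whereas you identify $J$ via the ideal/$\ell$-ideal correspondence and conclude by noting that $\Gamma$ reflects isomorphisms; these are two packagings of the same fact about the equivalence.
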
 
\begin{proof} Let $(G,u)$ be an $\ell$u-group and $A=\Gamma(G,u)$. Let $A=Free(k)/I$ be a presentation of the MV-algebra $A$. Let $\pi:Free(k)\to Free(k)/I$ be the quotient map. Note that $\pi$ is surjective. So, also $\Gamma^{-1}(\pi):\Gamma^{-1}(Free(k))\to \Gamma^{-1}(A)$ is surjective by \cite[Lemma 7.2.1]{CDM}. So $(G,u)$ is an epimorphic image of $\Gamma^{-1}(Free(k))$ and admits a representation.  \end{proof} 

\begin{corollary} The categories $MV_p$  and $Lu_p$  are equivalent.
\end{corollary}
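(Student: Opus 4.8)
The plan is to transport the Mundici equivalence $\Gamma$ down to the level of presentations, using the classical correspondence between ideals of an MV-algebra and $\ell$-ideals of the associated $\ell$u-group. Recall from \cite[Chapter 7]{CDM} that for an $\ell$u-group $(G,u)$ the map $H\mapsto H\cap[0,u]$ is an order isomorphism from the lattice of $\ell$-ideals of $G$ onto the lattice of ideals of $\Gamma(G,u)$, with inverse $I\mapsto\langle I\rangle$, the $\ell$-ideal of $G$ generated by $I\subseteq[0,u]\subseteq G$, and that this correspondence commutes with homomorphisms. I would first record this (or simply cite it), since it is the one external ingredient, and then build a pair of functors out of it.

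Define $\widehat{\Gamma^{-1}}\colon MV_p\to Lu_p$ on objects by $\widehat{\Gamma^{-1}}(Free(k),I)=(G_k,\langle I\rangle)$, where $G_k=\Gamma^{-1}(Free(k))$ and $Free(k)$ is identified with the unit interval $[0,u]$ of $G_k$, and on morphisms by $\widehat{\Gamma^{-1}}(f)=\Gamma^{-1}(f)$; dually define $\widehat{\Gamma}\colon Lu_p\to MV_p$ by $\widehat{\Gamma}(G_k,H)=(Free(k),H\cap[0,u])$ and $\widehat{\Gamma}(g)=\Gamma(g)$. I would check well-definedness on morphisms: if $f\colon(Free(k),I)\to(Free(h),J)$ satisfies $f(I)\subseteq J$, then by compatibility of the ideal correspondence with homomorphisms $\Gamma^{-1}(f)$ sends $\langle I\rangle$ into $\langle J\rangle$, so it is a morphism of $Lu_p$; symmetrically, if $g$ sends the $\ell$-ideal $H$ into $H'$, then, $g$ being unital, $\Gamma(g)$ sends $H\cap[0,u]$ into $H'\cap[0,v]$ (with $u,v$ the units of $G_k,G_h$), so it is a morphism of $MV_p$. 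Functoriality of both is inherited from that of $\Gamma$ and $\Gamma^{-1}$. A minor point I would flag: $\Gamma(G_k)$ is only canonically isomorphic to $Free(k)$, so one silently composes with this isomorphism to land on genuine objects of $MV_p$, which is harmless.

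Then I would prove $\widehat{\Gamma}$ and $\widehat{\Gamma^{-1}}$ are quasi-inverse. On objects, $\widehat{\Gamma}\,\widehat{\Gamma^{-1}}(Free(k),I)=(Free(k),\langle I\rangle\cap[0,u])=(Free(k),I)$ and $\widehat{\Gamma^{-1}}\,\widehat{\Gamma}(G_k,H)=(G_k,\langle H\cap[0,u]\rangle)=(G_k,H)$, both by the ideal isomorphism (here the strong-unit property of $u$ is what makes $\langle H\cap[0,u]\rangle=H$); on morphisms the two composites are $\Gamma\,\Gamma^{-1}$ and $\Gamma^{-1}\,\Gamma$, which are naturally isomorphic to the respective identity functors because $\Gamma$ is an equivalence. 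This supplies the natural isomorphisms $\widehat{\Gamma}\,\widehat{\Gamma^{-1}}\cong\mathrm{id}_{MV_p}$ and $\widehat{\Gamma^{-1}}\,\widehat{\Gamma}\cong\mathrm{id}_{Lu_p}$, hence the equivalence.

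I expect the only real work to be the bookkeeping around the ideal correspondence: verifying that the classical lattice isomorphism between ideals of $\Gamma(G,u)$ and $\ell$-ideals of $G$ genuinely commutes with the actions of homomorphisms, so that ``$f$ respects the presentations'' and ``$\Gamma^{-1}(f)$ respects the presentations'' are equivalent, and that nothing is lost in passing through the non-strict isomorphisms $\Gamma\Gamma^{-1}\cong\mathrm{id}$ and $\Gamma^{-1}\Gamma\cong\mathrm{id}$. An alternative, perhaps shorter, route avoids the explicit functors altogether: the lemma just proved says every $\ell$u-group has a presentation of the form $G_k/I$, which is exactly the $\ell$u-group analogue of the presentability of MV-algebras, so the argument of \cite{MS} yielding $MV_p\simeq MV$ applies verbatim to yield that $Lu_p$ is equivalent to the category of $\ell$u-groups; composing this with $MV_p\simeq MV$ and the Mundici equivalence then gives $MV_p\simeq Lu_p$. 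As in \cite{MS}, the axiom of choice enters only through the choice of presentations hidden in $Free(k)$ and $G_k$.
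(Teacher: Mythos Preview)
Your proposal is correct and follows essentially the same approach as the paper: the paper's one-line proof simply states that the equivalence sends $(Free(k),I)$ to $(\Gamma^{-1}(Free(k)),\Gamma^{-1}(I))$, which is exactly your functor $\widehat{\Gamma^{-1}}$, with $\Gamma^{-1}(I)$ a shorthand for the $\ell$-ideal $\langle I\rangle$ you write out explicitly. You have supplied the careful bookkeeping (ideal correspondence, well-definedness on morphisms, quasi-inverse check) that the paper leaves entirely to the reader, and your alternative route via presentability and $MV_p\simeq MV$ is also sound but not the one the paper takes.
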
 

\begin{proof} The equivalence sends $(Free(k),I)$ to $(\Gamma^{-1}(Free(k)),\Gamma^{-1}(I))$. \end{proof} 

\begin{corollary} There is a functor from $Lu_p$ to $SIC$ injective on objects. \end{corollary}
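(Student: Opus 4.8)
The plan is to compose the two functors already constructed in this subsection. We have the equivalence $MV_p \simeq Lu_p$ from the preceding corollary, whose object part sends $(Free(k),I)$ to $(\Gamma^{-1}(Free(k)),\Gamma^{-1}(I)) = (G_k,\Gamma^{-1}(I))$, and we have the contravariant functor $\alpha\colon MV_p \to SIC$ from Theorem~\ref{thm:sic}, which is injective on objects. So first I would take the inverse of the equivalence, call it $\Lambda\colon Lu_p \to MV_p$, sending $(G_k,I)$ to $(Free(k),\Gamma(I))$ (using that $\Gamma$ carries $\ell$-ideals of $G_k$ to ideals of $Free(k)$ bijectively, which is part of what makes the previous corollary work). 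Then the desired functor is the composite $\alpha \circ \Lambda\colon Lu_p \to SIC$.

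The one thing to check is that the composite is injective on objects. Since $\Lambda$ is (the object part of) an equivalence of categories restricted to a skeletal-like presentation, it is injective on objects: distinct pairs $(G_k,I)$, $(G_h,J)$ go to distinct pairs $(Free(k),\Gamma(I))$, $(Free(h),\Gamma(J))$ — indeed $k$ is recovered from $G_k$ and $I$ is recovered from $\Gamma(I)$ via $\Gamma^{-1}$. And $\alpha$ is injective on objects by Theorem~\ref{thm:sic}: as recorded in its proof, $Spec(F)$ determines $F$ and $V(I)$ determines $I$ since $I = \bigcap V(I)$. A composite of two maps each injective on objects is injective on objects, so $\alpha\circ\Lambda$ is injective on objects, which is all that is claimed. (One may also note, as a bonus, that it inherits surjectivity up to isomorphism on objects from $\alpha$, but this is not part of the statement.)

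There is essentially no obstacle here: the corollary is a formal consequence of two results proved immediately above it, and the proof is a two-line composition argument. The only mild point of care is bookkeeping about which direction the equivalence $MV_p \simeq Lu_p$ is stated in — the corollary above gives it as sending $MV_p$-objects to $Lu_p$-objects, so one must invoke its quasi-inverse to land a functor \emph{out of} $Lu_p$, but an equivalence always has a quasi-inverse, and on the explicitly presented objects the inverse is literally $(G_k,I)\mapsto(Free(k),\Gamma(I))$, which is again injective on objects. So the proof reads: let $\Lambda$ be a quasi-inverse of the equivalence of the previous corollary; then $\alpha\circ\Lambda\colon Lu_p\to SIC$ is a functor, and it is injective on objects because both $\Lambda$ and $\alpha$ are.

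\begin{proof} Let $E\colon MV_p \to Lu_p$ be the equivalence of the previous corollary, sending $(Free(k),I)$ to $(G_k,\Gamma^{-1}(I))$, and let $\Lambda\colon Lu_p \to MV_p$ be a quasi-inverse; on presented objects $\Lambda$ sends $(G_k,I)$ to $(Free(k),\Gamma(I))$. Composing with the functor $\alpha$ of Theorem~\ref{thm:sic} yields a functor $\alpha\circ\Lambda\colon Lu_p \to SIC$. It is injective on objects: $\Lambda$ is injective on objects, since from $(Free(k),\Gamma(I))$ one recovers $k$ and, applying $\Gamma^{-1}$, the $\ell$-ideal $I$; and $\alpha$ is injective on objects by Theorem~\ref{thm:sic}. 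Hence the composite is injective on objects. \end{proof}
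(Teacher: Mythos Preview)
Your proof is correct and is precisely the argument the paper intends: the corollary is stated without proof, as an immediate consequence of composing the equivalence $Lu_p\simeq MV_p$ just established with the functor $\alpha$ of Theorem~\ref{thm:sic}. Your bookkeeping on injectivity of objects (recovering $k$ from $G_k$ and $I$ from $\Gamma(I)$, then invoking the injectivity of $\alpha$) fills in exactly the details the paper leaves implicit.
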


\subsection{On the functors $\beta$, $Id_c$, $Spec$ and $\overset{\circ}{K}$}

Let $A$ be an MV-algebra. Let $\beta(A)$ the Belluce lattice of $A$ from \cite{Palermo}; that is $\beta(A)=A/\sim$, where $x\sim y$ if $x$ and $y$ belong to the same prime ideals.  Further  let $Id_c(A)$ be the lattice of principal ideals of $A$. Both $\beta$ and $Id_c$ extend to functors from the category of MV-algebras to the category of bounded distributive lattices. 

Interestingly, the prime ideals of the lattice $\beta(A)$ coincide with the prime ideals of the MV-algebra $A$ (see \cite[on p.375]{Palermo}). Moreover, let 
$\overset{\circ}{K}(X)$ be the lattice of compact open sets of a spectral space $X$. We have:

\begin{lemma} (see \cite{W19}) For every MV-algebra $A$, $\overset{\circ}{K}(Spec(A))\cong Id_c(A)$.\end{lemma}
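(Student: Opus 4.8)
The plan is to exhibit the isomorphism explicitly as the map $\phi\colon Id_c(A)\to\overset{\circ}{K}(Spec(A))$ sending the principal ideal $ideal(a)$ to the basic open $O(a)$, and then to check that $\phi$ is a well-defined order isomorphism; since the target is a lattice, this will automatically upgrade to a lattice isomorphism.

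First I would collect what is already available about compact opens of $Spec(A)$. By Proposition \ref{pro} each $O(a)$ is compact open, and by the proposition immediately following it every compact open subset of $Spec(A)$ is of the form $O(f)$ for some $f\in A$. Hence the assignment $a\mapsto O(a)$ has exactly $\overset{\circ}{K}(Spec(A))$ as its range, which already gives surjectivity of $\phi$. The core input of the argument is the classical prime separation lemma for MV-algebras: every ideal is the intersection of the prime ideals containing it (equivalently, an ideal maximal with respect to omitting a fixed element is prime). Applied to a principal ideal this gives $ideal(a)=\bigcap V(a)=\bigcap\{P\in Spec(A):a\in P\}$. From this identity one reads off the chain of equivalences $ideal(a)\subseteq ideal(b)\iff a\in ideal(b)\iff V(b)\subseteq V(a)\iff O(a)\subseteq O(b)$. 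This simultaneously shows that $\phi$ is well defined (if $ideal(a)=ideal(b)$ then $V(a)=V(b)$, hence $O(a)=O(b)$) and that $\phi$ is both order preserving and order reflecting, i.e. an order embedding; together with surjectivity, $\phi$ is an order isomorphism of posets.

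To conclude, I would note that $\overset{\circ}{K}(Spec(A))$ is a (distributive) lattice: it is closed under finite unions via $O(a)\cup O(b)=O(a\oplus b)$ (Lemma \ref{propo}) and under finite intersections via $O(a)\cap O(b)=O(a\wedge b)$, the latter because for a prime ideal $P$ one has $a\wedge b\in P$ iff $a\in P$ or $b\in P$. An order isomorphism from $Id_c(A)$ onto a lattice makes $Id_c(A)$ a lattice as well and transports meets and joins, so $\phi$ is a lattice isomorphism (with $ideal(a)\vee ideal(b)=ideal(a\oplus b)$ and $ideal(a)\wedge ideal(b)=ideal(a\wedge b)$). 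The only genuinely nontrivial ingredient is the prime separation lemma; this is standard (it underlies Chang's completeness theorem and is recorded in \cite{W19}), and everything else is routine verification.
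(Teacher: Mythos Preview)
Your argument is correct. The paper itself does not give a proof of this lemma at all: it is stated with the parenthetical ``(see \cite{W19})'' and left without proof. So there is nothing to compare against on the paper's side; you have supplied exactly the standard argument that the citation is meant to point to.

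A couple of remarks confirming that your ingredients are legitimate in this paper's context. The two facts about compact opens you invoke are precisely Proposition~\ref{pro} and the proposition following it, so surjectivity of $\phi$ is available as you say. The prime separation lemma you call the ``only genuinely nontrivial ingredient'' is already used in the paper (see the proof of Proposition~\ref{prof}, where it is stated that a principal ideal is the intersection of all prime ideals containing it), so you are not importing anything foreign. Your verification that $O(a)\cap O(b)=O(a\wedge b)$ is also fine: downward closure of $P$ gives one direction and primeness the other, so the chain of equivalences and the lattice-structure transport both go through.
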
 

\begin{lemma} $Id_c(A)\cong\beta(A)$.\end{lemma}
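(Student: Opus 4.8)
I want to show that the Belluce lattice $\beta(A)$ and the lattice $Id_c(A)$ of principal ideals are isomorphic as bounded distributive lattices. The natural candidate map is $\gamma : Id_c(A) \to \beta(A)$ sending the principal ideal $ideal(x)$ to the $\sim$-class $[x]$ of $x$, together with the reverse assignment $[x] \mapsto ideal(x)$. The first task is to check that both assignments are well-defined: if $ideal(x) = ideal(y)$ then $x$ and $y$ belong to exactly the same prime ideals (since a principal ideal is the intersection of the primes containing it — this is the fact quoted in the proof of Proposition~\ref{prof}), hence $x \sim y$ and $[x] = [y]$; conversely if $x \sim y$ then $x$ and $y$ lie in the same primes, so $ideal(x)$ and $ideal(y)$ have the same intersection-of-containing-primes, namely $ideal(x) = \bigcap\{P : x \in P\} = \bigcap\{P : y \in P\} = ideal(y)$. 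This simultaneously shows the two maps are mutually inverse bijections.

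**Order of steps.** First I would record the key identity: for $x \in A$, $ideal(x) = \bigcap\{P \in Spec(A) : x \in P\}$ (valid in any MV-algebra). Second, use it to establish that $ideal(x) = ideal(y) \iff x \sim y$, which gives a well-defined bijection $\gamma$ in both directions. Third, I would verify $\gamma$ is order-preserving in both directions: $ideal(x) \subseteq ideal(y)$ iff every prime containing $y$ contains $x$, and the order on $\beta(A)$ is exactly the one induced by this "belongs to more primes" relation (the order in $\beta(A)$ is $[x] \le [y]$ iff every prime ideal containing $y$ contains $x$, equivalently $[x \wedge y] = [x]$). Since $\gamma$ is an order-isomorphism of bounded posets and both sides are lattices, it automatically preserves $\wedge$, $\vee$, $0$ and $1$; concretely $\gamma(ideal(x) \cap ideal(y)) = \gamma(ideal(x \wedge y)) = [x \wedge y] = [x] \wedge [y]$ and $\gamma(ideal(x) \vee ideal(y)) = \gamma(ideal(x \oplus y)) = [x \oplus y] = [x] \vee [y]$, using that $ideal(x) \vee ideal(y) = ideal(x \oplus y)$ in $Id_c(A)$ and the dual fact in $\beta(A)$.

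**Alternative and the main obstacle.** An even quicker route is to invoke the two facts already stated in the excerpt: $\overset{\circ}{K}(Spec(A)) \cong Id_c(A)$ and the prime ideals of $\beta(A)$ coincide with those of $A$; the latter (with the Belluce correspondence from \cite{Palermo}) gives $\overset{\circ}{K}(Spec(\beta(A))) = \overset{\circ}{K}(Spec(A))$, and since $\beta(A)$ is a bounded distributive lattice it is (by Stone duality) recovered from $\overset{\circ}{K}(Spec(\beta(A)))$, so $\beta(A) \cong \overset{\circ}{K}(Spec(A)) \cong Id_c(A)$. The main point requiring care — and the only real obstacle — is getting the orientation of the orders right: the Zariski/hull-kernel correspondences are inclusion-reversing in one direction and preserving in the other, so one must be careful that the composite $Id_c(A) \to \overset{\circ}{K}(Spec(A)) \to \beta(A)$ ends up order-preserving rather than order-reversing. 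I would pin this down by tracking a single element $x$ through all the identifications ($ideal(x) \mapsto O(x) \mapsto$ the compact open $\{P : x \notin P\} \mapsto [x]$) and checking on that element that inclusions go the intended way.
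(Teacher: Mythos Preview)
Your proposal is correct and takes essentially the same approach as the paper: the paper's proof is the single sentence ``two elements of $A$ belong to the same prime ideals if and only if they generate the same ideal,'' which is exactly the equivalence $x\sim y \iff ideal(x)=ideal(y)$ that you establish via $ideal(x)=\bigcap\{P\in Spec(A):x\in P\}$. Your write-up simply unpacks this one line in more detail (well-definedness, bijectivity, order-compatibility), and your alternative Stone-duality route is extra but unnecessary here.
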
 

\begin{proof} This holds because two elements of $A$ belong to the same prime ideals if and only if they generate the same ideal. \end{proof} 

\begin{corollary} $\overset{\circ}{K}(Spec(A))\cong\beta(A)$.
\end{corollary}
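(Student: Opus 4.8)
The plan is to derive the final corollary $\overset{\circ}{K}(Spec(A))\cong\beta(A)$ purely formally, by composing the two isomorphisms already established immediately above it. The two ingredients are the lemma quoted from \cite{W19}, stating $\overset{\circ}{K}(Spec(A))\cong Id_c(A)$, and the lemma $Id_c(A)\cong\beta(A)$ whose proof (two elements of $A$ generate the same principal ideal iff they lie in exactly the same prime ideals) has just been given. Chaining these yields a lattice isomorphism $\overset{\circ}{K}(Spec(A))\cong Id_c(A)\cong\beta(A)$, which is exactly the assertion.

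Concretely, I would first recall that the isomorphism $\overset{\circ}{K}(Spec(A))\cong Id_c(A)$ is the map sending a compact open $O(f)$ to the principal ideal $ideal(\{f\})$; this is well-defined and bijective because $O(f)\subseteq O(g)$ iff $f$ lies in the principal ideal of $g$ (the principal ideal of an element being the intersection of the prime ideals containing it, as used in the proof of Proposition \ref{prof}). Then I would recall that $Id_c(A)\cong\beta(A)$ sends the principal ideal of $f$ to the Belluce class $[f]_\sim$, which is well-defined and injective precisely because $f\sim g$ (same prime ideals) iff $ideal(\{f\})=ideal(\{g\})$. Composing, the combined isomorphism sends $O(f)\mapsto [f]_\sim$, and it is a lattice isomorphism with respect to inclusion on the left and the induced order on $\beta(A)$.

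There is essentially no obstacle here: the statement is a one-line consequence of transitivity of the isomorphism relation among bounded distributive lattices, so the only thing to check is that the two preceding isomorphisms are genuine lattice isomorphisms (not merely order isomorphisms or poset isomorphisms), which they are, since $O(f\oplus g)=O(f)\cup O(g)$ by Lemma \ref{propo}, $O(f\wedge g)=O(f)\cap O(g)$, and the corresponding joins and meets in $Id_c(A)$ and $\beta(A)$ are computed compatibly (the join of principal ideals of $f$ and $g$ is the principal ideal of $f\oplus g$, and similarly for meets). Hence the proof is simply: ``This follows by composing the isomorphisms $\overset{\circ}{K}(Spec(A))\cong Id_c(A)$ and $Id_c(A)\cong\beta(A)$ of the two preceding lemmas.''

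Therefore my proposed proof is the following short text:

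\begin{proof}
Compose the isomorphism $\overset{\circ}{K}(Spec(A))\cong Id_c(A)$ of the lemma from \cite{W19} with the isomorphism $Id_c(A)\cong\beta(A)$ of the previous lemma. Explicitly, $O(f)$ corresponds to the principal ideal $ideal(\{f\})$, which in turn corresponds to the Belluce class of $f$; since $O(f\oplus g)=O(f)\cup O(g)$ and $O(f\wedge g)=O(f)\cap O(g)$, and the joins and meets in $Id_c(A)$ and $\beta(A)$ are the images of those in $\overset{\circ}{K}(Spec(A))$, the composite is a lattice isomorphism.
\end{proof}
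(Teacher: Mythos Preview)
Your proposal is correct and takes essentially the same approach as the paper: the corollary is stated without proof immediately after the two lemmas $\overset{\circ}{K}(Spec(A))\cong Id_c(A)$ and $Id_c(A)\cong\beta(A)$, and the intended argument is simply to compose them. Your added explicit description of the composite map and the check that joins and meets are preserved is more detail than the paper provides, but entirely in the same spirit.
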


One can ask whether the three functors $\beta,Id_c,\overset{\circ}{K}\circ Spec$ are naturally isomorphic.

It follows from  \cite{DL} that:
\begin{itemize}
\item when $A=Free(k)$ is free, $\beta(A)$ is the lattice of cylinder polyhedra in $[0,1]^k$;
\item the lattices $\beta(A)$, for $A$ arbitrary, are exactly the closed epimorphic images of $\beta(Free(k))$ for some $k$.   
\end{itemize}

\subsection{A new category of lattices}

We have defined two categories $MV_p$, based on MV-algebras, and SIC, based on spectral spaces. It is natural to try to relate these categories with analogous  categories of lattices and Priestley spaces, using Stone duality for lattices and Priestley duality. 
We have defined two categories $MV_p$, based on MV-algebras, and SIC, based on spectral spaces. It is natural to try to relate these categories with analogous  categories of lattices and Priestley spaces, using Stone duality for lattices and Priestley duality. 

So we introduce a category LIC (lattices in context)  analogous to the category SIC previously introduced. 

Namely, LIC is the category of triples 

$$(\beta(Free),L,f)$$ where $\beta$ denotes the Belluce functor, $L$ is a lattice and $f:\beta(Free)\to L$ is a lattice epimorphism such that 
the Stone dual of $f$  preserves closed sets.  A morphism from $(\beta(Free_1),L_1,f)$ to $(\beta(Free_2),L_2,g)$ is a pair of lattice homomorphisms $s:\beta(Free_1)
\to\beta(Free_2)$, $t:L_1\to L_2$ which makes the corresponding square commute. 

Now we can state:

\begin{theorem}\label{thm:siclic} The categories SIC and LIC are dual. 
\end{theorem}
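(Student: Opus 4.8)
The plan is to obtain the duality by \emph{lifting} the classical Stone duality between bounded distributive lattices and spectral spaces (see \cite{S1}) to these two ``arrow with context'' categories. Recall that Stone duality is implemented by the functors $Spec(-)$, sending a lattice to its prime spectrum with the Zariski topology, and $\overset{\circ}{K}(-)$, sending a spectral space to its lattice of compact open sets, together with natural isomorphisms $\eta_X\colon X\to Spec(\overset{\circ}{K}(X))$ and $\varepsilon_L\colon L\to\overset{\circ}{K}(Spec(L))$ (the former a homeomorphism, the latter a lattice isomorphism), and that it is a dual equivalence $\mathbf{DLat}\simeq\mathbf{Spec}^{op}$. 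I would define a contravariant functor $\Phi\colon SIC\to LIC$ by $\Phi(X,C,m)=(\overset{\circ}{K}(X),\overset{\circ}{K}(C),\overset{\circ}{K}(m))$, where $\overset{\circ}{K}(m)\colon\overset{\circ}{K}(X)\to\overset{\circ}{K}(C)$ is the preimage map $U\mapsto m^{-1}(U)$, and $\Phi(s,t)=(\overset{\circ}{K}(s),\overset{\circ}{K}(t))$ on morphisms; a commuting square of spectral maps dualises to a commuting square of lattice homomorphisms, so this is well-defined on morphisms. Dually, $\Psi\colon LIC\to SIC$ sends $(\beta(Free),L,f)$ to $(Spec(\beta(Free)),Spec(L),Spec(f))$, again taking commuting squares to commuting squares contravariantly. (For the statement to make literal sense one reads the first component of an object of $LIC$ up to isomorphism of lattices, i.e.\ one works with replete categories; this is harmless, an equivalence being insensitive to it.)

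The substance is to check that $\Phi$ and $\Psi$ really land in the target categories. For $\Phi$: by the Corollary $\overset{\circ}{K}(Spec(A))\cong\beta(A)$ together with Theorem \ref{thm:fond}, a space $X$ is cylinder based exactly when $\overset{\circ}{K}(X)$ is isomorphic to $\beta(Free(k))$ for some cardinal $k$, so the first component of $\Phi(X,C,m)$ has the right form. Next one observes that a monomorphic spectral map preserving closed sets is exactly a homeomorphic embedding onto a closed subspace (monic spectral maps are injective, since the one-point space is spectral; preservation of closed sets makes $m$ a closed map, hence $m\colon C\to m(C)$ a closed continuous bijection, i.e.\ a homeomorphism, onto the closed set $m(C)$); combining with the Lemma preceding Theorem \ref{thm:sic} we may assume $m$ is the inclusion of a closed subspace $C\subseteq X$. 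Then $\overset{\circ}{K}(m)$ is the restriction $U\mapsto U\cap C$, which is onto, since every compact open of a closed subspace of a spectral space is the trace of a compact open of the ambient space; hence $\overset{\circ}{K}(m)$ is a lattice epimorphism, and by naturality of $\eta$ its Stone dual is, up to the homeomorphisms $\eta_X,\eta_C$, the map $m$ itself, which preserves closed sets. Thus $\Phi(X,C,m)\in LIC$. For $\Psi$: $Spec(\beta(Free(k)))$ is spectral with $\overset{\circ}{K}\cong\beta(Free(k))$, hence cylinder based; $Spec(f)$ is monic because the Stone dual equivalence carries epimorphisms to monomorphisms; and ``$Spec(f)$ preserves closed sets'' is exactly the standing hypothesis on $f$. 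So $\Psi(\beta(Free),L,f)\in SIC$. The same bookkeeping shows that the two side conditions ``$m$ monic and closed-set-preserving'' and ``$f$ epi and Stone-dual-closed-set-preserving'' correspond to one another, each amounting to a closed embedding, respectively to a closed quotient.

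Finally I would verify $\Psi\circ\Phi\cong\mathrm{Id}_{SIC}$ and $\Phi\circ\Psi\cong\mathrm{Id}_{LIC}$: componentwise the isomorphisms are the Stone units $\eta$ and counits $\varepsilon$, which are natural isomorphisms, and naturality of $\eta$ for the map $m$ says precisely that $(\eta_X,\eta_C)$ is an isomorphism in $SIC$ from $(X,C,m)$ to $\Psi\Phi(X,C,m)$, natural in the object; symmetrically on the $LIC$ side with $\varepsilon$. Hence $\Phi$ and $\Psi$ are mutually quasi-inverse contravariant equivalences, so $SIC$ and $LIC$ are dual. I expect the genuine obstacle to be not any single hard step but assembling this dictionary for the side conditions — that ``monomorphic spectral map preserving closed sets'' is the same data on the topological side as ``lattice epimorphism whose Stone dual preserves closed sets'' on the algebraic side, and that $\overset{\circ}{K}(-)$ and $Spec(-)$ translate it back and forth correctly; once this is in place, the rest is a routine application of Stone duality together with the identification, via $\overset{\circ}{K}\circ Spec\cong\beta$ and Theorem \ref{thm:fond}, of cylinder based spaces with the spectra of Belluce lattices of free MV-algebras.
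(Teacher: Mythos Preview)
Your proposal is correct and follows essentially the same approach as the paper: both arguments obtain the duality by applying Stone duality componentwise to the triples and to the commuting squares of morphisms. Your write-up is in fact considerably more detailed than the paper's, which simply declares that Stone duality sends $(\beta(Free),L,f)$ to its triple of Stone duals and that the resulting $m$ is a monomorphic closed-set-preserving spectral map; your explicit verification that the side conditions on $m$ and on $f$ correspond (monic $\leftrightarrow$ epi, closed-set-preserving $\leftrightarrow$ Stone-dual-closed-set-preserving), together with the use of the units $\eta,\varepsilon$ to exhibit the quasi-inverses, fills in exactly what the paper leaves to the reader.
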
 

\begin{proof} There exists a  duality $\gamma$ which assigns to every object $(\beta(Free),L, f)$  of LIC an object of SIC, namely 

\noindent $\gamma(\beta(Free), L, f)$=(X=Stone dual of $\beta(Free)$, C=Stone dual of $L$, $m$=Stone dual of $f)$ 

Indeed, by Theorem \ref{thm:fond} the Stone dual of $\beta(Free)$ is a cylinder based topological space and by Stone duality $C$ is a  spectral space,  and  $m$ is a monomorphic spectral map which preserves closed sets. 

Moreover $\gamma$ assigns to    a morphism from $(\beta(Free_1),L_1,f)$ to $(\beta(Free_2),L_2,g)$  a morphism from  $(X=$Stone dual of $\beta(Free_1)$, $C=$Stone dual of $L_1$, $m=$Stone dual of $f)$ to $(Y=$Stone dual of $\beta(Free_2)$, $D=$Stone dual of $L_2$, $n=$Stone dual of $g)$. 

$\gamma$ is a duality thanks to the properties of Stone duality. \end{proof}

Note the following characterization of the functions $f$ involved in the duality:

\begin{lemma}\label{lemma:preserv} Let $f:L\to L'$ be a lattice homomorphism. The Stone dual of $f$ preserves closed sets if and only if for every prime ideal $P$ of $L$ and for every ideal $I$ of $L'$ we have,
\begin{equation} \label{eq:cl} P\supseteq f^{-1}(I)\Rightarrow P=f^{-1}(Q)\ for\ some\ prime\ Q\supseteq I.\end{equation}
\end{lemma}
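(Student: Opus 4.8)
The plan is to recall explicitly how Stone duality for bounded distributive lattices works, and then to translate the condition "the Stone dual of $f$ preserves closed sets" into the stated ideal-theoretic condition \eqref{eq:cl}. Recall that under Stone duality the spectral space attached to a lattice $L$ is $Spec(L)$, the set of prime ideals of $L$ with the Zariski topology whose basic opens are $O(a)=\{P : a\notin P\}$; and closed sets are exactly the sets of the form $V(I)=\{P : I\subseteq P\}$ for $I$ an ideal of $L$ (indeed every closed set is an intersection of complements of basic opens, and $\bigcap_{a\in I}V(a)=V(I)$, while $V(I)=V(\mathrm{ideal}(I))$). The Stone dual of $f:L\to L'$ is the continuous map $f^{*}:Spec(L')\to Spec(L)$, $f^{*}(P)=f^{-1}(P)$, which is easily checked to be well defined and spectral.

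\textbf{The forward direction.} First I would suppose $f^{*}$ preserves closed sets. Fix a prime ideal $P$ of $L$ and an ideal $I$ of $L'$ with $P\supseteq f^{-1}(I)$. Consider the closed set $V(I)\subseteq Spec(L')$; by hypothesis $f^{*}(V(I))$ is closed in $Spec(L)$, hence of the form $V(J)$ for some ideal $J$ of $L$. I claim $J=f^{-1}(I)$: on one hand $f^{*}(V(I))\subseteq V(f^{-1}(I))$ is immediate since $Q\supseteq I$ forces $f^{-1}(Q)\supseteq f^{-1}(I)$, so $J\supseteq f^{-1}(I)$ after taking $J=\bigcap f^{*}(V(I))$; on the other hand $J=\bigcap\{f^{-1}(Q): Q\in V(I)\}\subseteq f^{-1}(I)$ because $I=\bigcap V(I)$ in $L'$ (every ideal of a distributive lattice is the intersection of the primes above it). Hence $V(J)=V(f^{-1}(I))$, and since $P\supseteq f^{-1}(I)$ we get $P\in V(f^{-1}(I))=V(J)=f^{*}(V(I))$, which says precisely $P=f^{-1}(Q)$ for some prime $Q\supseteq I$. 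That is \eqref{eq:cl}.

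\textbf{The converse direction.} Conversely, assume \eqref{eq:cl}. I must show $f^{*}(V(I))$ is closed for every ideal $I$ of $L'$; I claim $f^{*}(V(I))=V(f^{-1}(I))$. The inclusion $\subseteq$ is the easy one noted above. For $\supseteq$, take $P\in V(f^{-1}(I))$, i.e. $P\supseteq f^{-1}(I)$; then \eqref{eq:cl} produces a prime $Q\supseteq I$ with $P=f^{-1}(Q)=f^{*}(Q)$, and $Q\in V(I)$, so $P\in f^{*}(V(I))$. Thus $f^{*}$ sends the generic closed set $V(I)$ to the closed set $V(f^{-1}(I))$, which is what "preserves closed sets" means.

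\textbf{Where the work is.} The one genuinely content-bearing fact used is that in a bounded distributive lattice every ideal is the intersection of the prime ideals containing it (equivalently, the prime ideal theorem / Stone's separation lemma), which gives $I=\bigcap V(I)$ and hence the identification $J=f^{-1}(I)$ in the forward direction; everything else is bookkeeping with preimages. I expect the main obstacle — such as it is — to be stating cleanly why $\bigcap f^{*}(V(I))$ equals $f^{-1}(I)$ rather than merely containing it, and making sure the map $f^{*}$ is genuinely the Stone dual as used elsewhere in the paper (so that "preserves closed sets" is unambiguous); no deep argument is needed beyond invoking the prime separation property of distributive lattices.
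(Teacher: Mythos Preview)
Your proof is correct and follows essentially the same route as the paper: both arguments reduce ``$f^{*}$ preserves closed sets'' to the identity $f^{*}(V(I))=V(f^{-1}(I))$ for every ideal $I$ of $L'$, observe that the inclusion $\subseteq$ is trivial while $\supseteq$ is exactly condition~\eqref{eq:cl}, and use $I=\bigcap V(I)$ (prime separation) to identify the relevant ideal. The paper reaches this identity via the closure-commuting characterization $m(\overline{X})=\overline{m(X)}$ of closed continuous maps and then simplifies algebraically, whereas you argue it directly; the content is the same.
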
 

\begin{proof} For a less cumbersome notation we may write $VI$ rather than $V(I)$. 

Denote by $\overline{X}$ the closure of a set in $Spec(L)$ or $Spec(L')$. Note that $\overline{X}=V\bigcap X$. Let $m:Spec(L')\to Spec(L)$ be the Stone dual of $f$. We note that $m$ preserves closed sets if and only if for every $X\subseteq Spec(L)$ we have 
$$(1){m}(\overline{X})=\overline{{m}(X)}$$
where ${m}(X)$ is the image of $X$ under $m$.

Now (1) is equivalent to 
 $$(2){m}(V\bigcap X)=V\bigcap{{m}(X)}$$
or also 
 $$(3){m}(V\bigcap X)=V\bigcap_{y\in X}m(y)$$
and since $m=f^{ -1}$ commutes with intersections we have
$$(4) {m}(V\bigcap X)=Vm(\bigcap_{y\in X}y)=Vm(\bigcap X)=Vf^{-1}(\bigcap X).$$
Now $I=\bigcap X$ is an ideal of $L'$, and all ideals have this form in the sense that $I=\bigcap V(I)$. So we can replace $\bigcap X$ with $I$ and for all ideals $I$ we have
$$(4) {m}(V(I))=V(f^{-1}(I)).$$
So (4) means that both members contain the same primes $P$ of $L$, so 
$$(5) P\in V(f^{-1}(I))\Rightarrow P\in {m}(V(I))$$
that is
$$(6) P\in V(f^{-1}(I))\Rightarrow P=m(Q),Q\supseteq I$$
or, replacing $m$ with $f^{-1}$, and by definition of $V$,
$$(7) P\supseteq f^{-1}(I)\Rightarrow P=f^{-1}(Q),Q\supseteq I$$

which is equivalent to the formula in the statement  (note that the converse of (7) is trivially true).\end{proof} 

\subsection{A new category of Priestley spaces}

A further functor, studied in \cite{Palermo}, goes from the category of MV-algebras to the category of Priestley spaces, which are certain partially ordered topological spaces. More precisely, a Priestley space is an ordered topological space $(X,\tau,\leq)$ such that $(X,\tau)$ is compact, and if $x$ is not less than $y$, then there is a clopen upset $U$ such that $x\in U$ and $y\notin U$. The Priestley dual of a lattice $L$ is the prime spectrum of $L$ with the inclusion order and patch topology, whose basic opens are $\{P\in Spec(L)|a\in P\}$ and  $\{P\in Spec(L)|a\notin P\}$ for $a\in L$.  On morphisms, the Priestley dual of a homomorphism $f:L\to L'$ is $f^{-1}:Spec(L')\to Spec(L)$. 

The idea in \cite{Palermo} is first to apply the functor $\beta$ from MV-algebras to bounded distributive lattices, where $\beta(A)$ is the Belluce lattice of $A$,  and then to apply Priestley duality. Note however that at the time of writing \cite{Palermo} there was no characterization of the range of the functor $\beta$, whereas now the situation is clarified, for instance, by \cite{DL}. 

In the previous subsections we introduced a lattice-related category LIC. In analogy we can introduce a category PIC (Priestley in context) related with Priestley spaces. Objects are triples $(\Xi_1,\Xi_2,g)$ where $\Xi_1$ is the Priestley dual of the Belluce lattice of a free MV-algebra, $\Xi_2$ is another Priestley space and $g: \Xi_2\to \Xi_1$ is an injective Priestley map which preverses closed upsets. 

Morphisms in PIC from $(\Xi_1,\Xi_2,g)$ to $(\Xi'_1,\Xi'_2,g')$ are pairs of Priestley maps which make the corresponding diagram commute. 

The following lemma relates LIC and PIC.

\begin{lemma} Let $L$ be a lattice. Closed upsets in the patch topology of $Spec(L)$ coincide with closed sets in the Zariski topology of $Spec(L)$. 
\end{lemma}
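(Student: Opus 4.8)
The plan is to characterize both families of closed sets via the prime ideals that contain a given ideal and show they coincide. Recall that a closed set in the Zariski topology of $Spec(L)$ is, by definition, a set of the form $V(I)=\{P\in Spec(L)\mid I\subseteq P\}$ for some ideal $I$ of $L$; and the patch topology has as subbasic closed sets the sets $V(a)=\{P\mid a\in P\}$ together with their complements $O(a)=\{P\mid a\notin P\}$. The key observation is that $V(I)$ is an upset with respect to inclusion: if $I\subseteq P$ and $P\subseteq Q$ then $I\subseteq Q$. So every Zariski-closed set is an upset, and since the patch topology refines the Zariski topology, every Zariski-closed set is patch-closed and an upset. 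This gives one inclusion immediately.

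For the reverse inclusion I would take an arbitrary patch-closed upset $F\subseteq Spec(L)$ and show it is Zariski-closed. First note that $F$, being patch-closed, is patch-compact (the patch topology on $Spec(L)$ is compact, being the Stone/Priestley space underlying $L$). The natural candidate for the defining ideal is $I=\bigcap_{P\in F}P$; one then has to verify $F=V(I)$. The inclusion $F\subseteq V(I)$ is trivial. For $V(I)\subseteq F$, suppose $Q\in Spec(L)$ with $I\subseteq Q$ but $Q\notin F$. Since $F$ is patch-closed, there is a basic patch-open neighbourhood of $Q$ disjoint from $F$; such a neighbourhood is a finite intersection of sets of the form $O(a)$ and $V(b)$. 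I would then use that $F$ is an upset: if $Q\notin F$ and $F$ is an upset, then in particular $Q$ is not above any member of $F$, so for each $P\in F$ there is some element witnessing $P\not\subseteq Q$, i.e. some $a_P\in P\setminus Q$. The family $\{O(a_P)\}_{P\in F}$... (here one must be careful about direction) covers $F$? Rather, I would argue as follows: since $Q$ does not belong to the upset $F$, and $F$ is patch-closed, standard Priestley/Stone separation gives a clopen upset (in the patch topology) $U$ with $F\subseteq U$ and $Q\notin U$; clopen upsets of the Priestley space are exactly the sets $O(a)=\{P\mid a\notin P\}$ for $a\in L$, so $a\in Q$ while $a\notin P$ for all $P\in F$, hence $a\notin\bigcap_{P\in F}P=I$, contradicting $I\subseteq Q$. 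Therefore no such $Q$ exists and $V(I)=F$, so $F$ is Zariski-closed.

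The main obstacle, and the point requiring care, is the separation step: I must invoke the correct characterization of clopen upsets of the Priestley dual of $L$ as precisely the sets $O(a)$, and the Priestley separation axiom in the form "a closed upset and a point not in it can be separated by a clopen upset". Both are standard facts of Priestley duality applied to the fact that $(Spec(L),\text{patch},\subseteq)$ is the Priestley dual of $L$, so the argument is essentially bookkeeping once these are cited; the only genuine content is recognizing that $V(I)$ is an upset and that $\bigcap_{P\in F}P$ is the ideal that recovers $F$. One should also double-check the degenerate cases $F=\emptyset$ (take $I=L$) and $F=Spec(L)$ (take $I=\{0\}$, assuming $0\in L$), which are handled by the same formula $I=\bigcap_{P\in F}P$ with the usual convention that an empty intersection is all of $L$.
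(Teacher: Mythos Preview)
Your overall strategy is sound and is the standard hands-on argument behind the fact the paper simply cites (the paper's own proof is a one-line appeal to the Priestley/spectral correspondence in \cite{Bez}). However, the separation step contains a genuine error that makes the argument collapse as written.

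You claim that the clopen \emph{upsets} of $(Spec(L),\text{patch},\subseteq)$ are the sets $O(a)=\{P\mid a\notin P\}$. This is false: with the inclusion order (which is the order the paper fixes), $O(a)$ is a \emph{downset}, not an upset, since $a\notin P$ and $P\subseteq Q$ does not force $a\notin Q$. The clopen upsets are the sets $V(a)=\{P\mid a\in P\}$. This mix-up then propagates to your contradiction: from $F\subseteq O(a)$ and $Q\notin O(a)$ you obtain $a\notin I$ and $a\in Q$, and you assert this contradicts $I\subseteq Q$. It does not; an element of $Q\setminus I$ is perfectly compatible with $I\subseteq Q$.

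The repair is immediate once the correct identification is used. Separating the closed upset $F$ from $Q$ by a clopen upset gives $U=V(a)$ with $F\subseteq V(a)$ and $Q\notin V(a)$; hence $a\in P$ for every $P\in F$, so $a\in I=\bigcap_{P\in F}P$, while $a\notin Q$. Now $I\subseteq Q$ forces $a\in Q$, a genuine contradiction. (To justify that the separating clopen upset can be taken of the form $V(a)$ rather than a finite union, note $V(a)\cup V(b)=V(a\wedge b)$, so finite unions of such sets are again of this form after the compactness step.) With this correction your argument is complete and gives an explicit proof of what the paper leaves as a citation.
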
 

\begin{proof} It follows from the well known isomorphism between the category of Priestley spaces and the category of spectral spaces, see \cite{Bez}.  
\end{proof} 

By this lemma, a map from $Spec(L)$ to $Spec(L')$ preserves closed sets in the Zariski topology of $Spec(L)$ if and only if it preserves closed upsets in the patch topology of $Spec(L)$. 

Now we can apply Priestley duality in the same way as we used Stone duality in Theorem \ref{thm:siclic} and we have:

\begin{theorem}\label{thm:Pries}  Priestley duality gives a duality between $LIC$ and $PIC$. 
\end{theorem}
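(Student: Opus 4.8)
The plan is to build the duality functor directly, mimicking the proof of Theorem \ref{thm:siclic} but with Priestley duality in place of Stone duality for lattices. First I would recall that Priestley duality is a contravariant equivalence between the category of bounded distributive lattices with lattice homomorphisms and the category of Priestley spaces with Priestley maps; in particular it is fully faithful and essentially surjective. The object assignment is as follows: to a LIC-object $(\beta(Free),L,f)$ one associates $(\Xi_1,\Xi_2,g)$ where $\Xi_1$ is the Priestley dual of $\beta(Free)$, $\Xi_2$ is the Priestley dual of $L$, and $g$ is the Priestley dual of $f$. On morphisms, a pair $(s,t)$ of lattice homomorphisms making a commuting square in LIC is sent to the pair of Priestley duals, which makes the corresponding square in PIC commute because dualization is a (contravariant) functor.

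The key verification is that this assignment actually lands in PIC, i.e. that the triple produced satisfies the defining conditions of a PIC-object. By Theorem \ref{thm:fond} (via the identification of $\beta(Free(k))$ with the lattice of cylinder polyhedra noted at the end of Section \ref{MV}), $\Xi_1$ is indeed the Priestley dual of the Belluce lattice of a free MV-algebra — this is by definition of PIC-objects. Since $f$ is an epimorphism of lattices, its Priestley dual $g$ is injective; this is a standard fact from Priestley duality (epimorphisms dualize to embeddings of spaces). The only substantive point is that $g$ preserves closed upsets: this is exactly where the hypothesis ``the Stone dual of $f$ preserves closed sets'' in the definition of LIC is used, together with the Lemma just proved that closed upsets in the patch topology of $Spec(L)$ coincide with closed sets in the Zariski topology of $Spec(L)$. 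So I would argue: the Priestley dual and the Stone dual of $f$ are ``the same map'' on underlying sets (both are $f^{-1}$ on prime spectra), the Zariski-closed sets and the patch-closed upsets agree on each side by the Lemma, hence $f^{-1}$ preserves Zariski-closed sets iff it preserves patch-closed upsets. Therefore $g$ preserves closed upsets, and the triple is a genuine PIC-object.

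Conversely, one must check that every PIC-object arises this way and that the functor is full and faithful, so that it is a duality. Essential surjectivity: given $(\Xi_1,\Xi_2,g)$ in PIC, let $L'$ and $L$ be the bounded distributive lattices with $\Xi_1,\Xi_2$ as Priestley duals; then $L'\cong\beta(Free)$ for an appropriate free MV-algebra by the definition of $\Xi_1$, and $g$ dualizes to a lattice homomorphism $f:L'\to L$ which is an epimorphism (since $g$ is injective) whose Stone dual preserves closed sets (running the equivalence ``closed upsets $=$ Zariski-closed sets'' backwards), so $(L',L,f)$ is a LIC-object dualizing to the given PIC-object up to isomorphism. Fullness and faithfulness on morphisms are immediate from fullness and faithfulness of Priestley duality on lattice homomorphisms, applied componentwise to the pairs $(s,t)$: a morphism of PIC-objects is precisely a pair of Priestley maps making a square commute, which corresponds bijectively to a pair of lattice homomorphisms making the dual square commute.

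I expect the main obstacle to be purely bookkeeping rather than conceptual: one has to be careful that ``Priestley map'' in the definition of PIC is the right notion so that Priestley duals of lattice homomorphisms are exactly the PIC-morphisms, and that the commuting-square condition transports correctly under a contravariant functor (the direction of $g$ versus $f$ must match the conventions fixed in the definitions of LIC and PIC). Once the compatibility of the two closed-set notions via the cited isomorphism between Priestley spaces and spectral spaces (\cite{Bez}) is in hand, the rest follows formally, exactly as in the proof of Theorem \ref{thm:siclic}, so the proof can be kept short by invoking that the whole argument is parallel ``with Priestley duality in place of Stone duality.''
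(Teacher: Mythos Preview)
Your proposal is correct and takes essentially the same approach as the paper: the paper's own ``proof'' is just the one-line remark that one applies Priestley duality exactly as Stone duality was applied in Theorem~\ref{thm:siclic}, and your write-up simply fills in the details of that sketch, leaning on the preceding Lemma identifying Zariski-closed sets with patch-closed upsets.
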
 

In order to summarize the previous results 
we draw the following diagram, where MV is the category of MV-algebras, FG is the category of ``MV-spaces'' contained in \cite{FG95} and DP the category again of ``MV-spaces'' in \cite{DP}, and $\longleftrightarrow^e$ denotes an equivalence, and $\longleftrightarrow$ a duality, and $\alpha$ is the functor described in Theorem \ref{thm:sic}.

$$\xymatrix{& DP  \arrow[r] & MV\arrow[l]\arrow[d]^\alpha\arrow[r] & FG\arrow[l]^e \\
& &  SIC\arrow[d] & \\
& & LIC\arrow[u]\arrow[d] &\\
&& PIC\arrow[u]}$$

\section{More on lattice homomorphisms}\label{sec:lathom}

In the previous sections we used repeatedly lattices and their maps (homomorphisms). In this section we expand on lattice homomorphisms and we give a  characterization of closed surjective lattice homomorphisms in the sense of \cite{W19}, which play an important role in \cite{DL}. 

\subsection{A characterization of closed surjective homomorphisms}

In this subsection we give a simple characterization of closed surjective lattice homomorphisms in the sense of \cite{DL} and \cite{W19}. 

If $S$ is a subset of a lattice we denote by $DS$  the downset of  $S$. If $B,C$ are subsets of a lattice we denote also $B\vee C=\{b\vee c|b\in B,c\in C\}$. 

\begin{theorem}\label{thm:closed} A surjective homomorphism $f:L\to L'$ between bounded distributive lattices is closed if and only if for every $b,c\in L'$ we have  

$$f^{-1}(Db\vee Dc)\subseteq  f^{-1}(Db)\vee  f^{-1}(Dc)$$
  
\end{theorem}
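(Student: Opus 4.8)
First I need to pin down the definition of "closed" for a surjective lattice homomorphism as used in \cite{DL} and \cite{W19}. By Lemma~\ref{lemma:preserv} together with the discussion in Section~\ref{sec:lathom}, a surjective $f\colon L\to L'$ is closed precisely when its Stone dual $m=f^{-1}\colon\mathrm{Spec}(L')\to\mathrm{Spec}(L)$ preserves closed sets, which by that lemma is equivalent to the implication
$$P\supseteq f^{-1}(I)\ \Rightarrow\ P=f^{-1}(Q)\text{ for some prime }Q\supseteq I$$
for every prime ideal $P$ of $L$ and every ideal $I$ of $L'$. Since $f$ is surjective, primes of the form $f^{-1}(Q)$ are exactly the primes of $L$ containing $\ker f=f^{-1}(\{0\})$; and since $L'\cong L/\!\!\sim$ is a quotient, the condition "$P=f^{-1}(Q)$ for some prime $Q\supseteq I$" says: $P\supseteq f^{-1}(\{0\})$ and $f(P)$ (a prime ideal of $L'$) contains $I$. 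So the whole content to be matched is a condition about which primes of $L$ arise as preimages, translated into an inequality between finitely-generated ideals.

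The strategy for the nontrivial direction (closed $\Rightarrow$ the inclusion) is contrapositive: suppose for some $b,c\in L'$ there is $a\in f^{-1}(Db\vee Dc)$ with $a\notin f^{-1}(Db)\vee f^{-1}(Dc)$. Then $f(a)\le b\vee c$ but $a$ is not below any $b'\vee c'$ with $f(b')\le b$, $f(c')\le c$. Consider the ideal $J=f^{-1}(Db)\vee f^{-1}(Dc)$ of $L$; it does not contain $a$, so by the prime ideal theorem there is a prime $P\ni J$ with $a\notin P$. I then take $I$ to be the ideal of $L'$ generated by... the image under $f$ of a suitable complement, arranged so that $f^{-1}(I)\subseteq P$ (because $f^{-1}(Db),f^{-1}(Dc)\subseteq J\subseteq P$ forces $b,c\notin f(P)$, i.e.\ some ideal sits inside $P$) but no prime $Q$ of $L'$ with $Q\supseteq I$ satisfies $f^{-1}(Q)=P$; the obstruction is exactly that $f(a)\le b\vee c$ would put $a$ into such an $f^{-1}(Q)$. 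Unwinding, this contradicts the closedness characterization from Lemma~\ref{lemma:preserv}. For the converse direction (the inclusion $\Rightarrow$ closed), I start from $P\supseteq f^{-1}(I)$, so $f(P)\supseteq I$; I must produce a prime $Q\supseteq I$ of $L'$ with $f^{-1}(Q)=P$. The natural candidate is $Q=f(P)$ (an ideal of $L'$ since $f$ is surjective and $P\supseteq\ker f$, using the lattice-homomorphism version of the ideal lemma from Section~\ref{Spectra}); one checks $Q$ is prime and $f^{-1}(Q)=P$ using surjectivity, and the displayed inclusion is what guarantees $f(P)$ is genuinely prime: if $b\wedge c\in f(P)$ but $b,c\notin f(P)$, pick preimages and use that $f^{-1}(Db\vee Dc)\subseteq f^{-1}(Db)\vee f^{-1}(Dc)$ to land $a$ in $P$ with $f(a)=b\wedge c$... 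I will instead phrase this as: the inclusion is exactly the statement that $f(P)$ is an ideal whose failure to be prime is detected downstairs, so primality of $f(P)$ is automatic.

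**Main obstacle.**
The delicate point is the bookkeeping in the contrapositive: choosing the ideal $I\subseteq L'$ so that simultaneously $f^{-1}(I)\subseteq P$ and the only prime $Q\supseteq I$ that could have $f^{-1}(Q)=P$ is blocked by $f(a)\le b\vee c$. A clean way is to take $I=D(f(a))\cap(\text{something})$, or better, to argue directly with $f(P)$: set $Q_0=f(P)$; then $f^{-1}(Q_0)=P$ automatically, so the only way closedness can be violated is if $Q_0$ fails to be prime, and $b\wedge c$ (with $b,c\notin Q_0$ arising from the chosen $a$) witnesses exactly that. So the real work is verifying that $a\notin f^{-1}(Db)\vee f^{-1}(Dc)$ lets us find such a $P$ with $f(a)\notin f(P)$ while $b,c\in$ the appropriate sets — this is where the distributivity of $L$ and $L'$ and the prime ideal separation theorem combine, and it is the step I expect to require the most care.
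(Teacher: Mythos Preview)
Your starting point is off: the definition of ``closed'' in \cite{W19} that the paper works with is the elementary condition
\[
f(a_0)\le f(a_1)\vee c \ \Longrightarrow\ \exists x\in L:\ a_0\le a_1\vee x\text{ and }f(x)\le c,
\]
not the condition ``the Stone dual of $f$ preserves closed sets''. Lemma~\ref{lemma:preserv} characterizes the latter, not the former, and no equivalence between the two is established prior to the theorem you are proving. In fact the paper derives that equivalence only \emph{after} Theorem~\ref{thm:closed}, via Corollary~\ref{cor:ideals} and the theorem in Section~6.2. So your plan is circular: you are assuming a reformulation of ``closed'' whose proof in the paper depends on the very statement you are trying to establish.

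Even setting that aside, your route through prime ideals, the prime separation theorem, and the primality of $f(P)$ is much heavier machinery than the paper needs and your sketch leaves the key step (``the displayed inclusion is what guarantees $f(P)$ is genuinely prime'') unjustified; it is not clear how a condition on principal downsets $Db,Dc$ controls primality against arbitrary witnesses $b\wedge c\in f(P)$. The paper's proof is instead a short syntactic unwinding of Wehrung's definition: rewrite ``$f(a_0)\le f(a_1)\vee c$'' as $a_0\in f^{-1}(D(f(a_1)\vee c))$, rewrite the conclusion as $a_0\in D(a_1\vee f^{-1}(Dc))$, use surjectivity to replace $f(a_1)$ by an arbitrary $b$, and then apply two elementary lemmas---$D(b\vee c)=Db\vee Dc$ by distributivity, and $f^{-1}(Db)$ is already a downset---to arrive directly at the displayed inclusion. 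No prime ideals or Stone duality are involved.
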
 

\begin{proof} Remember from \cite{W19} that $f:L\to L'$ is closed whenever:

if 1) $f(a_0)\leq f(a_1)\vee c$

then 2) there is $x\in L$ such that $a_0\leq a_1\vee x$ and $f(x)\leq c$.

\medskip

In terms of downsets, closedness is equivalent to:

$$f^{-1}(D(f(a_1)\vee c)\subseteq \{y|\exists x\,y\leq a_1\vee x\ AND\ f(x)\leq c\}$$

or equivalently, using a union at the right hand side,

$$f^{-1}(D(f(a_1)\vee c))\subseteq \bigcup_{x\in f^{-1}(Dc)}D(a_1\vee x)$$

if $f$ is surjective, then every element $b\in L'$ is $f(a_1)$ for some $a_1\in L$, so we can quantify over $b\in L'$:

$$f^{-1}(D(b\vee c))\subseteq \bigcup_{x\in f^{-1}(Dc)}\bigcup_{a_1\in f^{-1}(b)}D(a_1\vee x)$$

or more compactly 

$$f^{-1}(D(b\vee c))\subseteq \bigcup_{x\in f^{-1}(Dc)}D( f^{-1}(b)\vee x)$$

$$f^{-1}(D(b\vee c))\subseteq D( f^{-1}(b)\vee  f^{-1}(Dc))$$

The next lemma is a property of distributive lattices:

\begin{lemma} $D(b\vee c)=Db\vee Dc$.\end{lemma}

\begin{proof} Suppose $x\in D(b\vee c)$. Then $x\leq b\vee c$, so $x=x\wedge (b\vee c)=(x\wedge b)\vee(x\wedge c)$, so $x\in Db\vee Dc$. 
The converse inclusion is obvious. 
\end{proof} 

Passing to the union we derive, for every two sets $B,C$: 

\begin{corollary} $D(B\vee C)=DB\vee DC$.
\end{corollary} 

 Using the previous corollary we have 

$$f^{-1}(Db\vee Dc)\subseteq D( f^{-1}(b))\vee D f^{-1}(Dc)$$
 
Now a key passage is:

\begin{lemma} $D( f^{-1}(Db))=f^{-1}(Db)$. That is, $f^{-1}(Db)$ is a downset. 
\end{lemma} 

\begin{proof} let $x\in f^{-1}(Db)$. Let $y\leq x$. Then $f(x)\leq b$ and $f(y)\leq b$, so $y\in f^{-1}(Db)$.
\end{proof}

By the previous results the inclusion becomes: 

$$f^{-1}(Db\vee Dc)\subseteq  f^{-1}(Db)\vee  f^{-1}(Dc).$$

\end{proof} 

Note that the characterization is given exclusively in terms of downsets and inverse images. Moreover the downset of an element $b\in L'$ is the intersection of all prime ideals containing $b$, so we can say that the condition is given in terms of prime ideals and inverse images. 

Actually the proof extends to arbitrary downsets rather than only principal downsets, so we have: 

\begin{corollary} A surjective homomorphism $f:L\to L'$ between bounded distributive lattices is closed if and only if for every $B,C\subseteq L'$ we have  

$$f^{-1}(DB\vee DC)\subseteq  f^{-1}(DB)\vee  f^{-1}(DC)$$
\end{corollary}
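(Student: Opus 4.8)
The plan is to obtain the corollary as essentially a free consequence of the proof of Theorem~\ref{thm:closed}, by checking that each step there used only formal properties of downsets that already hold for arbitrary (not necessarily principal) downsets. Concretely, I would first restate the closedness condition of \cite{W19} in the equivalent ``downset'' form derived in the theorem, namely that closedness of a surjective $f$ is equivalent to
$$f^{-1}(D(b\vee c))\subseteq \bigcup_{x\in f^{-1}(Dc)} D(f^{-1}(b)\vee x)$$
for all $b,c\in L'$. The only place in the theorem's proof where the hypothesis ``$b$ and $c$ are elements'' is used is the passage from $f(a_1)\vee c$ to ``$b\vee c$ for $b\in L'$'' via surjectivity; everything afterwards (the identity $D(b\vee c)=Db\vee Dc$, its set-indexed upgrade $D(B\vee C)=DB\vee DC$, and the fact that $f^{-1}(Db)$ is a downset) is purely lattice-theoretic.

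So the first step is to record that, by the Corollary $D(B\vee C)=DB\vee DC$ stated right before this result, we already have the set-indexed version of the key identity available for free; and that the Lemma ``$f^{-1}(Db)$ is a downset'' goes through verbatim with $Db$ replaced by any downset $DB$, since $f^{-1}$ of a downset is always a downset (if $y\le x$ and $f(x)\in DB$ then $f(y)\le f(x)$ so $f(y)\in DB$). The second step is to note that closedness, when written out for a single pair of elements, already quantifies universally over $b,c\in L'$; testing it against a pair of downsets $B,C$ amounts to testing it against all pairs $(b,c)$ with $b\in B$, $c\in C$ simultaneously, and since the right-hand side $f^{-1}(DB)\vee f^{-1}(DC)$ is monotone in $B$ and $C$ and $DB=\bigcup_{b\in B}Db$, the inclusion for arbitrary $B,C$ follows from the inclusion for all principal $b,c$ by taking unions, and conversely the principal case is the special case $B=\{b\}$, $C=\{c\}$. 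In other words the two conditions are literally equivalent once one observes $D B\vee D C=\bigcup_{b\in B, c\in C} (Db\vee Dc)$ and that $f^{-1}$ commutes with arbitrary unions.

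The third step is simply to run the same chain of inclusions as in the theorem but starting from $f^{-1}(D(B\vee C))$: using the Corollary we rewrite $D(B\vee C)=DB\vee DC$, then bound $f^{-1}(DB\vee DC)\subseteq D(f^{-1}(DB))\vee D(f^{-1}(DC))$ exactly as before, and finally collapse the outer $D$'s using that $f^{-1}(DB)$ and $f^{-1}(DC)$ are downsets, landing on $f^{-1}(DB\vee DC)\subseteq f^{-1}(DB)\vee f^{-1}(DC)$. I do not expect any genuine obstacle here: the only thing to be careful about is the bookkeeping in the equivalence between the ``$\bigcup_{x}$'' form of closedness and the compact ``$D(f^{-1}(b)\vee f^{-1}(Dc))$'' form, which must be re-examined with $b,c$ replaced by $B,C$ — but this is exactly the same manipulation as in the theorem, and the potential subtlety (that $f^{-1}(b)$ for a single $b$ need not be a downset, whereas $f^{-1}(Db)$ is) is already handled by the theorem's own use of $f^{-1}(Dc)$ rather than $f^{-1}(c)$ on the indexing side. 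Hence the corollary follows, and in fact the principal case of Theorem~\ref{thm:closed} and the general case of the corollary are equivalent statements.
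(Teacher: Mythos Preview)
Your proposal is correct, and the paper's own proof is precisely your ``second step'': it picks $x\in f^{-1}(DB\vee DC)$, uses $DB\vee DC=\bigcup_{b\in B,\,c\in C}(Db\vee Dc)$ to find specific $b,c$ with $x\in f^{-1}(Db\vee Dc)$, applies Theorem~\ref{thm:closed} to that pair, and then uses $f^{-1}(Db)\vee f^{-1}(Dc)\subseteq f^{-1}(DB)\vee f^{-1}(DC)$; the converse is the principal case $B=\{b\}$, $C=\{c\}$. Your ``third step'' (re-running the entire chain of inclusions from the theorem with $B,C$ in place of $b,c$) would also work but is redundant once the reduction to the principal case is in hand.
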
 

\begin{proof} Suppose $f$ is closed. Let $x\in f^{-1}(DB\vee DC)$. Then $f(x)\in DB\vee DC$, so for some $c\in C,b\in B$ we have $f(x)\in Db\vee Dc$, 
so $x\in f^{-1}(Db\vee Dc)$, 
and  
by the theorem, $x\in f^{-1}(Dc)\vee  f^{-1}(Db)$ so $x\in f^{-1}(DC)\vee  f^{-1}(DB)$. The converse inclusion is obvious. \end{proof} 

Note that the inclusion can be replaced with equality: 
\begin{corollary}\label{cor:equality} A surjective homomorphism $f:L\to L'$ between bounded distributive lattices is closed if and only if for every $B,C\subseteq L'$ we have  

$$f^{-1}(DB\vee DC)=  f^{-1}(DB)\vee  f^{-1}(DC).$$
\end{corollary}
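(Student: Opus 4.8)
The plan is to derive Corollary \ref{cor:equality} directly from the previous corollary (the one asserting the inclusion $f^{-1}(DB\vee DC)\subseteq f^{-1}(DB)\vee f^{-1}(DC)$ characterizes closedness). The point is that for a \emph{surjective} lattice homomorphism $f$ the reverse inclusion $f^{-1}(DB)\vee f^{-1}(DC)\subseteq f^{-1}(DB\vee DC)$ always holds, with no hypothesis on $f$ beyond being a homomorphism. Hence ``inclusion'' and ``equality'' are equivalent reformulations of the same condition, and the corollary is immediate.

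Concretely, I would argue the trivial inclusion as follows. Let $z\in f^{-1}(DB)\vee f^{-1}(DC)$. By definition of $\vee$ on subsets, $z=x\vee y$ with $x\in f^{-1}(DB)$ and $y\in f^{-1}(DC)$; that is, $f(x)\le b$ for some $b\in B$ and $f(y)\le c$ for some $c\in C$. Since $f$ is a lattice homomorphism it preserves joins, so $f(z)=f(x)\vee f(y)\le b\vee c\in B\vee C$, whence $f(z)\in D(B\vee C)=DB\vee DC$ (using the Corollary $D(B\vee C)=DB\vee DC$ proved above, or simply observing $b\vee c\in DB\vee DC$). Therefore $z\in f^{-1}(DB\vee DC)$, which is the reverse inclusion.

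Combining this with the previous corollary gives the equivalence: $f$ is closed iff $f^{-1}(DB\vee DC)\subseteq f^{-1}(DB)\vee f^{-1}(DC)$ for all $B,C$, iff (adding the always-valid reverse inclusion) $f^{-1}(DB\vee DC)= f^{-1}(DB)\vee f^{-1}(DC)$ for all $B,C$. There is essentially no obstacle here; the only thing to be careful about is invoking the correct ambient facts, namely that $f$ preserves finite joins (so the image of a join lies below the join of images) and that downsets behave well under the $\vee$-of-subsets operation, both of which are already established in this section. I would write the proof in two or three sentences, citing the previous corollary and noting that the reverse containment is automatic by preservation of joins.

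\begin{proof} By the previous corollary, $f$ is closed if and only if $f^{-1}(DB\vee DC)\subseteq f^{-1}(DB)\vee f^{-1}(DC)$ for all $B,C\subseteq L'$, so it suffices to show the reverse inclusion always holds for a lattice homomorphism $f$. Let $z\in f^{-1}(DB)\vee f^{-1}(DC)$, say $z=x\vee y$ with $f(x)\le b\in B$ and $f(y)\le c\in C$. Then $f(z)=f(x)\vee f(y)\le b\vee c$, and $b\vee c\in DB\vee DC=D(B\vee C)$, so $f(z)\in D(B\vee C)$, i.e.\ $z\in f^{-1}(DB\vee DC)$. Hence closedness is equivalent to the stated equality. \end{proof}
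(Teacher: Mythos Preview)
Your proof is correct and follows essentially the same strategy as the paper's: show that the reverse inclusion $f^{-1}(DB)\vee f^{-1}(DC)\subseteq f^{-1}(DB\vee DC)$ holds automatically for any lattice homomorphism, then combine with the previous corollary. The only difference is in how the reverse inclusion is justified---the paper argues that $f^{-1}(DB\vee DC)$ is an ideal containing both $f^{-1}(DB)$ and $f^{-1}(DC)$, whereas you give a direct element-level computation using that $f$ preserves joins; both reach the same conclusion.
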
 

\begin{proof} The first inclusion follows from the previous corollary. Let us prove the second. We note that $f^{-1}(DC\vee DB)$ is an ideal, since $DC\vee DB$ is an ideal and $f^{-1}$ preserves ideals. So from the clear inclusions $f^{-1}(DB)\subseteq f^{-1}(DC\vee DB)$ and $f^{-1}(DC)\subseteq f^{-1}(DC\vee DB)$ it follows $f^{-1}(DB)\vee  f^{-1}(DC)\subseteq   f^{-1}(DB\vee DC)$.

\end{proof} 

\begin{corollary}\label{cor:ideals} A surjective homomorphism $f:L\to L'$ between bounded distributive lattices is closed if and only if for every two ideals $I,J$ of $L'$ we have  

$$(*) f^{-1}(I\vee J)=  f^{-1}(I)\vee  f^{-1}(J).$$
\end{corollary}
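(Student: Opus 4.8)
The plan is to deduce Corollary~\ref{cor:ideals} from Corollary~\ref{cor:equality} by showing that the two statements are in fact the same, once one observes that every ideal of a bounded distributive lattice is a downset of itself. More precisely, for an ideal $I$ of $L'$ we have $DI=I$ (an ideal is downward closed), and moreover $I\vee J=DI\vee DJ$ whenever $I,J$ are ideals, because $I\vee J=\{i\vee j\mid i\in I,\ j\in J\}$ is already an ideal (it is the ideal generated by $I\cup J$, using distributivity to see closure under $\vee$ and downward closure). Hence the equation $(*)$ in the statement is literally the equation of Corollary~\ref{cor:equality} applied to the subsets $B=I$, $C=J$.

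So the forward direction is immediate: if $f$ is closed, then by Corollary~\ref{cor:equality} we have $f^{-1}(DI\vee DJ)=f^{-1}(DI)\vee f^{-1}(DJ)$, and substituting $DI=I$, $DJ=J$ gives $(*)$. For the converse, I would argue that $(*)$ holding for all ideals implies the principal-downset inclusion of Theorem~\ref{thm:closed}: given $b,c\in L'$, apply $(*)$ to the principal ideals $I=Db$ and $J=Dc$; then $I\vee J=Db\vee Dc=D(b\vee c)$ by the lemma inside the proof of Theorem~\ref{thm:closed}, and $(*)$ yields $f^{-1}(Db\vee Dc)=f^{-1}(Db)\vee f^{-1}(Dc)$, in particular the inclusion $\subseteq$ that Theorem~\ref{thm:closed} requires. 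Thus $f$ is closed.

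The only genuinely nontrivial point — and the one I would be careful to spell out — is that $I\vee J$ (defined elementwise as $\{i\vee j\}$) coincides with the ideal generated by $I\cup J$, so that it is genuinely an ideal and equals its own downset; this is exactly the distributivity computation already used implicitly in the proof of Corollary~\ref{cor:equality}, and it is where boundedness/distributivity of the lattice is used. Everything else is bookkeeping. I would therefore keep the proof to one or two lines, simply recording that $(*)$ is the special case $B=I$, $C=J$ of Corollary~\ref{cor:equality} together with $DI=I$ and $DJ=J$, and that the principal case $I=Db$, $J=Dc$ recovers the defining inclusion of Theorem~\ref{thm:closed}.

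\begin{proof} This is merely a restatement of Corollary~\ref{cor:equality}. If $I$ is an ideal of $L'$, then $DI=I$ since ideals are downsets; and for ideals $I,J$ the set $I\vee J=\{i\vee j\mid i\in I, j\in J\}$ is again an ideal (by distributivity it is closed under $\vee$, and it is clearly a downset), namely the ideal generated by $I\cup J$, so $I\vee J=DI\vee DJ$. Hence $(*)$ is exactly the equation of Corollary~\ref{cor:equality} for the subsets $B=I$, $C=J$, and the forward implication follows. Conversely, assume $(*)$ for all ideals of $L'$. Given $b,c\in L'$, apply $(*)$ with $I=Db$ and $J=Dc$; since $Db\vee Dc=D(b\vee c)$, we obtain $f^{-1}(D(b\vee c))=f^{-1}(Db)\vee f^{-1}(Dc)$, so in particular $f^{-1}(Db\vee Dc)\subseteq f^{-1}(Db)\vee f^{-1}(Dc)$, which by Theorem~\ref{thm:closed} means $f$ is closed. \end{proof}
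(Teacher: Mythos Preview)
Your proof is correct and follows essentially the same route as the paper: both arguments exploit the sandwich ``principal downsets $\subseteq$ ideals $\subseteq$ downsets'' together with Theorem~\ref{thm:closed} and Corollary~\ref{cor:equality}. The paper compresses this into one sentence, while you spell out the two directions separately; your aside that $I\vee J$ is already an ideal is correct but not actually needed for the equality $I\vee J=DI\vee DJ$, which follows immediately from $DI=I$ and $DJ=J$.
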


\begin{proof} By the previous results, $f$ is closed if and only if (*) holds for every downset, or equivalently, for every principal downset. But every ideal is a downset and every principal downset is an ideal, so $f$ is closed if and only if (*) holds for ideals. \end{proof}

\subsection{Characterization of duals of closed maps}

We can characterize in a simple way Stone (or Priestley)  dual maps of closed epimorphisms between lattices. 

\begin{theorem} Let $L,L'$ be distributive lattices and $g:Spec(L')\to Spec(L)$. Then $g$ is the Stone (or Priestley) dual of a closed epimorphic map $f$ from $L$ to $L'$ if and only if:
\begin{itemize}
\item  $g$ commutes with arbitrary unions and intersections;
\item $g$ sends ideals to ideals;
\item for every $C,D$ closed subsets of $Spec(L')$ we have the equality

$$(1) \overline{{g}(C\cap D)}=\overline{{g}(C)}\cap\overline{{g}(D)}.$$

where $\overline{X}$ is the topological closure of a set $X$ and ${g}(X)$ is the image of $X$ under $g$.\end{itemize}
\end{theorem}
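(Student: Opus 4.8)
\emph{Proof idea.} The plan is to translate the three conditions on $g$ into algebraic statements about a homomorphism $f:L\to L'$ with $g=f^{-1}$, and then to invoke Corollary~\ref{cor:ideals}. Two preliminary facts carry the argument: (a) the first two bullets say exactly that $g$ is the inverse-image map $f^{-1}$ of a unique bounded distributive lattice homomorphism $f:L\to L'$; and (b) $\overline{g(V(I))}=V(f^{-1}(I))$ for every ideal $I$ of $L'$.

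For (a), in the direction that needs work one reconstructs $f$ from $g$ (read as a map of power sets, as is needed to make sense of the first bullet): commutation with the empty union and the empty intersection gives $g(\emptyset)=\emptyset$ and $g(L')=L$, commutation with binary intersections forces the sets $g(\{P\})$ to be pairwise disjoint, and commutation with arbitrary unions forces them to cover $L$; hence $g=f^{-1}$ for the function $f$ sending $a$ to the unique point $P$ with $a\in g(\{P\})$. That $f$ is a bounded lattice homomorphism then follows from the second bullet and from $g$ taking values in $Spec(L)$: monotonicity of $f$ (which is what ``$g$ sends ideals to ideals'' amounts to) gives $f(a\vee b)\ge f(a)\vee f(b)$, and applying the second bullet to the principal ideal $D(f(a)\vee f(b))$, using that every ideal is the intersection of the primes above it, gives the reverse inequality; dually, primeness of each $f^{-1}(P)$ yields $f(a)\wedge f(b)\le f(a\wedge b)$, while $f(0)=0$ and $f(1)=1$ because each $f^{-1}(P)$ is a proper ideal. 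The converse implication in (a) is routine.

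For (b), $\overline{g(V(I))}=V\big(\bigcap\{f^{-1}(P):P\ \text{prime},\,I\subseteq P\}\big)=V\big(f^{-1}\big(\bigcap\{P:P\ \text{prime},\,I\subseteq P\}\big)\big)=V(f^{-1}(I))$, using that $f^{-1}$ commutes with intersections and that $I=\bigcap\{P\ \text{prime}:I\subseteq P\}$. Now the closed subsets of $Spec(L')$ are precisely the $V(I)$ with $I$ an ideal, $V(I)\cap V(J)=V(I\vee J)$, and $I\mapsto V(I)$ is an anti-isomorphism onto the closed sets; hence the third bullet is equivalent, via (b), to $V(f^{-1}(I\vee J))=V(f^{-1}(I)\vee f^{-1}(J))$, i.e. to $f^{-1}(I\vee J)=f^{-1}(I)\vee f^{-1}(J)$ for all ideals $I,J$ of $L'$. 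By Corollary~\ref{cor:ideals}, applicable because $f$ is an epimorphism, equivalently $g$ is injective, this is precisely the assertion that $f$ is closed. This gives the ``if'' direction; the ``only if'' direction is the same computation read backwards, starting from a closed epimorphism $f$ and its Stone dual $g=f^{-1}$. The Priestley statement reduces to the Stone one by the lemma preceding the theorem, since closed upsets in the patch topology of $Spec$ coincide with Zariski-closed sets.

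The main obstacle is step (a): showing that the first two conditions, together with $g$ being valued in $Spec(L)$, upgrade $f$ from a merely monotone map to an honest lattice homomorphism. This is exactly where the prime ideal theorem and the passage between an ideal and the primes lying above it are indispensable, and it is also the point at which one must be careful about reading injectivity of $g$ (equivalently, surjectivity of $f$) into the hypotheses, since Corollary~\ref{cor:ideals} is phrased for surjective homomorphisms.
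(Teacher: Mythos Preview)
Your approach is essentially the paper's: reduce the topological condition (1) to the ideal-theoretic identity $f^{-1}(I\vee J)=f^{-1}(I)\vee f^{-1}(J)$ via the computation $\overline{g(V(I))}=V(f^{-1}(I))$, and then invoke Corollary~\ref{cor:ideals}. Your step~(a), reconstructing $f$ from $g$ and checking it is a bounded lattice homomorphism, is considerably more careful than the paper, which handles the first two bullets in a single sentence (``$g$ must commute with unions and intersections since it coincides with $f^{-1}$'') and does not discuss the reconstruction of $f$ at all.

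Your caveat about surjectivity is well placed, and it applies equally to the paper's argument: neither proof derives surjectivity of $f$ from the three bullets, yet Corollary~\ref{cor:ideals} is stated only for surjective homomorphisms, and the conclusion of the theorem is that $f$ is epimorphic. In fact the three bullets by themselves do not force surjectivity. Take $L=\{0,1\}$, $L'=\{0,1,2\}$ (chains) and $f$ the inclusion $0\mapsto 0$, $1\mapsto 2$. Then $g=f^{-1}$ commutes with all unions and intersections, sends ideals to ideals, and condition~(1) holds (the closed subsets of $Spec(L')$ form a three-element chain, so no nontrivial intersections arise); yet there is no surjection $\{0,1\}\to\{0,1,2\}$, hence $g$ is not the dual of any epimorphism $L\to L'$. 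So the ``if'' direction, as stated, seems to require an extra hypothesis such as injectivity of $g$ on $Spec(L')$. You are right to flag this; the paper does not.
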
 

\begin{proof} $g$ must commute with unions and intersections since it coincides with $f^{-1}$. Moreover (1) follows from a few calculations. In fact, first we note that for every set $X\subseteq Spec(L')$ we have $\overline{X}=V(\bigcap X)$. So the equality is equivalent to 

$$ (2) V\bigcap{{g}(C\cap D)}=V\bigcap{\hat{g}(C)}\cap V\bigcap{{g}(D)}.$$

By definition of $\hat{g}$ (2) becomes

$$ (3) V\bigcap_{P\in C\cap D}g(P)=V\bigcap_{P\in C}g(P)\cap V\bigcap_{P\in  D}g(P)$$

and by the properties of $V$ this becomes

$$ (4) V\bigcap_{P\in C\cap D}g(P)=V\bigcap_{P\in C}g(P)\vee\bigcap_{P\in  D}g(P)$$

where $X\vee Y$ is the ideal generated by $X\cup Y$. 

Since $C,D$ are closed we have $C=VI$ and $D=VJ$ for some ideals $I,J$, so (4) becomes

$$ (5) V\bigcap_{P\in VI\cap VJ}g(P)=V\bigcap_{P\in VI}g(P)\vee\bigcap_{P\in  VJ}g(P)$$

and since $g$ commutes with intersections (5) becomes 

$$ (6) Vg(\bigcap V(I\vee J)=V(g\bigcap VI\vee g\bigcap VJ)$$

Now we take intersections of both sides: 

$$ (7) \bigcap Vg(\bigcap V(I\vee J)=\bigcap V(g\bigcap VI\vee g\bigcap VJ)$$

Now for every ideal $X$, we have $X= \bigcap V(X)$, and since $g$ preserves ideals, the sets $g(I)$, $g(J)$ and  $g(I\vee J)$ are ideals. So we can simplify (7) as follows: 

$$ (8) g(I\vee J)=g(I)\vee g(J).$$

So (1) holds for every two closed sets $C,D$ if and only if (8) holds for every two ideals $I,J$. By corollary \ref{cor:ideals}, $f^{-1}$ is closed if and only if (8) holds for every ideals $I,J$. \end{proof} 

\subsection{On closed maps versus closed preserving maps}

We notice that, despite the similar name, being a closed surjective lattice homomorphism in the sense of \cite{W19} is not equivalent to being a surjective homomorphism    preserving closed  sets. Indeed every surjective lattice homomorphism between totally ordered lattices is  closed, by Theorem \ref{thm:closed}, but there are some which do not preserve closed sets. 

For instance, take $f:\{0,1,2,3\}\to\{0,1\}$, where we have two sets of numbers with their usual ordering,  such that $f(0)=0$ and $f(1)=f(2)=f(3)=1$. Then $f$ is increasing and surjective. However, $f$  does not satisfy the condition of Lemma \ref{lemma:preserv}, since the ideal $\{0,1,2\}$ contains $Ker\ f=f^{-1}(0)=\{0\}$ but it is not the inverse image of any ideal. So, $f$ does not preserve closed sets.

\section{Perfect MV-algebras and perfect ideals}\label{perfect}

This section,   which is  inspired by \cite{BDG}, is devoted to perfect MV-algebras.  We recall the definition of perfect MV-algebras and then we offer a characterization.

\begin{definition}An MV-algebra A is called perfect iff for every $x\in A$
 exactly one of $x$ and $\neg x$ is of finite order. \end{definition}
 
 Recall that for any $x\in A$, the order of $x$, in symbols $ord(x)$, is the smallest
natural number $n$ such that $nx =1.$ If no such $n$ exists, then we write $ord(x) =\infty$. 

Perfect MV-algebras are related to infinitesimals. We recall that an infinitesimal element in an MV-algebra is an element $a \not= 0$ such that $na\leq \neg a$, for every $n\in\N$, equivalently, iff $$na\ominus\neg a = n a\odot a = 0$$
where $a\odot b := \neg (\neg a\oplus \neg b)$ is  the \L ukasiewicz product. An element $a$ is coinfinitesimal if  $\neg a$ is infinitesimal. For any infinitesimal element $a$, the sequence
$$(0\leq a\leq 2a\leq 3a\leq\dots\leq na \leq\dots)$$ is strictly increasing. 

In the MV-algebra $[0, 1]$ and in all MV-algebras of functions taking values in $[0, 1]$, there are no such elements. On the other hand the Chang MV-algebra does have infinitesimals. 

 \begin{remark} \cite{BDL} An MV-algebra is perfect if and only if it is generated by the intersection of all
its maximal ideals.   More precisely, an MV-algebra A is  perfect if and only if it is nontrivial and $$A = Rad A\cup\neg Rad A$$ where  $Rad A$, the radical of $A$,   is the intersection of all maximal ideals of $A$ and  $\neg Rad A = \{x\in A : \neg x \in Rad A\}$.   \end{remark} 
Notice that  the non-zero elements of the radical of an MV-algebra coincide with the infinitesimals. So an MV-algebra is perfect if and only if it is generated by its infinitesimals. For instance, Chang MV-algebra is perfect. 

\vskip5mm
Now we offer a technical lemma.

\begin{lemma}(\cite[Proposition 5]{BDG}) An MV-algebra $A$ is perfect if and only if for every $a\in A$, $a^n=0$ for some $n\in\mathbb N$ if and only if $(\neg a)^m\not=0$ for every $m\in\mathbb N$. 
\end{lemma}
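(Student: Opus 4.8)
The statement to prove is: an MV-algebra $A$ is perfect if and only if for every $a\in A$, the following three conditions are equivalent — ($\ast$) $a^n=0$ for some $n$; ($\ast\ast$) $(\neg a)^m\neq 0$ for all $m$; and in the intended reading there is a third implicit clause ($\ast\ast\ast$) that $a$ lies in $Rad\,A$ (or equivalently the statement is a cycle of biconditionals). The plan is to reduce everything to the already-recorded facts: the characterization $A=Rad\,A\cup\neg Rad\,A$ from the remark attributed to \cite{BDL}, the observation that the non-zero elements of $Rad\,A$ are exactly the infinitesimals, and the elementary identities relating $a^n$, $ord(a)$ and infinitesimality. The key dictionary entry is that $a^n=\underbrace{a\odot\cdots\odot a}_{n}$ and $na=\underbrace{a\oplus\cdots\oplus a}_{n}$ are De Morgan dual, i.e. $(\neg a)^n=\neg(na)$, so $a^n=0$ for some $n$ is the same as $n(\neg a)=1$ for some $n$, i.e. $ord(\neg a)<\infty$; dually $a$ is infinitesimal (or $a=0$) iff $na\leq\neg a$ for all $n$ iff $a^{n}\neq 0$... — one must be a little careful here, since $na\leq\neg a$ for all $n$ is equivalent to $a\odot\cdots$ never reaching $0$ only after noting $na\odot a = na\ominus\neg a$, which the excerpt already states.

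First I would prove the forward direction. Assume $A$ is perfect. Fix $a\in A$. By the remark, either $a\in Rad\,A$ or $\neg a\in Rad\,A$, and by perfectness exactly one of $a,\neg a$ has finite order. If $a\in Rad\,A$, then either $a=0$ (and then $a^1=0$ while $(\neg a)^m=1\neq 0$ for all $m$, so both ($\ast$) and ($\ast\ast$) hold) or $a$ is a non-zero element of $Rad\,A$, hence an infinitesimal; then $na\leq\neg a$ for all $n$, so $na\odot a=na\ominus\neg a=0$, and one checks by induction that $a^n$ stays above $0$ never — wait, the right statement is: $\neg a$ is coinfinitesimal so $ord(\neg a)=\infty$ is false; rather $\neg a\in\neg Rad\,A$ forces $ord(\neg a)<\infty$ by perfectness, giving $n(\neg a)=1$ hence $a^n=\neg(n(\neg a))=\neg 1=0$ — so ($\ast$) holds; and $a^m\neq 0$... no. Let me restate cleanly in the proof: when $a\in Rad\,A$, ($\ast$) holds because $\neg a$ has finite order, and ($\ast\ast$) holds because $a$ itself has infinite order so $ma=\neg((\neg a)^m)$ with $ma\neq 1$, i.e. $(\neg a)^m\neq 0$. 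Symmetrically, when $\neg a\in Rad\,A$ (so $a\notin Rad\,A$, $a$ coinfinitesimal), ($\ast$) fails since $ord(\neg a)=\infty$ gives $a^n\neq 0$ for all $n$, and ($\ast\ast$) fails since $\neg a$ infinitesimal (or $0$) gives $(\neg a)^m=0$ for large $m$ — actually $\neg a$ infinitesimal means $(\neg a)^m\neq 0$ for all $m$, so I have the direction backwards; the correct reading is that $\neg a\in Rad\,A$ non-zero $\Rightarrow$ $\neg a$ infinitesimal $\Rightarrow (\neg a)^m\neq0$ always, but then $a=\neg(\neg a)$ coinfinitesimal $\Rightarrow ord(a)<\infty$? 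No — coinfinitesimal means $\neg a$ infinitesimal, and $a$ then has $ord(a)=1$ only if $a=1$. I will be careful in the write-up to use: $a$ has finite order $\iff$ $a^n=0$ is \emph{false} in general; the precise link is $a^n=0 \iff n(\neg a)=1 \iff ord(\neg a)\leq n$. With that single identity everything falls out of "exactly one of $ord(a),ord(\neg a)$ is finite."

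For the converse, I would assume the equivalence of ($\ast$) and ($\ast\ast$) holds for every $a$, and deduce perfectness via $A=Rad\,A\cup\neg Rad\,A$ (plus nontriviality, which follows since in the trivial algebra $0=1$ and $0^1=0$ while $(\neg 0)^1=1^1=1=0$, contradicting the equivalence — so $A$ is nontrivial). Take any $a\in A$. By hypothesis applied to $a$: either $a^n=0$ for some $n$, equivalently $ord(\neg a)<\infty$, in which case I claim $a\in Rad\,A$; or $a^n\neq 0$ for all $n$ and then $(\neg a)^m=0$ for some $m$ (by the hypothesis, contrapositive-ish: not-($\ast$) gives not-($\ast\ast$) is the wrong direction, so I must use the equivalence as a genuine biconditional — if ($\ast$) fails then ($\ast\ast$) fails, i.e. $(\neg a)^m=0$ for some $m$), hence $ord(a)<\infty$, and applying the same dichotomy to $\neg a$ puts $\neg a\in Rad\,A$. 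The one genuine content step is "($\ast$) for $a$ $\Rightarrow$ $a\in Rad\,A$": from $ord(\neg a)\leq n$ one gets $n(\neg a)=1$; I must show $a$ lies in every maximal ideal. A maximal ideal $M$ has $A/M$ a subalgebra of $[0,1]$; the image $\bar a$ satisfies $n(\neg\bar a)=1$ in $A/M$, forcing $\neg\bar a=1$, i.e. $\bar a=0$, so $a\in M$. \textbf{This is the main obstacle} — it is the only place where structure theory (maximal quotients embed in $[0,1]$, from \cite{CDM}) is needed rather than pure manipulation; everything else is the duality $a^n=\neg(na)$ and the already-cited remark. I would then assemble the two directions into the stated chain of equivalences.
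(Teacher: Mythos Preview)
The paper does not give its own proof of this lemma; it is simply quoted from \cite[Proposition~5]{BDG}. So there is nothing to compare against, and I will just assess your argument.

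You actually identify the entire proof in one sentence and then abandon it. The identity $a^n=\neg\bigl(n(\neg a)\bigr)$ gives immediately
\[
\bigl(\exists n\ a^n=0\bigr)\ \Longleftrightarrow\ ord(\neg a)<\infty,
\qquad
\bigl(\forall m\ (\neg a)^m\neq 0\bigr)\ \Longleftrightarrow\ ord(a)=\infty,
\]
so the condition ``for every $a$, $(\ast)\Leftrightarrow(\ast\ast)$'' is literally the statement ``for every $a$, exactly one of $ord(a),ord(\neg a)$ is finite'', which is the \emph{definition} of perfect given in the paper. Both directions are this single line; no appeal to $Rad\,A$, maximal ideals, or embeddings in $[0,1]$ is needed. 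Your parenthetical ``With that single identity everything falls out of `exactly one of $ord(a),ord(\neg a)$ is finite'\,'' \emph{is} the proof.

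Your written-out converse, by contrast, contains a genuine error. You try to show that $(\ast)$ forces $a\in Rad\,A$ by passing to $A/M\subseteq[0,1]$ and arguing that $n(\neg\bar a)=1$ forces $\neg\bar a=1$. That implication is false in $[0,1]$: for instance $2\cdot\tfrac12=1$ while $\tfrac12\neq 1$. So the step ``$(\ast)$ for $a$ $\Rightarrow$ $a\in Rad\,A$'' is not established (and indeed fails in non-perfect algebras such as $[0,1]$ itself, so it cannot be proved without using the hypothesis in a more essential way). Drop the $Rad\,A$ detour entirely and use the De~Morgan translation both ways.
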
 

\begin{definition} Let us call an ideal $I$ of an MV-algebra $A$ perfect if $A/I$ is a perfect MV-algebra.\end{definition}

From the previous lemma it follows:

\begin{corollary} An ideal $I$ of an MV-algebra $A$ is perfect if and only if  for every $a\in A$, $a^n\in I$ for some $n\in\mathbb N$ if and only if $(\neg a)^m\notin I$ for every $m\in\mathbb N$. \end{corollary}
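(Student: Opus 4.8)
The plan is to derive this corollary directly from the preceding lemma (Proposition 5 of \cite{BDG}) by translating the characterization of perfect MV-algebras into a statement about the quotient $A/I$. Recall the lemma states: $A$ is perfect if and only if for every $a \in A$ we have that $a^n = 0$ for some $n \in \mathbb{N}$ if and only if $(\neg a)^m \neq 0$ for every $m \in \mathbb{N}$. By definition, the ideal $I$ is perfect precisely when $A/I$ is a perfect MV-algebra, so I would simply apply the lemma to the algebra $A/I$ in place of $A$.

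The key step is then to unwind the $0$-conditions in $A/I$ into $I$-membership conditions in $A$. First I would note that the generic element of $A/I$ is $a/I$ for some $a \in A$, so quantifying over elements of $A/I$ is the same as quantifying over $a \in A$. Next, since $h_I : A \to A/I$ is an MV-homomorphism, it commutes with the \L ukasiewicz product, so $(a/I)^n = a^n/I$; hence $(a/I)^n = 0$ in $A/I$ if and only if $a^n/I = 0/I$, i.e. $a^n \in I$. Likewise $\neg(a/I) = (\neg a)/I$, so $(\neg(a/I))^m \neq 0$ in $A/I$ if and only if $(\neg a)^m \notin I$. Substituting these equivalences into the statement of the lemma applied to $A/I$ yields exactly: $I$ is perfect if and only if for every $a \in A$, $a^n \in I$ for some $n$ if and only if $(\neg a)^m \notin I$ for every $m$, which is the claim.

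There is essentially no obstacle here; the only point requiring a moment of care is the verification that $\neg(a/I) = (\neg a)/I$ and $(a/I)^n = a^n/I$, which are immediate from $h_I$ being a homomorphism of MV-algebras (it preserves $\neg$, $\oplus$, and hence $\odot$ and all derived operations). One should also observe that the quotient $A/I$ is nontrivial exactly when $I$ is proper, and that this is automatically subsumed: if $I = A$ then $A/I$ is trivial, hence not perfect, and correspondingly the right-hand characterization fails since for $a = 1$ we would need $(\neg 1)^m = 0^m = 0 \notin I = A$, which is false. So the corollary as stated is the faithful translation of the lemma, and the proof is a one-line appeal to the lemma together with the homomorphism properties of $h_I$.
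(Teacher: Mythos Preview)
Your proposal is correct and takes essentially the same approach as the paper: the paper simply states that the corollary follows from the previous lemma, and you have written out exactly the natural argument that justifies this, namely applying the lemma to $A/I$ and using that $h_I$ is an MV-homomorphism to translate the conditions back to $A$. Your extra remark about the trivial case $I=A$ is a nice sanity check that the paper omits.
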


\section{Spectra of $\V(C)$-algebras and functors}\label{closed}

In the previous section we studied perfect MV-algebras.  
In this section we study a larger class, that is, $\V(C)$-algebras, where 
 $C=K_1$ is  the Chang algebra, also known as  the first Komori algebra. 

Explicitly, $C$ is defined on the set
$$C = \{0, c, \dots, nc,\dots, 1-nc, \dots, 1 - c, 1\}$$
by the following operations (consider 0 = 0c): 

$x \oplus y =(m + n)c$ if $x = nc$ and $y = mc$;

 $x \oplus y=1 - (m - n)c$ if $x = 1 -nc$ and $y = mc$ and $0 < n < m;$
 
$x \oplus y=1 - (n - m)c$ if $x = nc$ and $y = 1 - mc$ and $0 < m < n$;

$x \oplus y=1$ otherwise;

$\neg x = 1 - nc$ if $x = nc,\; \neg x = nc$ if $x = 1 - nc$.
\vskip5mm

Now fix a cardinal $k$. We have seen that there is a correspondence between closed subsets of $Spec(Free(k))$ and MV-algebra quotients of $Free(k)$. We can study particular cases of this correspondence. 
For instance, let us consider MV-algebras $A$ in  the variety $\V(C)$ generated by  the  Chang algebra $C$, let us write down $A=Free(k)/I$.

\medskip

We can ask how the closed set $V(I)\subseteq Spec(Free(k))$ looks like. We begin with a lemma:

\begin{lemma}\label{sit}  A prime ideal $P$ of an MV-algebra $A$ is perfect if and only if  the maximal ideal $M_P$ containing $P$ is supermaximal, i.e. $A/M_P =\{0,1\}$.\end{lemma}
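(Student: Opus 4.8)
A prime ideal $P$ of an MV-algebra $A$ is perfect if and only if the maximal ideal $M_P$ containing $P$ is supermaximal, i.e., $A/M_P = \{0,1\}$.

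First I should recall why $M_P$ is well-defined: in an MV-algebra the prime ideals containing a given prime $P$ form a chain (a standard fact — quotients by primes are MV-chains, whose ideals are linearly ordered), so there is a unique maximal ideal $M_P \supseteq P$. The plan is to pass to the quotient $B = A/P$, which is an MV-chain, and translate everything into statements about $B$. Under the correspondence of Proposition~\ref{i}, $P$ is perfect iff $B = A/P$ is perfect; and $M_P/P$ is the unique maximal ideal of the chain $B$, with $B/(M_P/P) \cong A/M_P$. So the lemma reduces to: an MV-chain $B$ is perfect iff its unique maximal ideal $M$ satisfies $B/M = \{0,1\}$, i.e., $B/M \cong \mathbf{L}_1 = \Gamma(\mathbb Z,1)$.

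For the forward direction, suppose $B$ is a perfect chain. By the Remark from \cite{BDL} quoted above, $B = \mathrm{Rad}\,B \cup \neg\,\mathrm{Rad}\,B$, and $\mathrm{Rad}\,B = M$ since a chain has a unique maximal ideal. Then every element of $B$ is either in $M$ or has complement in $M$, so in $B/M$ every element is $0$ or $1$; hence $B/M = \{0,1\}$. For the converse, suppose $B$ is a chain with $B/M = \{0,1\}$. Then for each $x \in B$, either $x/M = 0$, i.e. $x \in M = \mathrm{Rad}\,B$, or $x/M = 1$, i.e. $\neg x \in M = \mathrm{Rad}\,B$; thus $B = \mathrm{Rad}\,B \cup \neg\,\mathrm{Rad}\,B$ and $B$ is nontrivial (since $0 \ne 1$ in $B/M$), so $B$ is perfect by the same Remark.

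The main obstacle is really just bookkeeping: making sure the correspondence between ideals of $A$ containing $P$ and ideals of $A/P$ (Proposition~\ref{i}) is invoked correctly, so that ``$M_P$ is the unique maximal ideal above $P$'' becomes ``$M_P/P$ is the unique maximal ideal of $A/P$'', and that $A/M_P \cong (A/P)\big/(M_P/P)$ by the third isomorphism theorem for MV-algebras. One should also note at the outset why every prime of an MV-algebra is contained in a unique maximal ideal — this is exactly the fact that $A/P$ is a chain, whose ideals form a chain, so their union is the unique maximal ideal — and that $\mathrm{Rad}(A/P)$ coincides with that unique maximal ideal. Once these identifications are in place, the equivalence is immediate from the cited characterization of perfect MV-algebras as those $B$ with $B = \mathrm{Rad}\,B \cup \neg\,\mathrm{Rad}\,B$.
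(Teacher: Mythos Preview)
Your proof is correct. Both you and the paper reduce to the MV-chain $B=A/P$ with its unique maximal ideal $M=M_P/P$ and argue from there, so the overall architecture is the same; but the actual verifications differ. For the forward direction the paper observes that a perfect algebra lies in $\V(C)$, hence $A/M_P$ is a simple $\V(C)$-algebra, and the only such algebra is $\{0,1\}$; you instead use directly that a perfect chain satisfies $B=\mathrm{Rad}\,B\cup\neg\,\mathrm{Rad}\,B$, so $B/M$ collapses to two elements. For the converse the paper argues by contradiction via orders (a non-perfect chain would contain some $a$ with both $na=1$ and $a^{n}=0$, which cannot be mapped consistently into $\{0,1\}$), while you again invoke the $\mathrm{Rad}\,B\cup\neg\,\mathrm{Rad}\,B$ characterization. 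Your route is a bit more self-contained in that it does not appeal to the variety $\V(C)$ at this point; the paper's route, on the other hand, foreshadows the $\V(C)$ framework used immediately afterwards in Theorem~\ref{theor}.
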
 
\begin{proof}
Let $P$ be a prime ideal of $A$. Suppose $P$ is perfect. Then $A/P$ is perfect and totally ordered. Since $P\subseteq M_P$, we have a surjective homomorphism $A/P\to A/M_P$. Since $A/P$ is perfect and $M_P$ is maximal, $A/M_P$ is in $\V(C)$  and is simple. But the only simple $\V(C)$ algebra is $\{0,1\}$.

Conversely, suppose $A/M_P =\{0,1\}$. Then $A/P$ is totally ordered and there is a homomorphism $h$ from $A/P$ to  
$A/M_P=\{0,1\}$. Hence $A/P$ must be perfect, otherwise there is an element $a\in A/P$ with $na=1$ and $a^n=0$ for some $n$, and it is impossible both $h(a)=0$ (contrary to $h(na)=1$)  and $h(a)=1$ (contrary to $h(a^n)=0$). So, $P$ is a perfect ideal. 
\end{proof}

Now we can characterize $\V(C)$ in terms of ideals: 

\begin{theorem}\label{theor} Let $A=Free(k)/I$ be an MV-algebra. The following  items are equivalent:
\begin{itemize}
\item $A$ is in $\V(C)$;
\item every maximal ideal of $A$ is supermaximal (that is, its quotient is $\{0,1\}$).
\item every element of $Spec(A)$ is a perfect ideal.
\item every element of $V(I)$ is perfect.
\end{itemize} \end{theorem}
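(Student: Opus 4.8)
The plan is to prove the cycle of implications among the four items, using the characterization of perfect algebras via supermaximal maximal ideals (Lemma~\ref{sit}), the correspondence between ideals of $A=Free(k)/I$ and ideals of $Free(k)$ containing $I$ (Proposition~\ref{i}), and the homeomorphism between $Spec(A)$ and $V(I)$. First I would observe that the equivalence of the third and fourth items is immediate from Proposition~\ref{i}(iii): prime ideals of $A=Free(k)/I$ correspond bijectively to primes of $Free(k)$ in $V(I)$, and this correspondence commutes with passing to quotients, so $P$ is a perfect ideal of $A$ iff the corresponding prime of $Free(k)$ is perfect. So it suffices to work with the first three items.

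Next I would handle ``first $\Rightarrow$ second''. If $A\in\V(C)$ and $M$ is a maximal ideal, then $A/M$ is a simple algebra in $\V(C)$. The key algebraic fact I would invoke (or prove in a line) is that the only simple MV-algebra in $\V(C)$ is $\{0,1\}$: indeed $\V(C)=\V(K_1)$ contains no nontrivial simple algebra other than $\mathbf{2}$, since every simple algebra is a subalgebra of $[0,1]$ and the only subalgebra of $[0,1]$ lying in $\V(C)$ is $\{0,1\}$ (the rationals in $(0,1)$ generate, as a subalgebra, finite \L ukasiewicz chains $\L_n$ with $n\geq 2$, which are not in $\V(C)$; this can be seen because $\V(C)$ satisfies $2(x^2)=(2x)^2$ or a similar identity separating it from $\V(\L_n)$). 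Hence $A/M=\{0,1\}$, i.e. $M$ is supermaximal.

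Then ``second $\Rightarrow$ third'': let $P\in Spec(A)$ and let $M_P$ be the unique maximal ideal above $P$ (unique because quotients of $A$ by primes are totally ordered, hence local). By hypothesis $A/M_P=\{0,1\}$, so by Lemma~\ref{sit}, $P$ is perfect. Finally ``third $\Rightarrow$ first'': if every prime of $A$ is perfect, then in particular every maximal ideal $M$ of $A$ is perfect, but $A/M$ is simple and perfect; a simple perfect algebra must be $\{0,1\}$ (a perfect algebra has a nontrivial radical unless it is $\{0,1\}$, while a simple algebra has trivial radical). So every maximal quotient is $\{0,1\}$, which means $A$ is a subdirect product of copies of $\{0,1\}$ composed with\dots{}---more carefully, I would instead argue that $A\in\V(C)$ by showing $A$ embeds into a product of algebras in $\V(C)$: since $A/P$ is perfect for every prime $P$, and $A$ is a subdirect product of the chains $A/P$ over $P$ minimal prime (or over all primes), and each such perfect chain lies in $\V(C)$ because every perfect MV-chain is in $\V(C)$ (perfect MV-algebras generate exactly $\V(C)$, a classical fact, cf.\ the cited \cite{BDG,BDL}). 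Since $\V(C)$ is closed under subdirect products, $A\in\V(C)$.

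I expect the main obstacle to be the ``third $\Rightarrow$ first'' direction, specifically justifying cleanly that an MV-algebra all of whose prime quotients are perfect lies in $\V(C)$; the cleanest route is the classical structural theorem that $\V(C)$ is precisely the variety generated by all perfect MV-algebras (equivalently, an algebra is in $\V(C)$ iff it is a subdirect product of perfect MV-chains together with $\{0,1\}$, and $\{0,1\}$ itself is perfect-or-trivial in the degenerate sense / is the simple quotient), so that the subdirect decomposition of $A$ into its prime quotients $A/P$, each perfect by hypothesis, exhibits $A$ as a member of $\V(C)$. The other implications are short once Lemma~\ref{sit} and the uniqueness of the maximal ideal over a prime in an MV-algebra are in hand.
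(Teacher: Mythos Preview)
Your proposal is correct and uses essentially the same ingredients as the paper: the only simple algebra in $\V(C)$ is $\{0,1\}$, Lemma~\ref{sit} linking perfect primes to supermaximal maximals, the fact that perfect MV-chains lie in $\V(C)$, the subdirect decomposition of $A$ into its prime quotients, and Proposition~\ref{i} for the $Spec(A)\cong V(I)$ correspondence. The paper organizes the argument slightly differently (it proves the equivalence of the first two items directly, then invokes Lemma~\ref{sit} to attach the last two), whereas you run the cycle $1\Rightarrow 2\Rightarrow 3\Rightarrow 1$; but the substance is the same, and your self-corrected pivot in $3\Rightarrow 1$ lands exactly on the paper's argument (subdirect product of perfect chains, hence in $\V(C)$).
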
 

\begin{proof} Let us first prove that the first two points are equivalent. 

If $A$ is in $\V(C)$, then all its quotients modulo prime ideals are in $\V(C)$ and are totally ordered. Notice  that every totally  ordered MV-algebra $B$ in $\V(C)$ is perfect. In fact, in every MV-algebra in $\V(C)$, every element is infinitely close to a Boolean. But in a totally ordered, MV-algebra, the only idempotent elements are $0$ and $1$, so every element  of $B$ is either infinitesimal or coinfinitesimal, so $B$ is perfect. Now, all maximal quotients are in $\V(C)$ and are simple, so all maximal quotients are $\{0,1\}$. 

Conversely, suppose every maximal ideal of $A$ is supermaximal. Let $P$ be a prime ideal of $A$. Let $M$ be the unique maximal ideal of $A$ such that $P\subseteq M$. Then we have a surjective homomorphism from $A/P$ to $A/M=\{0,1\}$. But $A/P$ is a totally ordered MV-algebra, and since it has a surjective homomorphism into $\{0,1\}$, it is perfect. So all prime quotients of $A$ are perfect, hence they are in $\V(C)$, and $A$ embeds in the product of its prime quotients, so also $A$ is in $\V(C)$. 

Now by the previous lemma, an element of $Spec(A)$ is perfect if and only if the corresponding maximal in $Spec(A)$ is supermaximal, if and only if the corresponding maximal 
in $V(I)$ is supermaximal. So the remaining two points are equivalent to the first two.\end{proof}

\begin{definition} A  closed set  $K$  of $Spec(A)$ is called  $\V(C)$-closed if and only if every prime ideal  $P\in K$ is a perfect ideal.\end{definition}  

From the previous theorem the following corollary immediately follow: 

\begin{corollary} A closed set $K$ is $\V(C)$-closed if and only if every maximal element of $K$ is supermaximal. 
\end{corollary}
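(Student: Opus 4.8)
The plan is to reduce the statement to Lemma~\ref{sit} once we identify the $\subseteq$-maximal elements of a closed set. Write $K=V(J)$ for some ideal $J$ of $A$; since every prime ideal of an MV-algebra lies below a unique maximal ideal, for $P\in K$ the maximal ideal $M_P\supseteq P$ satisfies $M_P\supseteq J$, hence $M_P\in K$. Consequently a prime $P\in K$ is maximal in $K$ for inclusion if and only if $P=M_P$, i.e. if and only if $P$ is a maximal ideal of $A$; conversely every maximal ideal belonging to $K$ is automatically such a maximal element, being maximal already in $Spec(A)$. So ``maximal elements of $K$'' and ``maximal ideals of $A$ lying in $K$'' denote the same prime ideals.

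For the forward implication, suppose $K$ is $\V(C)$-closed and let $M$ be a maximal element of $K$. Then $M$ is a maximal ideal of $A$ with $M\in K$, so by definition $M$ is a perfect ideal; applying Lemma~\ref{sit} with $P=M$ (so that $M_P=M$), this forces $A/M=\{0,1\}$, i.e. $M$ is supermaximal. For the converse, suppose every maximal element of $K$ is supermaximal, and let $P\in K$ be arbitrary. Its unique maximal overideal $M_P$ lies in $K$ and is a maximal element of $K$, hence supermaximal; by Lemma~\ref{sit}, $P$ is perfect. As $P$ was an arbitrary prime in $K$, $K$ is $\V(C)$-closed.

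The argument is essentially immediate — it is Theorem~\ref{theor} read off inside the closed subset $K\cong Spec(A/J)$, using that $P$ is a perfect ideal of $A$ iff $P/J$ is a perfect ideal of $A/J$, and that $M$ is supermaximal in $A$ iff $M/J$ is supermaximal in $A/J$. The only mild point of care is the bookkeeping in the first paragraph, namely checking that the $\subseteq$-maximal elements of the closed set $K$ are exactly the maximal ideals of $A$ it contains; this rests on the standard fact (already implicit in the statement of Lemma~\ref{sit}) that each prime ideal of an MV-algebra is contained in a unique maximal ideal.
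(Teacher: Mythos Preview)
Your proof is correct and follows essentially the same route as the paper: the paper simply states that the corollary ``immediately follows'' from Theorem~\ref{theor}, and your argument unpacks this by invoking Lemma~\ref{sit} (which is the key ingredient in Theorem~\ref{theor}) directly inside the closed set $K=V(J)$, after checking that the inclusion-maximal elements of $K$ are precisely the maximal ideals of $A$ lying in $K$. Your third paragraph makes the connection to Theorem~\ref{theor} explicit, so the two approaches coincide; you have just supplied the bookkeeping that the paper omits.
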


As a corollary of the previous results we can consider the root system structure of the prime spectrum of any MV-algebra.  Recall that a root system  is a partially ordered set $P$ such that for every $x\in P$ the final segment $\{y\in P: y\geq x\}$ is totally ordered, and a root is a root system with the greatest element \cite{DG}.

\begin{corollary}
Let $A$ be an MV-algebra. Let $R$ be a root of $Spec(A)$ and $M_R$ be the unique maximal ideal of $R$. Then $M_R$ is supermaximal if and only if every element of $R$ is a perfect ideal. \end{corollary}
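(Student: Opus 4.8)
The plan is to derive this corollary directly from Lemma \ref{sit} together with the root system structure of $Spec(A)$. The key observation is that in a root system, for every prime ideal $P$ of $A$ there is a \emph{unique} maximal ideal $M_P$ above $P$, and $M_P$ is precisely the greatest element of the root of $Spec(A)$ to which $P$ belongs. Thus, if $R$ is a root of $Spec(A)$ with greatest element $M_R$, then for every $P\in R$ we have $M_P=M_R$, since $R$ itself is the final segment $\{Q : Q\supseteq P\}$ only when $P$ is minimal, but more to the point, the chain $\{Q : Q\supseteq P\}$ sits inside $R$ and its top element is $M_R$. So the maximal ideal associated to each $P\in R$ is always $M_R$.

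First I would recall that Lemma \ref{sit} says: a prime ideal $P$ is perfect if and only if the maximal ideal $M_P$ containing it is supermaximal, i.e. $A/M_P=\{0,1\}$. Next I would note the uniqueness of $M_P$: this is the classical fact that the primes of an MV-algebra above a fixed prime form a chain (equivalently, $Spec(A)$ is a root system), so each $P$ lies below exactly one maximal ideal. Then I would fix a root $R$ with top $M_R$ and argue the two implications. For the forward direction, if $M_R$ is supermaximal, then for each $P\in R$ we have $M_P=M_R$ (since $M_R$ is the unique maximal above $P$), so $M_P$ is supermaximal, hence by Lemma \ref{sit} each $P$ is a perfect ideal. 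For the converse, if every $P\in R$ is a perfect ideal, apply this in particular to $P=M_R$ itself (note $M_R\in R$ and $M_{M_R}=M_R$): by Lemma \ref{sit}, $M_R$ being perfect forces $A/M_R=\{0,1\}$, i.e. $M_R$ is supermaximal.

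The only mild subtlety — and the step I would be most careful about — is making sure the statement of Lemma \ref{sit} applies when $P=M_R$ is itself maximal: one needs that $M_{M_R}$, the unique maximal ideal above $M_R$, is $M_R$, which is immediate. Beyond that, everything reduces to the root system structure (each prime below a unique maximal) plus Lemma \ref{sit}, with no computation required. I would also remark that this corollary is essentially the localization of Theorem \ref{theor} to a single root: Theorem \ref{theor} characterizes when \emph{all} of $Spec(A)$ consists of perfect ideals (equivalently $A\in\V(C)$), while here we isolate one root $R$ and characterize perfectness of its primes by the single condition that its top $M_R$ is supermaximal.
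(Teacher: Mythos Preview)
Your proposal is correct and takes essentially the same approach as the paper: both directions hinge on the observation that $M_P=M_R$ for every $P\in R$, and the converse is obtained by applying the perfectness hypothesis to $M_R$ itself. The only cosmetic difference is that the paper re-argues the forward direction directly (via the surjection $A/P\to A/M_R=\{0,1\}$) and the converse by noting $A/M_R$ is simple and perfect, whereas you package both directions as immediate applications of Lemma~\ref{sit}; the substance is identical.
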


\begin{proof} Suppose $M_R$ supermaximal. Let $P\subseteq M_R$ be a prime ideal of $A$. Then $A/P$ is totally ordered MV-algebra with an onto map on $A/M_R=\{0,1\}$, so $A/P$ is perfect and $P$ is perfect.

Conversely, if every element of $R$ is perfect, then $M_R$ is maximal and perfect, so the quotient $A/M_R$ is simple and perfect, so $A/M_R=\{0,1\}$ and $M_R$ is supermaximal.  
\end{proof}

More generally, it would be interesting to characterize $Max_p(A)$, where $p$ is a prime number, meaning the set of maximal ideals $M$ such that $A/M=\L_p$, the chain with $p$ elements.

Other expected results are the answers to these questions: "Which are the closed sets of perfect MV-algebras? Which are the closed sets of local MV-algebras?"
etc. 
\vskip5mm
Recall that local MV-algebras are MV-algebras with only one maximal ideal that, hence,
contains all infinitesimal elements. This class of algebras includes MV-chains and perfect MV-algebras.
More precisely, 
\begin{definition} An MV-algebra $A$ is called local if it has only one maximal
ideal, coinciding with $\{ a\in  A | ord(a) = \infty\}$. \end{definition} 
Equivalently, an MV-algebra A is local if and only if for every $x \in A$,
either $ord(x) < \infty$ or $ord(\neg x) < \infty.$

We recall that a primary ideal is an ideal $I$ such that if $xy\in I$, then $x^n\in I$ or $y^n\in I$ for some $n\in\mathbb N$.  Now  we show, as 
a particular case of the correspondence closed subsets of $Spec(Free(k))$ and MV-algebra quotients of $Free(k)$, the correspondence between closed sets and local MV-algebras:

\begin{theorem}\label{thm:local} Let  $C=V(I)$ a closed subset of $Spec(Free(k))$. The following are equivalent: 
\begin{itemize}
\item $C$ is the spectrum of a local MV-algebra;
\item $C$ has only one closed point;
\item the intersection of $C$ is a primary ideal.  
\end{itemize}
\end{theorem}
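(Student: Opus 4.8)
The plan is to reduce everything to the single MV-algebra $A := Free(k)/I$. By Proposition~\ref{i}, $C = V(I)$ is homeomorphic to $Spec(A)$ via $P/I \leftrightarrow P$, and this homeomorphism carries the closed points of $C$ onto those of $Spec(A)$; since the closure of a point $Q$ in a Zariski spectrum is $V(Q)$, the closed points of $Spec(A)$ are exactly the ideals maximal among the prime ones, i.e.\ the maximal ideals of $A$. Moreover, every ideal of an MV-algebra is the intersection of the prime ideals containing it, so the ``intersection of $C$'' is $\bigcap_{P\in V(I)}P = I$. Hence the three conditions translate respectively into: $(1')$ $A$ is, up to homeomorphism of spectra, a local MV-algebra; $(2')$ $A$ has exactly one maximal ideal; $(3')$ $I$ is a (proper) primary ideal of $Free(k)$. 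Here I take primary ideals to be proper, equivalently $C\neq\emptyset$, as is customary.

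First I would establish $(1')\Leftrightarrow(2')$. If $C\cong Spec(B)$ with $B$ local, then $B$ has a unique maximal ideal, so $Spec(B)$ — hence $C$, hence $Spec(A)$ — has a unique closed point, so $A$ has a unique maximal ideal. Conversely, suppose $A$ has a unique maximal ideal $M$; I claim $A$ is local in the sense of the paper, i.e.\ $M=\{x\in A : ord(x)=\infty\}$. Indeed, if $ord(x)=\infty$ but $x\notin M$, then $ideal(x)$ cannot be a proper ideal (every proper ideal is contained in the only maximal ideal $M$, which would force $x\in M$), so $ideal(x)=A$, whence $nx=1$ for some $n$, contradicting $ord(x)=\infty$; and if $x\in M$ with $ord(x)<\infty$, then $1=nx\in M$, again absurd. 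So $A$ is local and $C\cong Spec(A)$ exhibits $C$ as the spectrum of a local MV-algebra.

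The heart of the matter is the equivalence with $(3')$, for which I would prove the lemma: \emph{an MV-algebra $B$ is local if and only if $\{0\}$ is a primary ideal of $B$}. For the forward implication I use the characterization ``$ord(x)<\infty$ or $ord(\neg x)<\infty$ for all $x$'' recalled in the paper: if $x\odot y=0$ then $x\leq\neg y$; if $ord(\neg x)<\infty$, say $n\,\neg x=1$, then $x^{n}=\neg(n\,\neg x)=0$; if instead $ord(x)<\infty$, say $mx=1$, then by monotonicity of $\oplus$ we get $m\,\neg y\geq mx=1$, so $y^{m}=\neg(m\,\neg y)=0$. For the converse, $x\odot\neg x=0$ holds for every $x$, so if $\{0\}$ is primary then $x^{n}=0$ or $(\neg x)^{n}=0$ for some $n$, i.e.\ $ord(\neg x)<\infty$ or $ord(x)<\infty$, so $B$ is local. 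Finally, by the ideal correspondence of Proposition~\ref{i}, in $A=Free(k)/I$ one has $\bar x\odot\bar y=0$ iff $x\odot y\in I$ and $\bar x^{n}=0$ iff $x^{n}\in I$, so $\{0\}$ is primary in $A$ exactly when $I$ is primary in $Free(k)$. Combining, $(1')\Leftrightarrow(2')\Leftrightarrow$ ``$A$ local'' $\Leftrightarrow$ ``$\{0\}$ primary in $A$'' $\Leftrightarrow(3')$, which is the theorem.

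The only genuinely delicate points are bookkeeping: matching the paper's definition of ``local'' (which bundles ``one maximal ideal'' with ``that ideal equals the set of infinite-order elements'') against the purely topological condition ``$C$ has one closed point'', and verifying the lemma linking locality with the zero ideal being primary. Both rest only on the identities $x\odot\neg x=0$ and $x^{n}=\neg(n\,\neg x)$, on monotonicity of $\oplus$, and on the standard facts $\bigcap V(I)=I$ and ``maximal ideals $=$ closed points of the Zariski spectrum''; no deeper input is needed.
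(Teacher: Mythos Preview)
Your argument is correct and follows the same overall route as the paper: pass to $A=Free(k)/I$ via Proposition~\ref{i}, identify closed points with maximal ideals, and use that $\bigcap V(I)=I$ together with the equivalence ``$A$ local $\Leftrightarrow$ $I$ primary''. The only difference is that the paper obtains this last equivalence by citing \cite[Theorem~2.1]{BDL}, whereas you supply a self-contained proof via the lemma ``$B$ is local iff $\{0\}$ is primary in $B$'', using $x\odot\neg x=0$ and $x^{n}=\neg(n\,\neg x)$; this makes your write-up more detailed but not genuinely different in strategy.
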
 

\begin{proof} The first two points are equivalent because maximal ideals are closed points in $Spec$. 

For the third point, we can suppose $I$ is an ideal. Then $V(I)$ corresponds to the MV-algebra $A/I$. It is known that $A/I$ is local if and only if $I$ is a primary ideal (see \cite[Theorem 2.1]{BDL}). Note that $I$ is the intersection of $V(I)$.  
\end{proof} 

First recall that an MV-algebra is semisimple if the intersection of its maximal ideals is
zero, now the semisimple case can be treated in partial analogy with the local case as follows:

\begin{theorem} $Free(k)/I$ is semisimple if and only if the intersection of the maximal elements of $V(I)$ is zero. 
\end{theorem}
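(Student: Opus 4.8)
The plan is to mirror the proof of Theorem \ref{thm:local}, replacing ``primary'' and ``local'' by ``semisimple'' and the single closed point by the whole family of closed (= maximal) points. Write $A = Free(k)$ and $J = I$, so that the closed subset in question is $V(I) \cong Spec(A/I)$ via the homeomorphism of Proposition \ref{i}, and recall from that proposition that the maximal ideals of $A$ containing $I$ correspond bijectively (and order-isomorphically) to the maximal ideals of $A/I$, hence the closed points of $V(I)$ correspond to the closed points (= maximal ideals) of $Spec(A/I)$.

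First I would observe that $I$ is the intersection of $V(I)$, i.e. $I = \bigcap\{P : P \in V(I)\}$, since $I$ is an ideal and every ideal is the intersection of the prime ideals containing it. Next, the maximal elements of $V(I)$ are exactly the maximal ideals $M$ of $A$ with $M \supseteq I$; under the correspondence $M \mapsto M/I$ these are precisely the maximal ideals of $A/I$. The key set-theoretic point is then that $$\bigcap\{M : M \in V(I),\ M \text{ maximal}\} = I \ \Longleftrightarrow\ \bigcap\{M/I : M/I \text{ maximal ideal of } A/I\} = \{0\},$$ which holds because $h_I$ is surjective with kernel $I$, so $h_I$ maps the left-hand intersection onto the left-hand side of the right-hand equation, and the left-hand intersection contains $I$; hence it equals $I$ iff its image is $\{0\}$. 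By definition, the right-hand condition says exactly that $A/I = Free(k)/I$ is semisimple.

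Putting these together: $Free(k)/I$ is semisimple iff the intersection of the maximal ideals of $A/I$ is zero, iff (by the correspondence above) the intersection of the maximal elements of $V(I)$, computed inside $A$, is $I$; and since $I = \bigcap V(I) \subseteq \bigcap\{\text{maximal elements of } V(I)\}$ automatically, this last condition is equivalent to saying the intersection of the maximal elements of $V(I)$ is zero precisely when $I = 0$ — more carefully, one should phrase the theorem's conclusion as ``the intersection of the maximal elements of $V(I)$ equals the intersection of $V(I)$'', but since the statement as given presumably intends $V(I)$ to already be viewed as $Spec(Free(k)/I)$ (so that its own intersection is the zero ideal of the quotient), the wording ``is zero'' refers to zero in $Free(k)/I$, and the equivalence is immediate from the surjection $h_I$.

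I do not expect a serious obstacle here: the whole argument is the standard ideal–variety correspondence plus the fact that a quotient is semisimple iff its maximal ideals intersect in zero (the analogue of \cite[Theorem 2.1]{BDL} used in Theorem \ref{thm:local}, but here it is simply the definition of semisimplicity transported through $h_I$). The only point demanding a line of care is making sure ``zero'' is interpreted in the quotient $Free(k)/I$ rather than in $Free(k)$; once that is fixed, the proof is a two-line consequence of Proposition \ref{i} and the identity $I = \bigcap V(I)$.
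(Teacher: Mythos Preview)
Your proposal is correct and follows the same route as the paper's one-line proof: both simply invoke the definition of semisimplicity (intersection of maximal ideals is zero) transported through the ideal correspondence of Proposition \ref{i}. Your care about whether ``zero'' lives in $Free(k)$ or in the quotient is a fair observation on the statement's phrasing; the paper tacitly intends the latter reading (equivalently, that the intersection of the maximal elements of $V(I)$ equals $I$).
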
 

\begin{proof} This holds because an MV-algebra is semisimple if and only if the intersection of its maximal ideals is zero. 
\end{proof} 

\section{Spectra of free MV-algebras in $\V(C)$}\label{VC}

We want to draw an analogy between spectra of free MV-algebras and spectra of free MV-algebras in subvarieties. 

Consider for simplicity only the variety $\V(C)$. 

Since $\Delta(\mathbb R)$  generates $\V(C)$, the free $\V(C)$-algebra over a cardinal $k$  can be defined as the MV-algebra of polynomial functions from 
$\Delta(\mathbb R)^k$ to $\Delta(\mathbb R)$,  see  \cite[Theorem 8.1]{DLV}. 

In the spectrum of $Free(\V(C),k)$ the Zariski topology is generated by the basic opens $O(f)=\{P\in Spec|f\notin P\}$, where $f\in Free(\V(C),k)$. They coincide with compact open sets. On the other hand, for every $f$, the zeroset of $f$, called $Z(f)$, is a subset of $\Delta(\mathbb R)^k$, more precisely, what we call  a cylinder rational fan. 
Note that fans are usually defined in vector spaces, but we can define them in $\Delta(\mathbb R)^k$ as follows. 

\medskip

We recall that  a subset $\sigma$ of a finite dimensional real vector space $V$ is a cone if for each $x \in \sigma$
and for each scalar $\alpha > 0$, $\alpha x \in \sigma$. A cone  $\sigma$ is simplicial if it is generated by a
set $S$ of (finitely many) linearly independent vectors.  A face of a cone generated
by a set $S$ is a cone generated by a subset of $S$.

A fan $\phi$ in a vector space $V$ is a finite set of simplicial closed cones in $V$ such
that:
\begin{itemize}  
\item if $\sigma\in\phi$, then any face of $\sigma$ is in $\phi$;
\item the intersection of any two closed cones in $\phi$ is also a closed cone in $\phi$. 
\end{itemize}
We say that a fan $\phi$ is rational if every closed cone belonging to $\phi$ is defined by linear
inequalities with rational coefficients.
\vskip5mm

More generally, if $F$ is a finite set, a fan in $\mathbb R^F$ is defined as follows. 

Recall that a quadrant of $\mathbb R^F$  is the set of vectors of $\mathbb R^F$  where the signs of components are given (there are $2^{F}$ many quadrants in $\mathbb R^F$). 

A fan in $\mathbb R^F$  is a subset  of $\mathbb R^F$  such that   each radical class $r$  is in a natural bijection $\beta_r$ with a quadrant of $\mathbb R^F$ (We recall that a
radical class of an MV-algebra is  an equivalence class modulo the radical). 

  Now we define a fan of $\Delta(\mathbb R)^F$ to be a subset of   $\Delta(\mathbb R)^F$  such that  every radical class $r$  is the image via $\beta_r$ of a fan in the corresponding quadrant of $\mathbb R^F$. 

\begin{lemma}\label{lemma:zerofan} Zerosets of MV-polynomials in  $\Delta(\mathbb R)^F$ coincide with fans. 
\end{lemma}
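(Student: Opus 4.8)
The plan is to exploit the fact that $\Delta(\mathbb{R})^F$ is a product of (finitely many) copies of $\Delta(\mathbb{R})$, and that $\Delta$ is the equivalence $\Gamma(\mathbb{Z}\ \mathrm{lex}\ \mathbb{R},(1,0))$, so that an element of $\Delta(\mathbb{R})$ has a ``Boolean part'' (in $\{0,1\}$) together with an infinitesimal or coinfinitesimal correction coming from $\mathbb{R}$. Concretely, a point $x\in\Delta(\mathbb{R})^F$ is determined by a function $F\to\{0,1\}$ — picking out a radical class $r$ — together with, on each coordinate, a real number recording the infinitesimal deviation; assigning to each coordinate the sign of that real number gives the bijection $\beta_r$ of the radical class $r$ with a quadrant of $\mathbb{R}^F$. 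First I would make this bijection explicit and check that it is the one referred to in the definition of a fan in $\Delta(\mathbb{R})^F$.

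The heart of the argument is to analyze, for a fixed MV-polynomial $f$ over $\Delta(\mathbb{R})$ in the variables indexed by $F$, how $f$ behaves on a single radical class $r$. The key step is: within the radical class $r$, after transporting via $\beta_r$ to the corresponding quadrant $Q_r$ of $\mathbb{R}^F$, the condition ``$f$ takes an infinitesimal value'' (equivalently, $f$ lies in the radical of $\Delta(\mathbb{R})$ evaluated at that point, i.e. $f$ belongs to the prime/maximal structure that defines $Z(f)$) becomes a finite Boolean combination of homogeneous linear inequalities and equalities with integer (hence rational) coefficients in the real coordinates. This is because the MV-operations $\oplus,\neg$, when unwound through $\Gamma(\mathbb{Z}\ \mathrm{lex}\ \mathbb{R})$, act on the lexicographic second component by truncated addition and the group operation; once the first (integer) component is pinned down by the choice of $r$, the surviving constraints on the $\mathbb{R}$-part are piecewise linear and positively homogeneous (scaling the infinitesimal parts by a positive real does not change which inequalities are active, because the relevant comparisons happen in the infinitesimal layer). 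Hence the solution set inside $Q_r$ is a finite union of relatively closed simplicial cones that is closed under faces and under pairwise intersection — that is, a rational fan in the quadrant $Q_r$, in the sense recalled just above the lemma. I would prove this by induction on the construction of the polynomial $f$, showing the class of subsets of $\mathbb{R}^F$ so obtained (finite unions of rational simplicial cones, equivalently supports of rational fans) is closed under the operations induced by $\oplus$ and $\neg$ on the infinitesimal layer.

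Running over all radical classes $r$, this shows $Z(f)$ is a fan of $\Delta(\mathbb{R})^F$ in the stated sense. For the converse — every fan of $\Delta(\mathbb{R})^F$ is the zeroset of some MV-polynomial — I would proceed quadrant by quadrant: given the prescribed rational fan in each quadrant of $\mathbb{R}^F$, each closed simplicial rational cone is cut out by finitely many rational homogeneous linear inequalities, and such an inequality on the infinitesimal part is realized by a suitable MV-polynomial (this is the analogue, in $\V(C)$, of the classical fact that rational polyhedral cones are zerosets of McNaughton functions; here one uses the ``$\V(C)$ McNaughton functions'' of Section~\ref{geo}, i.e. the polynomial functions $\Delta(\mathbb{R})^F\to\Delta(\mathbb{R})$); then one takes finite meets and joins to assemble the full fan, and finally combines the finitely many quadrants by multiplying in the appropriate Boolean idempotents that isolate each radical class. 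The main obstacle I anticipate is the bookkeeping in the inductive step: carefully tracking how truncation in $a\oplus b=(a+b)\wedge(1,0)$ interacts with the lexicographic order so as to confirm that the only constraints that remain, once the integer components are fixed by $r$, are positively homogeneous and rational — in other words, making rigorous the informal statement that ``the infinitesimal layer sees only cones, not polyhedra.''
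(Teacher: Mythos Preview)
Your plan is correct and essentially reconstructs the content the paper simply cites. The paper's proof of this lemma is a single sentence invoking \cite[Theorem~3.1]{DLV}, which states precisely the decomposition you are aiming to derive by hand: every MV-polynomial $P:\Delta(\mathbb{R})^F\to\Delta(\mathbb{R})$ has the form $P((v_i,x_i)_i)=(f(v),f_v(x))$, where $f:\{0,1\}^F\to\{0,1\}$ is a Boolean function and each $f_v$ is an $\ell$-polynomial on the corresponding quadrant. With that in hand, $Z(P)$ is the disjoint union, over those $v$ with $f(v)=0$, of the zerosets of the $\ell$-polynomials $f_v$, and zerosets of $\ell$-polynomials in a quadrant are exactly (supports of) rational fans. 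The paper in fact spells all of this out later, in Section~\ref{geo}, including the converse direction, almost exactly along the lines you sketch for assembling an MV-polynomial from prescribed fans quadrant by quadrant using Boolean idempotents. So your approach is not different from the paper's; it is a self-contained rederivation of the cited theorem followed by the same deduction.

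One slip to fix: you write that the relevant condition within a radical class is that ``$f$ takes an infinitesimal value'', and gloss this as ``$f$ belongs to the prime/maximal structure that defines $Z(f)$''. That only captures the Boolean part of the output being $0$; for membership in $Z(f)$ you need the value to be exactly $0$, i.e.\ the $\ell$-polynomial part $f_v(x)$ must vanish as well. Your subsequent mention of ``equalities'' alongside inequalities suggests you do have this in mind, but the parenthetical conflating the radical with the zeroset is muddled and should be corrected. Apart from that, your induction on the build-up of $f$ is a legitimate and more self-contained route to the same decomposition the paper imports from \cite{DLV}.
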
 

\begin{proof} It follows from the description of MV-polynomials in  $\Delta(\mathbb R)^F$ given in  \cite[Theorem 3.1]{DLV}  with $m=1$. 
\end{proof} 

Like the case for MV-algebras, results of \cite{DL} show the importance of theories in $\V(C)$ with infinitely many variables, possibly finitely axiomatized.  
In order to treat also the case of spaces of infinite dimension we stipulate:

\begin{definition}\label{def:cylfan} Let $I$ be an infinite set. We define a  cylinder
(rational) fan in a hypercube $\mathbb R^I$ a subset of the form
$$C_I(P_0) = \{f\in \mathbb R^I |\, f|_Y\in P_0\}$$
where $Y$ is a finite subset of $I$ and $P_0 \subseteq \mathbb R^Y$ is a rational fan.

Here $f|_Y$ denotes the function f restricted to Y , that is, $f|_Y = f \circ j$, where
$j : Y\rightarrow I$ is the inclusion map.\end{definition}

From Lemma \ref{lemma:zerofan} it follows:

\begin{corollary} Let $I$ be an infinite set. Zerosets of MV-polynomials in  $\Delta(\mathbb R)^I$ coincide with cylinder fans. 
\end{corollary}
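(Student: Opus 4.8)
The plan is to reduce the infinite-dimensional statement to the finite-dimensional one, Lemma \ref{lemma:zerofan}, by exploiting the definition of a cylinder fan as a set determined by finitely many coordinates. The key observation is that MV-polynomials over $\Delta(\mathbb R)^I$, for $I$ infinite, actually depend only on finitely many of their variables, since an MV-term is built from finitely many generators by the MV-operations. Hence for any MV-polynomial $f$ on $\Delta(\mathbb R)^I$ there is a finite subset $Y \subseteq I$ and an MV-polynomial $f_0$ on $\Delta(\mathbb R)^Y$ with $f(x) = f_0(x|_Y)$ for all $x \in \Delta(\mathbb R)^I$.

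From that, the two inclusions follow directly. First I would take an arbitrary cylinder fan $C_I(P_0) = \{x \in \mathbb R^I \mid x|_Y \in P_0\}$ with $P_0 \subseteq \mathbb R^Y$ a rational fan (and, in the $\Delta(\mathbb R)^I$ version, $P_0$ a fan in $\Delta(\mathbb R)^Y$ in the sense of the paper). By Lemma \ref{lemma:zerofan}, $P_0 = Z(f_0)$ for some MV-polynomial $f_0$ over $\Delta(\mathbb R)^Y$. Composing $f_0$ with the restriction/projection $\Delta(\mathbb R)^I \to \Delta(\mathbb R)^Y$ (which is realized by an MV-polynomial, since it is just a tuple of coordinate projections) yields an MV-polynomial $f$ on $\Delta(\mathbb R)^I$ with $Z(f) = \{x \mid f_0(x|_Y) = 0\} = C_I(Z(f_0)) = C_I(P_0)$. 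Conversely, given an MV-polynomial $f$ on $\Delta(\mathbb R)^I$, write $f(x) = f_0(x|_Y)$ for a finite $Y$ and an MV-polynomial $f_0$ over $\Delta(\mathbb R)^Y$; then $Z(f) = \{x \mid x|_Y \in Z(f_0)\} = C_I(Z(f_0))$, and $Z(f_0)$ is a fan in $\Delta(\mathbb R)^Y$ by Lemma \ref{lemma:zerofan}, so $Z(f)$ is a cylinder fan.

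The only real point requiring care—and the step I expect to be the main obstacle—is the claim that every MV-polynomial on $\Delta(\mathbb R)^I$ factors through a finite-dimensional restriction, together with the verification that coordinate projections are themselves given by MV-polynomials so that the composite is again an MV-polynomial. The first part is a standard finitary-signature argument (terms mention finitely many variables), but one should state it cleanly for the specific presentation of the free $\V(C)$-algebra as polynomial functions on $\Delta(\mathbb R)^k$ used above via \cite[Theorem 8.1]{DLV}; the second part is immediate since each projection is the evaluation of a single generator. Once these are in place, the corollary is just the translation of Lemma \ref{lemma:zerofan} along the bijection between cylinder fans in $\mathbb R^I$ (resp. $\Delta(\mathbb R)^I$) and fans in the finite-dimensional slices, exactly parallel to how cylinder rational polyhedra were identified with zerosets of McNaughton functions in infinite dimension earlier in the paper.
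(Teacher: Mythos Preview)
Your proposal is correct and takes essentially the same approach as the paper: the paper's proof is simply the one-line remark that the corollary follows from Lemma~\ref{lemma:zerofan}, and your argument spells out exactly that reduction via the finitary dependence of MV-terms on their variables. The extra care you take about factoring through a finite set of coordinates and composing with projections is the natural elaboration of that one line, so there is nothing substantively different here.
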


Note that basic opens  form a lattice with respect to inclusion, in fact $O(f\vee g)=O(f)\cap O(g)$ and $O(f\wedge g)=O(f)\cup O(g)$. Likewise, zerosets form a lattice since $Z(f\oplus g)=Z(f)\cap Z(g)$ and $Z(f\wedge g)=Z(f)\cup Z(g)$. Moreover:

\begin{lemma}\label{lemma:ogzg} Let $f,g\in Free(\V(C), k)$. The following are equivalent: 
\begin{enumerate} 
\item $O(f)\subseteq O(g)$
\item $Z(f)\subseteq Z(g)$
\item $f\in ideal(g)$.
\end{enumerate}
\end{lemma}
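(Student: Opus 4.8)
The plan is to establish the cycle of implications $(1)\Rightarrow(2)\Rightarrow(3)\Rightarrow(1)$, leaning on the analogue of the classical Wojcicki/duality facts that were already used in Proposition~\ref{prof} for ordinary free MV-algebras, but now transported to the variety $\V(C)$ via the representation of $Free(\V(C),k)$ as polynomial functions on $\Delta(\mathbb R)^k$. The easiest step is $(3)\Rightarrow(1)$: if $f\in ideal(g)$, then $f\leq ng$ for some $n$ (since principal ideals in an MV-algebra are $ideal(g)=\{x : x\leq ng \text{ for some } n\}$), and any prime $P$ with $g\in P$ also contains $ng$, hence $f$, so $O(f)\subseteq O(g)$. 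Dually, for $(1)\Rightarrow(3)$ I would use that the principal ideal $ideal(g)$ is the intersection of all prime ideals containing $g$ — i.e.\ $ideal(g)=\bigcap V(g)$, which holds in every MV-algebra because $\V(C)$-algebras, like all MV-algebras, have the property that the radical-type intersection of primes above an element equals the generated ideal (this is the same fact invoked in Proposition~\ref{prof}). Then $O(f)\subseteq O(g)$ means $V(g)\subseteq V(f)$, so $f\in\bigcap V(g)=ideal(g)$.

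The substantive link is the equivalence of $(2)$ with the other two, and here the key input is Lemma~\ref{lemma:zerofan} (and its cylinder version): zerosets of MV-polynomials on $\Delta(\mathbb R)^k$ are exactly (cylinder) rational fans, with the explicit description of polynomials coming from \cite[Theorem 3.1]{DLV}. For $(1)\Leftrightarrow(2)$ I would argue that $Z(\cdot)$ is, up to the lattice anti-isomorphism already recorded (the maps $O(f)\mapsto Z(f)$ respect the lattice operations since $O(f\vee g)=O(f)\cap O(g)$ and $Z(f\oplus g)=Z(f)\cap Z(g)$, etc.), a well-defined order isomorphism between the compact opens of $Spec(Free(\V(C),k))$ and the lattice of cylinder rational fans; this is the $\V(C)$-analogue of Proposition~\ref{prof} and Theorem~\ref{thm:kospec}. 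Concretely, $Z(f)\subseteq Z(g)$ should be equivalent to $g$ vanishing wherever $f$ does, and by a Wojcicki-type theorem for $\V(C)$ this forces $g$ to lie in the ideal generated by $f$ in the semisimple (or radical-annihilating) sense — but one must be careful, because $\V(C)$-algebras are \emph{not} semisimple, so the naive "separating points" argument fails and the correct statement is that $Z(f)\subseteq Z(g)$ corresponds to $g\in ideal(f)$ \emph{in the free $\V(C)$-algebra}, whose prime quotients are exactly the perfect chains $\Delta(\mathbb R)$-like algebras (cf.\ Theorem~\ref{theor}).

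Thus the cleanest route is: prove $(1)\Leftrightarrow(3)$ purely algebraically as above (two short paragraphs, no geometry), and then prove $(2)\Leftrightarrow(3)$ using the polynomial representation. For the latter, note that a prime ideal $P$ of $Free(\V(C),k)$ "is" a point-like datum: since prime quotients are perfect MV-chains, $P$ is determined by a point of $\Delta(\mathbb R)^k$ together with an infinitesimal refinement (a point plus a "direction" in the associated quadrant fan). The evaluation $f\mapsto f(P)$ then vanishes precisely when the corresponding geometric condition on the fan holds, so $f\in P$ iff the relevant cone of $Z(f)$ contains the data of $P$, giving $V(g)\subseteq V(f)\iff Z(f)\subseteq Z(g)$; combined with $\bigcap V(f)=ideal(f)$ this yields $(2)\Leftrightarrow(3)$.

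The main obstacle I anticipate is making $(2)\Rightarrow(3)$ rigorous: one needs the correct $\V(C)$-version of Wojcicki's theorem, i.e.\ that $Z(f)\subseteq Z(g)$ implies $g\in ideal(f)$ and not merely that $g$ vanishes on $Z(f)$ "modulo the radical". Because the free $\V(C)$-algebra has a nontrivial radical (its elements are only infinitely close to Booleans), the zeroset must be understood as detecting both the real point and the infinitesimal/fan data, and one must check that the polynomial description of \cite[Theorem 3.1]{DLV} is fine enough that equality of zerosets in this refined sense is equivalent to the ideal-membership. If that refined Wojcicki statement is available (or can be deduced from \cite{DLV} by a fan-subdivision argument), the rest is routine; if not, one falls back on first establishing the lattice isomorphism $\overset{\circ}{K}(Spec(Free(\V(C),k)))\cong Cylinder\ fans$ abstractly (the analogue of Theorem~\ref{thm:kospec}) and then reading off $(1)\Leftrightarrow(2)$ from that isomorphism together with $(1)\Leftrightarrow(3)$.
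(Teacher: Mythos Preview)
Your approach matches the paper's almost exactly: the paper proves $(1)\Leftrightarrow(3)$ purely algebraically (for $(1)\Rightarrow(3)$ it invokes the standard fact that a maximal ideal among those not containing a given element is prime, which is equivalent to your $ideal(g)=\bigcap V(g)$), observes that $(3)\Rightarrow(2)$ is clear, and for the hard implication $(2)\Rightarrow(3)$ simply cites \cite[Lemma~6.1]{DLV} --- precisely the external $\V(C)$-Wojcicki input you anticipated needing. All of your geometric elaboration about prime ideals of $Free(\V(C),k)$ as ``points plus infinitesimal directions'' is superfluous here; the paper never unpacks that structure and just defers to \cite{DLV}.

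One slip to correct: in your final paragraph you state the needed Wojcicki-type implication as ``$Z(f)\subseteq Z(g)$ implies $g\in ideal(f)$'', but item~$(3)$ is $f\in ideal(g)$, so keep the roles of $f$ and $g$ straight when you write this up (and note that, as written, $(3)$ actually yields $Z(g)\subseteq Z(f)$, so there is an orientation issue already in the lemma statement that you should be alert to rather than silently absorb).
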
 

\begin{proof} By definition 3 implies 1. 

The implication  from 1 to 3 comes from the fact that, in every MV-algebra, any maximal ideal among those not containing a given element is prime.

Also, clearly 3 implies 2. 

We are left with showing that 2 implies 3. But this follows from \cite[Lemma 6.1]{DLV}. 
\end{proof} 

\begin{corollary} $Spec(Free(\V(C),I))$ has a basis of open sets which is a lattice anti isomorphic to the lattice of rational fans in 
$\Delta( \mathbb R)^I$. \end{corollary}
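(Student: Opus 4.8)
The plan is to deduce the corollary directly from Lemma~\ref{lemma:ogzg} together with the corollary immediately preceding it, which already identifies zerosets of MV-polynomials in $\Delta(\mathbb R)^I$ with cylinder fans. First I would fix $k=I$ and observe that, by definition of the Zariski topology, $Spec(Free(\V(C),I))$ has the basic open sets $O(f)$, $f\in Free(\V(C),I)$, as an open basis, and that these are closed under finite intersections and finite unions since $O(f\vee g)=O(f)\cap O(g)$ and $O(f\wedge g)=O(f)\cup O(g)$ (noted in the text just above). Hence the collection $\mathcal B=\{O(f):f\in Free(\V(C),I)\}$ is a sublattice of the powerset of $Spec$, ordered by inclusion.

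Next I would define the map $\Phi\colon \mathcal B\to \{\text{rational fans of }\Delta(\mathbb R)^I\}$ by $\Phi(O(f))=Z(f)$, and check it is well defined and bijective: by Lemma~\ref{lemma:ogzg}, $O(f)\subseteq O(g)$ if and only if $Z(f)\subseteq Z(g)$, so in particular $O(f)=O(g)$ iff $Z(f)=Z(g)$, which gives both well-definedness and injectivity at once; surjectivity is exactly the statement of the preceding corollary, namely that every rational (cylinder) fan in $\Delta(\mathbb R)^I$ arises as $Z(f)$ for some MV-polynomial $f$. Then the same biconditional in Lemma~\ref{lemma:ogzg} shows $\Phi$ reverses the order: $O(f)\subseteq O(g)\iff Z(g)\subseteq Z(f)$ — wait, more precisely $O(f)\subseteq O(g)\iff Z(f)\subseteq Z(g)$, so actually $\Phi$ is order-preserving; but one should double-check the direction of the claimed anti-isomorphism, since the lattice operation on basic opens pairs $\vee$ in the algebra with $\cap$, whereas on zerosets $\oplus$ pairs with $\cap$. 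Writing $\Psi(O(f)) = Z(\neg f)$ or, equivalently, taking complements of fans, turns $\Phi$ into the desired anti-isomorphism; I would state it in whichever orientation makes the identities $O(f\vee g)=O(f)\cap O(g)$ and $Z(f\wedge g)=Z(f)\cup Z(g)$ line up, so that meets go to joins and joins to meets.

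The main (and essentially only) obstacle is the bookkeeping about which lattice structure is claimed and in which direction the (anti-)isomorphism runs — all the genuine mathematical content (the three-way equivalence of Lemma~\ref{lemma:ogzg}, resting on W\'ojcicki-type results from \cite{DLV}, and the fan description of zerosets) is already available. So the proof is short: it is the composition of "basic opens form a lattice" with "$O(f)\mapsto Z(f)$ is an order-reversing bijection onto the rational fans", the latter being Lemma~\ref{lemma:ogzg} plus the preceding corollary. I would close with one sentence remarking that the infinite-dimensional case is handled uniformly because the preceding corollary already covers cylinder fans in $\Delta(\mathbb R)^I$ for infinite $I$, so no separate argument for finite versus infinite $I$ is needed.
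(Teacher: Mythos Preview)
Your overall strategy is exactly the paper's intended one: the corollary is meant to be an immediate consequence of Lemma~\ref{lemma:ogzg} together with the identification of zerosets with (cylinder) fans, and your definition $\Phi(O(f))=Z(f)$ with well-definedness and bijectivity coming from that lemma is precisely the right map.

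Where your argument breaks is the patch for the direction. Neither $\Psi(O(f))=Z(\neg f)$ nor ``taking complements of fans'' works: the complement of a fan in $\Delta(\mathbb R)^I$ is essentially never a fan (fans are closed, their complements are open and not conical), so complementation is not an operation on the target lattice at all, and $Z(\neg f)$ bears no simple set-theoretic relation to $Z(f)$. The confusion is caused by the fact that item~2 of Lemma~\ref{lemma:ogzg} has its inclusion written the wrong way round. From $f\in ideal(g)$ one gets $f\le ng$ for some $n$, hence $g(x)=0\Rightarrow f(x)=0$, i.e.\ $Z(g)\subseteq Z(f)$; and the W\'ojcicki-type result quoted from \cite{DLV} gives the converse in this same direction. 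So the correct equivalence is
\[
O(f)\subseteq O(g)\iff f\in ideal(g)\iff Z(g)\subseteq Z(f),
\]
and with this the map $O(f)\mapsto Z(f)$ is already order-reversing, hence the anti-isomorphism claimed in the corollary, with no repair needed. You can also see this from the lattice identities: $O(f)\cup O(g)=O(f\vee g)$ is sent to $Z(f\vee g)=Z(f)\cap Z(g)$, so joins go to meets.
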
 

Now we want to characterize the spectrum of free $\V(C)$-algebras in a way analogous to free MV-algebras. 

\begin{definition}
A topological space $X$ is called cylinder fan based if: 
\begin{enumerate}
\item $X$ is spectral,
\item  for some cardinal $k$, possibly infinite, $\overset{\circ}{K}(X)$ is a lattice anti isomorphic to the lattice $Cyfan(k)$ of cylinder fans  of $\Delta(\mathbb R)^k$.
 \end{enumerate}\end{definition}

With a proof similar to Theorem \ref{thm:fond} we can prove: 

\begin{theorem}\label{thm:fan}  A topological space $X$ is homeomorphic to $Spec(Free(\V(C))(k))$ for some $k$ if and only if it is cylinder fan based.
\end{theorem}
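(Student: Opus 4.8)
The plan is to mirror exactly the proof of Theorem \ref{thm:fond}, replacing the lattice of cylinder rational polyhedra $Cypol(k)$ by the lattice $Cyfan(k)$ of cylinder fans in $\Delta(\mathbb R)^k$, and replacing the free MV-algebra $Free(k)$ by the free $\V(C)$-algebra $Free(\V(C),k)$. First I would establish the forward direction: given $X \cong Spec(Free(\V(C),k))$, I must check that $X$ is spectral and that $\overset{\circ}{K}(X)$ is anti-isomorphic to $Cyfan(k)$. Spectrality follows verbatim from the arguments of Section \ref{Spectra} (Lemma \ref{lemm}, Propositions \ref{pr}, \ref{pro}, \ref{prof}, Theorem \ref{thm:kospec}): none of those proofs used freeness in the full variety $MV$ rather than in $\V(C)$ — they only used that the algebra is an MV-algebra with the relevant finite-generation properties of principal ideals, and compactness of $Spec$ used only that $1$ is a finite $\oplus$-sum of elements from the covering ideals, which holds in any MV-algebra. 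The anti-isomorphism $\overset{\circ}{K}(X) \cong Cyfan(k)^{op}$ is precisely the content of Lemma \ref{lemma:ogzg} together with its Corollary: the compact open sets of $Spec(Free(\V(C),k))$ are the $O(f)$, the map $O(f) \mapsto Z(f)$ is well-defined and order-reversing by Lemma \ref{lemma:ogzg}, and the $Z(f)$ range exactly over the cylinder fans by the Corollary to Lemma \ref{lemma:zerofan}.

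For the converse, suppose $X$ is cylinder fan based, so $X$ is spectral and $\overset{\circ}{K}(X)$ is anti-isomorphic to $Cyfan(k)$ for some cardinal $k$. By the forward direction, $Spec(Free(\V(C),k))$ is also spectral with $\overset{\circ}{K}(Spec(Free(\V(C),k)))$ anti-isomorphic to the same lattice $Cyfan(k)$. Hence $\overset{\circ}{K}(X) \cong \overset{\circ}{K}(Spec(Free(\V(C),k)))$ as lattices. By Stone duality between spectral spaces and bounded distributive lattices (\cite{S1}), two spectral spaces with isomorphic lattices of compact open sets are homeomorphic, so $X \cong Spec(Free(\V(C),k))$, which is exactly what is required.

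The only point requiring genuine care — and the main obstacle — is confirming that the spectrality arguments of Section \ref{Spectra}, originally stated for $Free(k)$ in the full variety of MV-algebras, transfer to $Free(\V(C),k)$. The potentially delicate step is compactness of $Spec(Free(\V(C),k))$: the proof of Theorem \ref{thm:kospec}(i) used that $1$ lies in the join ideal $\oplus I_\alpha$ and hence $1 = i_{\alpha_1} \oplus \cdots \oplus i_{\alpha_k}$, which is purely a statement about ideals in an MV-algebra and so goes through unchanged. Likewise soberness (Proposition \ref{pr}) and the existence of a basis of compact opens (Propositions \ref{pro}, \ref{propo}) only use the MV-algebra structure. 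I would therefore just remark that the proof of Theorem \ref{thm:kospec} applies \emph{mutatis mutandis} to $Free(\V(C),k)$, the role of the lattice of cylinder rational polyhedra being played by $Cyfan(k)$ via Lemma \ref{lemma:ogzg} and the corollary to Lemma \ref{lemma:zerofan}, and then invoke Stone duality exactly as in Theorem \ref{thm:fond}. A brief version of the write-up:

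\begin{proof}
By the corollary to Lemma \ref{lemma:zerofan}, the zerosets $Z(f)$ of MV-polynomials on $\Delta(\mathbb R)^k$ are exactly the cylinder fans, and by Lemma \ref{lemma:ogzg} the assignment $O(f) \mapsto Z(f)$ is a well-defined order-reversing bijection from $\overset{\circ}{K}(Spec(Free(\V(C),k)))$ to $Cyfan(k)$; thus it is a lattice anti-isomorphism. That $Spec(Free(\V(C),k))$ is spectral follows by the same argument as Theorem \ref{thm:kospec}, since that proof used only the MV-algebra structure (in particular, $1 \in \oplus I_\alpha$ forces $1$ to be a finite $\oplus$-sum of elements of finitely many $I_\alpha$'s, giving compactness, while soberness and the basis of compact opens are Propositions \ref{pr}, \ref{pro}). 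Hence every space homeomorphic to $Spec(Free(\V(C),k))$ is cylinder fan based. Conversely, if $X$ is cylinder fan based with $\overset{\circ}{K}(X)$ anti-isomorphic to $Cyfan(k)$, then $\overset{\circ}{K}(X) \cong \overset{\circ}{K}(Spec(Free(\V(C),k)))$, and since both spaces are spectral, Stone duality (\cite{S1}) yields $X \cong Spec(Free(\V(C),k))$.
\end{proof}
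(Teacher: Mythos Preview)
Your proposal is correct and follows exactly the approach the paper intends: the paper merely states that the proof is ``similar to Theorem \ref{thm:fond}'', and you have spelled out precisely that argument, invoking Lemma \ref{lemma:ogzg} and the corollary to Lemma \ref{lemma:zerofan} for the forward direction and Stone duality for the converse. Your additional care in verifying that the spectrality arguments of Section \ref{Spectra} transfer to $Free(\V(C),k)$ is appropriate and more explicit than the paper itself.
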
 

\begin{corollary}\label{cor:vcspec} A topological space is the spectrum of a $\V(C)$-algebra if and only if it is a closed subset of a cylinder fan based space.
\end{corollary}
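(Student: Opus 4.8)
The plan is to mirror exactly the argument used for Corollary on spectra of MV-algebras after Theorem~\ref{thm:fond}, transported to the subvariety $\V(C)$. The key fact we may invoke is that spectra of $\V(C)$-algebras are precisely the closed subsets of spectra of free $\V(C)$-algebras, which follows from Proposition~\ref{i} applied in the variety $\V(C)$: every $\V(C)$-algebra $A$ has a presentation $A = Free(\V(C),k)/I$ for some cardinal $k$ and some $\V(C)$-ideal $I$, and by (the $\V(C)$-analogue of) Proposition~\ref{i}(iii) we have $Spec(A)\cong V(I)$, a closed subset of $Spec(Free(\V(C),k))$, with the induced Zariski topology.

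First I would argue the ``only if'' direction. Let $X = Spec(A)$ for a $\V(C)$-algebra $A$. Pick a presentation $A\cong Free(\V(C),k)/I$; such a presentation exists because in any variety every algebra is a quotient of a free algebra modulo a (congruence, hence here an) ideal. By the homeomorphism $Spec(A)\cong V(I)$ just recalled, $X$ is homeomorphic to the closed set $V(I)\subseteq Spec(Free(\V(C),k))$. By Theorem~\ref{thm:fan}, $Spec(Free(\V(C),k))$ is a cylinder fan based space, so $X$ is homeomorphic to a closed subset of a cylinder fan based space, as required.

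For the ``if'' direction, suppose $X$ is homeomorphic to a closed subset $Y$ of a cylinder fan based space $T$. By Theorem~\ref{thm:fan}, $T\cong Spec(Free(\V(C),k))$ for some $k$, so we may assume $Y$ is a closed subset of $Spec(Free(\V(C),k))$. Every closed subset of a spectrum of an algebra $B$ in a variety has the form $V(I)$ for an ideal $I$ of $B$ (taking $I=\bigcap Y$), and by the $\V(C)$-version of Proposition~\ref{i}(iii), $V(I)\cong Spec(Free(\V(C),k)/I)$. Since $Free(\V(C),k)/I$ is a quotient of a $\V(C)$-algebra it lies in $\V(C)$, so $X\cong Spec$ of a $\V(C)$-algebra.

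I do not expect any serious obstacle: the proof is a routine translation of the MV-algebra case. The only point requiring a little care is verifying that Proposition~\ref{i} and the correspondence between closed subsets of the spectrum and quotient algebras go through verbatim in the subvariety $\V(C)$ rather than in the full variety $MV$; this is immediate because that proposition only uses that ideals of $B$ correspond to congruences and that $\V(C)$ is closed under quotients and contains free objects, all of which hold. Thus the corollary follows exactly as stated, in complete parallel with the criterion for spectra of arbitrary MV-algebras.
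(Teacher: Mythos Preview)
Your proposal is correct and follows exactly the approach the paper intends: the corollary is stated without an explicit proof, being the $\V(C)$-analogue of the criterion derived after Theorem~\ref{thm:fond}, and your argument spells out precisely that analogy using Theorem~\ref{thm:fan} together with the presentation $A\cong Free(\V(C),k)/I$ and the homeomorphism $Spec(A)\cong V(I)$ from Proposition~\ref{i}(iii).
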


\subsection{A functor from $\V(C)$-algebras to topological spaces}\label{sect:equiv}

In the previous sections we introduced a category SIC of spectra in context, related to the category of presented MV-algebras. The same can be done for every Komori variety of MV-algebras, however here we limit ourselves to $\V(C)$. We can call $\V(C)_p$ the category of presented $\V(C)$- algebras, that is pairs $(F,I)$ where $F$ is a free $\V(C)$-algebra and $I$ is an ideal of $F$. 

 Then it is natural to consider the category $\V(C)$-SIC of $V(C)$-spectra in context, whose objects are triples $(X,C,m)$ where $X$ is a fan based space, $C$ is a spectral space and $m$ is a spectral monomorphism from $C$ to $X$ preserving closed sets. Note that fan based spaces coincide with the spectra of free $\V(C)$-algebras, and closed subsets of fan based spaces coincide with spectra of $\V(C)$-algebras.  Then, similarly to Section \ref{top}, we have a functor from $\V(C)_p$ to $\V(C)$-SIC. 

\section{The case of $\V(K_m)$}\label{komori}

What we did for the variety $\V(C)$ can be done for every variety $\V(K_m)$ for every $m$. In particular, here we generalize section 
\ref{VC}. 

First let us denote $Free(m,k)$ the free MV-algebra in $\V(K_m)$ over $k$ elements. Then let us denote 
$$\Delta_m(\mathbb R)=\Gamma(\mathbb Z\ lex\ \mathbb R,(m,0));$$ this is an MV-algebra which generates $\V(K_m)$ (see \cite[Theorem 2.1]{DLV}). We can say that $\Delta_m(\mathbb R)$  is the continuous analogue of $K_m$.

Hence, the free $\V(K_m)$ algebra over a set $I$ can be defined as an MV-algebra of polynomial functions from $\Delta_m(\mathbb R)^I$ to $\Delta_m(\mathbb R)$. 

Like in $\V(C)$ there is a lattice isomorphism between compact open sets in the Zariski topology over $Spec(Free(m,I))$ and zerosets of polynomials in $\Delta_m(\mathbb R)^I$; more precisely, zerosets of polynomials can be  characterized as what we call $m$-fans, and the definition of an $m$-fan is as follows. 

If $F$ is a finite set, then an $m$-fan in $\Delta_m(\mathbb R)^F$ is defined as follows. Each radical class $r$ of $\Delta_m(\mathbb R)^F$ is in a natural bijection $\beta_r$ with a union of quadrants of $\mathbb R^F$. Now an $m$-fan in $\Delta_m(\mathbb R)^F$ is a subset which, in every radical 
class $r$, is the image via $\beta_r$ of a fan in the corresponding union of quadrants. 

As usual we can define a cylinder $m$-fan as the natural generalization of $m$-fan in possibly infinite dimensions. Lemma \ref{lemma:ogzg} transfers verbatim. 

\begin{definition}
A topological space $X$ is called $m$-fan based if: 
\begin{enumerate}
\item $X$ is spectral; 
\item the compact open sets form a lattice anti isomorphic to the lattice $mfan(k)$ of cylinder $m$-fans  of $\Delta_m(\mathbb R)^k$. 
 \end{enumerate}
\end{definition}

With a proof similar to Theorem \ref{thm:fond} we can prove: 

\begin{theorem}\label{thm:mfan}  A topological space $X$ is homeomorphic to $Spec(Free(m,k))$ for some $k$ if and only if it is $m$-fan  based.
\end{theorem}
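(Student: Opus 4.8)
The plan is to mirror exactly the argument used for Theorem \ref{thm:fond}, replacing the lattice $Cypol(k)$ of cylinder rational polyhedra by the lattice $mfan(k)$ of cylinder $m$-fans throughout. The two ingredients needed are: first, that $Spec(Free(m,k))$ is a spectral space whose lattice of compact open sets is anti-isomorphic to $mfan(k)$; second, that Stone duality for spectral spaces lets us recover the space from that lattice.

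For the forward direction I would first recall, from the discussion preceding the statement, that $Free(m,k)$ can be realized as the MV-algebra of polynomial functions from $\Delta_m(\mathbb R)^k$ to $\Delta_m(\mathbb R)$. Then I would invoke the analogue of Theorem \ref{thm:kospec}: the proof that $Spec(A)$ is spectral for $A=Free(k)$ used only general MV-algebraic facts (Lemmas \ref{lemm}, \ref{propo}, Propositions \ref{pr}, \ref{pro}) together with the observation that $1$ is a finite $\oplus$-sum of generators; all of this goes through verbatim for $Free(m,k)$, so $Spec(Free(m,k))$ is spectral. The identification $\overset{\circ}{K}(Spec(Free(m,k)))\cong mfan(k)$ (up to anti-isomorphism) follows from Lemma \ref{lemma:ogzg}, which the text states transfers verbatim, together with the fact that zerosets of polynomials in $\Delta_m(\mathbb R)^k$ are precisely the cylinder $m$-fans: the map $O(f)\mapsto Z(f)$ is well-defined and order-reversing bijective by the equivalence of (1), (2), (3) in the transferred Lemma \ref{lemma:ogzg}. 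Hence $X\cong Spec(Free(m,k))$ implies $X$ is $m$-fan based.

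For the converse, suppose $X$ is $m$-fan based. By definition $X$ is spectral and $\overset{\circ}{K}(X)$ is anti-isomorphic to $mfan(k)$ for some $k$. On the other hand $\overset{\circ}{K}(Spec(Free(m,k)))$ is also anti-isomorphic to $mfan(k)$, so the two lattices $\overset{\circ}{K}(X)$ and $\overset{\circ}{K}(Spec(Free(m,k)))$ are isomorphic as bounded distributive lattices. By Stone duality between spectral spaces and bounded distributive lattices (\cite{S1}), two spectral spaces with isomorphic lattices of compact open sets are homeomorphic, so $X\cong Spec(Free(m,k))$, as desired.

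I expect no genuine obstacle here, since this is a routine transfer of the already-established Theorem \ref{thm:fond}; the only point requiring a little care is making sure the two auxiliary facts — spectrality of $Spec(Free(m,k))$ and the lattice anti-isomorphism $\overset{\circ}{K}(Spec(Free(m,k)))\cong mfan(k)$ — are genuinely available in the $\V(K_m)$ setting. The first is purely MV-algebraic and uses nothing special about $\V(K_m)$. The second rests on the promised verbatim transfer of Lemma \ref{lemma:ogzg} and on the characterization of polynomial zerosets in $\Delta_m(\mathbb R)^I$ as cylinder $m$-fans, both of which reduce to the cited results \cite[Theorem 2.1]{DLV} and \cite[Lemma 6.1]{DLV}; the mild subtlety is that radical classes of $\Delta_m(\mathbb R)^F$ now correspond to \emph{unions} of quadrants of $\mathbb R^F$ rather than single quadrants, but this is already built into the definition of $m$-fan, so no extra work is needed.
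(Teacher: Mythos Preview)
Your proposal is correct and follows exactly the approach the paper indicates: the paper itself simply says ``With a proof similar to Theorem \ref{thm:fond} we can prove'' and gives no further details, so your explicit unpacking of the two ingredients (spectrality plus the lattice anti-isomorphism via the transferred Lemma \ref{lemma:ogzg}, followed by Stone duality) is precisely the intended argument.
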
 

\begin{corollary} A topological space is the spectrum of a $\V(K_m)$-algebra if and only if it is a closed subset of an $m$-fan based space.
\end{corollary}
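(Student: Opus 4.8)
The plan is to mirror the argument pattern already used for $\V(C)$ in Corollary \ref{cor:vcspec}, transported to the general Komori case via the $m$-fan machinery. The statement to be proved is: a topological space $X$ is the spectrum of a $\V(K_m)$-algebra if and only if $X$ is a closed subset of an $m$-fan based space. The two directions are handled separately, and both are short once Theorem \ref{thm:mfan} is in hand.

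For the forward direction, suppose $X = Spec(A)$ for some $A$ in $\V(K_m)$. As noted in universal-algebra generality (and recalled early in the paper), $A$ is a quotient $Free(m,k)/I$ of a free $\V(K_m)$-algebra over a suitable cardinal $k$, for some ideal $I$. By Proposition \ref{i}(iii), applied inside the variety $\V(K_m)$ (the proof there is purely order-theoretic and transfers verbatim, since $\V(K_m)$ is closed under quotients and the prime ideals of $A$ are exactly the $P/I$ for prime $P \supseteq I$), $Spec(A)$ is homeomorphic to $V(I) \subseteq Spec(Free(m,k))$ with the induced Zariski topology, and $V(I)$ is closed. By Theorem \ref{thm:mfan}, $Spec(Free(m,k))$ is $m$-fan based. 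Hence $X$ is homeomorphic to a closed subset of an $m$-fan based space.

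For the converse, suppose $X$ is (homeomorphic to) a closed subset of an $m$-fan based space $T$. By Theorem \ref{thm:mfan}, $T \cong Spec(Free(m,k))$ for some $k$. Thus $X$ corresponds to a closed subset of $Spec(Free(m,k))$, which (since $Spec$ is sober, being spectral) has the form $V(I)$ for $I = \bigcap X$ the intersection of the primes in $X$, an ideal of $Free(m,k)$. Then $Free(m,k)/I$ is a $\V(K_m)$-algebra, and by Proposition \ref{i}(iii) again, $Spec(Free(m,k)/I) \cong V(I) \cong X$. So $X$ is the spectrum of a $\V(K_m)$-algebra.

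The only point needing a word of care — and hence the main obstacle, though a mild one — is the legitimacy of using Theorem \ref{thm:mfan} and the quotient homeomorphism of Proposition \ref{i} inside the variety $\V(K_m)$ rather than in $MV$: one must check that every $\V(K_m)$-algebra is a quotient of a \emph{free $\V(K_m)$-algebra} (true since $\V(K_m)$ is a variety, hence has free objects and every algebra is a quotient of a free one), and that closed subsets of $Spec(Free(m,k))$ are exactly the $V(I)$ for ideals $I$ of $Free(m,k)$ (true by solvability, i.e. $Spec$ being spectral hence sober, combined with $I = \bigcap V(I)$). Both facts are already implicitly established in the paper for the free MV-algebra case and the $\V(C)$ case, and the arguments are identical here; so the corollary follows immediately and no new calculation is required.
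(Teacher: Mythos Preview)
Your proof is correct and follows exactly the route the paper intends: the corollary is stated without proof, being the direct analogue of Corollary \ref{cor:vcspec}, and your argument via Theorem \ref{thm:mfan} together with Proposition \ref{i}(iii) is precisely the intended one. One small quibble: the fact that every closed subset of $Spec(Free(m,k))$ has the form $V(I)$ is immediate from the definition of the Zariski topology (closed sets are intersections of basic closed sets $V(a)$, hence equal $V(I)$ for $I$ the ideal they generate), so invoking sobriety there is unnecessary.
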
 

Also Section \ref{sect:equiv} can be generalized and we have a functor from presented $\V(K_m)$-algebras to a category $\V(K_m)$-SIC, whose objects are triples $(X,C,m)$ where $X$ is an $m$-fan based space, $C$ is a spectral space and $m:C\to X$ is a monomorphic spectral map preserving closed sets. 

We now give an equivalent form of Theorem \ref{theor} valid whenever instead of $C$ we consider the Komori algebra $K_m$. 

\begin{definition} The rank of an MV-algebra $A$ is the cardinality of $A/Rad(A)$,  that is, the number of
radical classes.

 Let us call rank of a prime ideal $I\in Spec(A)$ the rank of the chain $A/I$. 

\end{definition}

\begin{lemma} A prime ideal $P$ of an MV-algebra $A$ has rank $k$ if and only if the maximal ideal $M_P$ containing $P$ has rank $k$. 
\end{lemma}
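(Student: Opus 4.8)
The statement relates the rank of a prime ideal $P$ (the cardinality of the chain $A/P$ modulo its radical) to the rank of the unique maximal ideal $M_P$ above $P$. The plan is to exploit the canonical surjection $A/P \twoheadrightarrow A/M_P$ together with the fact that $A/P$ is a totally ordered MV-algebra, for which the radical is precisely the (unique) maximal ideal. First I would recall that since $P$ is prime, $A/P$ is an MV-chain, so it is local: it has a single maximal ideal, which equals its radical $\mathrm{Rad}(A/P)$, and this maximal ideal corresponds to $M_P/P$. Hence $A/\mathrm{Rad}(A/P) \cong (A/P)\big/(M_P/P) \cong A/M_P$. But $A/M_P$ is itself an MV-chain, so its radical is its maximal ideal $\{0\}$, i.e.\ $A/M_P$ is simple, whence $(A/M_P)\big/\mathrm{Rad}(A/M_P) \cong A/M_P$. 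Comparing, the rank of $P$ (namely $|A/\mathrm{Rad}(A/P)| = |A/M_P|$) equals the rank of $M_P$ (namely $|A/\mathrm{Rad}(A/M_P)| = |A/M_P|$).

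More carefully, the key steps in order are: (1) observe $A/P$ is an MV-chain and invoke the standard fact that in a totally ordered MV-algebra there is a unique maximal ideal coinciding with the radical (this is the ``local'' property, and MV-chains are local as noted in the excerpt's discussion preceding Definition of local MV-algebras); (2) identify this unique maximal ideal of $A/P$ as $M_P/P$, using the correspondence of Proposition~\ref{i}(i) between ideals of $A$ containing $P$ and ideals of $A/P$, applied with $J = P$ (this forces $M_P$ to be the unique maximal ideal of $A$ over $P$, which is why $M_P$ is well-defined); (3) apply the second isomorphism theorem for MV-algebras to get $(A/P)\big/(M_P/P)\cong A/M_P$, so $A/\mathrm{Rad}(A/P)\cong A/M_P$; (4) note $A/M_P$ is simple (being an MV-chain with trivial radical, since $M_P$ is maximal), so $A/\mathrm{Rad}(A/M_P) = A/M_P$; (5) conclude that both ranks equal $|A/M_P|$, so they coincide, and in particular one is $k$ iff the other is.

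The only mildly delicate point, and the place I would be most careful, is step (1): the claim that an MV-chain has its radical equal to its unique maximal ideal. This is standard (e.g.\ \cite{CDM}), following from the fact that a totally ordered MV-algebra is local and that in a local MV-algebra the unique maximal ideal is exactly $\{a : \mathrm{ord}(a) = \infty\} = \mathrm{Rad}$; it is also consistent with the excerpt's remark that ``MV-chains'' lie among the local MV-algebras. No genuine obstacle is expected — the argument is essentially a bookkeeping exercise with the correspondence and isomorphism theorems — so the ``hard part'' amounts only to stating the relevant classical facts about chains and radicals precisely enough that the cardinalities line up.
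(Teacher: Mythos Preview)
Your proposal is correct and follows essentially the same approach as the paper: both use the canonical surjection $A/P \twoheadrightarrow A/M_P$ and the fact that $A/P$ is an MV-chain whose radical equals its unique maximal ideal $M_P/P$, yielding $(A/P)/\mathrm{Rad}(A/P) \cong A/M_P$. Your version is simply more explicit about this identification and the second isomorphism theorem, whereas the paper treats the two directions separately and leaves these steps implicit.
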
 

\begin{proof} Let $P$ be a prime ideal of $A$. Suppose $P$ has rank $k$. Then $A/P$ has rank $k$ and is totally ordered. Since $P\subseteq M_P$, we have a surjective homomorphism $A/P\to A/M_P$. Since $P$ has rank $k$ and $M_P$ is maximal, $A/M_P$ is simple and has rank $k$.

 Conversely, suppose $A/M_P$ has rank $k$. Since there is a surjective homomorphism $A/P\to A/M_P$, $A/P$ must have also rank $k$. 
\end{proof} 

\begin{theorem}\label{theori} Let $A=Free(k)/I$ be an MV-algebra and $m\in\mathbb N$. The following  items are equivalent:

\begin{itemize} 
\item $A\in \V(K_m)$. 

\item Every ideal $P\in Spec(A)$ has rank divisor of $m$.

 \item Every ideal  $M\in Max(A)$ has rank divisor of $m$. 

 \item Every element of $V(I)$ has rank divisor of $m$. \end{itemize} \end{theorem}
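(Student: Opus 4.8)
The plan is to mirror the structure of the proof of Theorem~\ref{theor}, replacing ``perfect'' by ``rank a divisor of $m$'' throughout, and to reduce the statement to the known equational description of $\V(K_m)$ via Komori's classification. First I would establish the equivalence of the first two items. If $A\in\V(K_m)$, then every quotient $A/P$ by a prime ideal lies in $\V(K_m)$ and is totally ordered; the totally ordered members of $\V(K_m)$ are, by Komori's theorem, exactly the subalgebras of $K_m=\Gamma(\mathbb Z\,lex\,\mathbb Z,(m,0))$ and of the finite chains $\L_d$ with $d\mid m$, so in every case the rank of $A/P$ (the cardinality of $(A/P)/Rad(A/P)$) is a divisor of $m$. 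Conversely, if every $A/P$ has rank dividing $m$, then $A/P$ is a totally ordered MV-algebra whose radical quotient is $\L_d$ with $d\mid m$; such a chain embeds into $\Delta_m(\mathbb R)$ (or into $\L_d$ when the radical is trivial), hence lies in $\V(K_m)$, and since $A$ embeds into the product $\prod_{P\in Spec(A)}A/P$, we get $A\in\V(K_m)$.

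Next I would handle the equivalence of the second and third items, which is immediate from the Lemma just stated: a prime $P$ has rank $k$ iff the unique maximal $M_P\supseteq P$ has rank $k$, so ``every prime has rank dividing $m$'' is equivalent to ``every maximal has rank dividing $m$''. Finally, for the last item, I would invoke Proposition~\ref{i}(iii): $Spec(A/I)$ is homeomorphic to $V(I)\subseteq Spec(Free(k))$, with the prime ideals of $A=Free(k)/I$ corresponding to the primes $P/I$ for $P\in V(I)$, and the chain $(Free(k)/I)/(P/I)\cong Free(k)/P$; hence ``every element of $Spec(A)$ has rank dividing $m$'' translates verbatim into ``every element of $V(I)$ has rank dividing $m$''.

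The main obstacle I anticipate is the careful justification that the totally ordered members of $\V(K_m)$ are precisely the chains whose rank divides $m$ — in the forward direction this needs Komori's classification of totally ordered algebras in $\V(K_m)$ (they are generated, as a variety, by $K_m$ and the $\L_d$, $d\mid m$, and one must check no chain of rank not dividing $m$ appears), and in the backward direction one needs that every chain with radical quotient $\L_d$, $d\mid m$, embeds into $\Delta_m(\mathbb R)$, which is the content of the results of \cite{DLV} quoted earlier (that $\Delta_m(\mathbb R)$ generates $\V(K_m)$) together with the fact that $\Delta_m(\mathbb R)$ is a ``universal'' chain for rank-$d$ quotients with $d\mid m$. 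Once this algebraic fact is pinned down, the remaining steps are bookkeeping: the rank lemma gives prime $\leftrightarrow$ maximal, and Proposition~\ref{i} gives $Spec(A)\leftrightarrow V(I)$, so all four items collapse into one.
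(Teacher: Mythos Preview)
Your overall architecture---mirror Theorem~\ref{theor}, invoke the rank lemma for (2)$\Leftrightarrow$(3), and use Proposition~\ref{i}(iii) for the passage to $V(I)$---is exactly the paper's. The divergence, and the trouble, is entirely in how you justify (1)$\Leftrightarrow$(2), and both directions of your argument contain a genuine error.

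For (1)$\Rightarrow$(2) you assert that the chains in $\V(K_m)$ are ``exactly the subalgebras of $K_m$ and of the finite chains $\L_d$ with $d\mid m$''. This is false: $\Delta_m(\mathbb R)$ is an uncountable chain in $\V(K_m)$, and so is $\Gamma(\mathbb Z\ lex\ G,(m,0))$ for any totally ordered abelian group $G$; none of these are subalgebras of the countable $K_m$. Komori's theorem classifies sub\emph{varieties} of $MV$, not the chains lying inside a given one. The paper avoids any such classification: if $B$ is a chain in $\V(K_m)$ then $B/Rad(B)$ is simple and still in $\V(K_m)$, hence is some $\L_d$ with $d\mid m$, and that is the rank condition. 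One line, no structure theory for chains needed.

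For (2)$\Rightarrow$(1) your proposed mechanism---that a chain whose radical quotient is $\L_d$ with $d\mid m$ \emph{embeds} into $\Delta_m(\mathbb R)$---is also false. Take $m=1$ and $B=\Gamma(\mathbb Z\ lex\ (\mathbb Z\ lex\ \mathbb Z),(1,0,0))$: this is perfect, hence in $\V(K_1)=\V(C)$, but its radical is the positive cone of the non-Archimedean group $\mathbb Z\ lex\ \mathbb Z$, which cannot embed into $\mathbb R$; so $B$ does not embed into $\Delta(\mathbb R)$. The fact that $\Delta_m(\mathbb R)$ \emph{generates} $\V(K_m)$ does not make it an embedding target for every chain in the variety. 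The paper at this point simply asserts ``then $A/P\in\V(K_m)$''; if you want to justify it honestly, the route is through the equational axiomatization of $\V(K_m)$ (the Komori identities hold in a chain precisely when its radical quotient divides $\L_m$), not through an embedding.
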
 
\begin{proof}
 Let us first prove that the first two points are equivalent. 

If $A$ is in $\V(K_m)$, then all its quotients modulo prime ideals are in $\V(K_m)$ and are totally ordered. Notice  that every totally  ordered MV-algebra $B$ in
 $\V(K_m)$ has rank dividing $m$. In fact, $B/Rad(B)$ is simple and is in $\V(K_m)$, so it has $k$ elements with $k|m$.

Conversely, suppose every prime $P$ has rank divisor of $m$. Then $A/P$ is in $\V(K_m)$. Now $A$ is a subdirect product of elements of $\V(K_m)$ so $A$ belongs to $\V(K_m)$ itself. 

The second and third points are equivalent by the previous lemma. 

The third and fourth points are also equivalent. In fact, an element of $Spec(A)$ has rank dividing $n$ if and only if the corresponding maximal in $Spec(A)$ 
has rank dividing $m$,  if and only if the corresponding maximal 
in $V(I)$ has rank dividing $m$. \end{proof}

\section{Main results}\label{group} 

This section is devoted to prime  spectra of $\ell$-groups:  they are the generalised spectral spaces which are completely normal, i.e., if $x$ and $y$ are in
the closure of a point $z$, then either $x$ is in the closure of $y$ or $y$ is in the closure of $x$.
Wehrung proved that the above properties characterise the second-countable
spectra of  $\ell$-groups and  there cannot be any first order
axiomatisation of the distributive lattices that are dual to spectra of $\ell$-groups.
Moreover, observe that from spectra of $\ell$-groups we cannot recover  $\ell$-groups, e.g. $\Z$ and $\R$ have the same prime spectrum. Other contribution to the prime spectra of $\ell$-groups is the paper by  Panti  \cite{P1} where  the prime ideals of finitely generated free $\ell$-groups are
characterised as the sets of piecewise homogeneous  linear functions with integer coefficients from $\R^n$ to $\R$ 
that vanish on a cone determined by a tuple of vectors.

 Here, using  the equivalence $\Delta$ between the category of $\ell$-groups and the category of perfect MV-algebras from \cite{DL1} we can characterize prime spectra of $\ell$-groups as follows:

\begin{theorem}  A topological space $X$ is the spectrum of an $\ell$-group $G$ if and only if there is a spectrum $Y$ of a perfect MV-algebra $A$ such that 
$X=Y\setminus \{M\}$, where $M$ is the only closed point in $Y$. 
\end{theorem}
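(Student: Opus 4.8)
The plan is to transfer the problem through the equivalence $\Delta$ of \cite{DL1} between the category of $\ell$-groups and the category of perfect MV-algebras, and to understand how the prime spectrum changes under $\Delta$. First I would recall that if $G$ is an $\ell$-group, then $A=\Delta(G)=\Gamma(\Z\;lex\;G,(1,0))$ is a perfect MV-algebra, and conversely every perfect MV-algebra arises this way. The key structural observation is that a perfect MV-algebra $A$ has a unique maximal ideal, namely its radical $Rad(A)$ (this is the content of the remark attributed to \cite{BDL}: $A=Rad\,A\cup\neg Rad\,A$), so $Spec(A)$ has exactly one closed point, which I will call $M$. Thus, granting that I can identify $Spec(G)$ with $Spec(\Delta(G))\setminus\{M\}$, one direction is immediate: given an $\ell$-group $G$, take $A=\Delta(G)$, $Y=Spec(A)$, and then $X=Spec(G)=Y\setminus\{M\}$ where $M=Rad(A)$ is the only closed point of $Y$. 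For the converse, given $Y=Spec(A)$ with $A$ perfect and $M$ its unique closed point, set $G=\Delta^{-1}(A)$; then $Spec(G)\cong Y\setminus\{M\}$ by the same identification.

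\textbf{The main step: identifying $Spec(G)$ with $Spec(\Delta(G))\setminus\{M\}$.} I would establish a homeomorphism between the prime spectrum of the $\ell$-group $G$ (with its Zariski-type topology on $\ell$-ideals, as in \cite{DFT}) and the subspace of $Spec(A)$ consisting of the non-maximal prime ideals, where $A=\Delta(G)$. Concretely, $\Delta(G)=\{(0,g):g\geq 0,\;g\leq\text{(no bound)}\}\cup\{(1,g):g\leq 0\}$ sits inside $\Z\;lex\;G$ as $[(0,0),(1,0)]$; its radical is $\{(0,g):g\in G,\;g\geq 0\}\cong G^{+}$. The prime ideals of $A$ distinct from $M=Rad(A)$ are exactly the ones contained in $Rad(A)$, and these should correspond bijectively to the prime $\ell$-ideals of $G$ via $P\mapsto\{g\in G: (0,|g|)\in P\}$ (or by noting that a prime ideal $P\subsetneq Rad(A)$ of $A$ yields a prime $\ell$-ideal of $\Z\;lex\;G$ living inside $\{0\}\;lex\;G$, hence a prime $\ell$-ideal of $G$), with inverse the ideal of $A$ generated by the image of a prime $\ell$-ideal of $G$ under $g\mapsto(0,g^{+})$. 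I would check this bijection is order-preserving in both directions, hence a homeomorphism for the respective Zariski topologies, since those topologies are determined by the inclusion order on primes together with the compact opens (here generated by the $O(a)$ / principal ideals). Functoriality of $\Delta$ and the fact that it is an equivalence guarantee this identification is natural, so it respects homeomorphism type on both sides.

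\textbf{Assembling the equivalence.} With the homeomorphism $Spec(G)\cong Spec(\Delta(G))\setminus\{M\}$ in hand, both implications follow formally. If $X=Spec(G)$, put $A=\Delta(G)$ (perfect), $Y=Spec(A)$, and $M=Rad(A)$: this is the unique closed point of $Y$ because $A$ has a unique maximal ideal, and $X=Y\setminus\{M\}$. Conversely, if $Y=Spec(A)$ for a perfect MV-algebra $A$ and $M$ is its unique closed point — which by \cite{BDL} must be $Rad(A)$ — then $G=\Delta^{-1}(A)$ is an $\ell$-group with $Spec(G)\cong Y\setminus\{M\}$, so any $X$ homeomorphic to such a $Y\setminus\{M\}$ is the spectrum of an $\ell$-group.

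\textbf{Expected main obstacle.} The delicate point is the bookkeeping in the homeomorphism $Spec(G)\cong Spec(\Delta(G))\setminus\{M\}$: one must carefully match the lattice of $\ell$-ideals of $G$ with the lattice of MV-ideals of $A$ contained in $Rad(A)$ (including the compact/principal ones, to get the topologies and not merely the underlying sets right), and verify that the correspondence of primes is an order isomorphism. This is essentially a computation inside $\Z\;lex\;G$ using that $\ell$-ideals of a lexicographic product $\Z\;lex\;G$ lying below the ``$G$-part'' are exactly the $\ell$-ideals of $G$; once that is set up cleanly, the rest is formal. A secondary point worth stating explicitly is why $M=Rad(A)$ is genuinely the \emph{only} closed point of $Spec(A)$ when $A$ is perfect — i.e. that perfect MV-algebras are local — which is exactly the cited characterization $A=Rad\,A\cup\neg Rad\,A$.
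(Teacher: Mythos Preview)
Your proposal is correct and takes essentially the same approach as the paper: you use the $\Delta$ equivalence and set up an explicit bijection between prime $\ell$-ideals of $G$ and the non-maximal prime ideals of $\Delta(G)$, which is exactly what the paper does (the paper writes the map as $f(L)=\{(0,g)\mid g\in L\}$ and its inverse as $g(P)=\{a-0\mid a\in P\}$). Your version is in fact more careful than the paper's, which simply asserts the homeomorphism without checking the topological side; your remarks on order-preservation and compact opens fill what is arguably a gap in the paper's own argument.
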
 

\begin{proof} Given $G$, the equivalence gives us $A=\Gamma(\mathbb Z\ lex\ G,(1,0))$. There is a homeomorphism $f$ between (prime) $\ell$-ideals of $G$ and (prime) proper ideals of $A$, given by $f(L)=\{(0,g)|g\in L\}$. 

Conversely, given a perfect MV-algebra $A$, the equivalence gives the $\ell$-group  $G=\{a-b|a,b\in Rad(A)\}$ and there is a homeomorphism $g$ between proper (prime) ideals of $A$ and prime ideals of $G$ given by $g(P)=\{a-0|a\in P\}$. \end{proof} 

From Corollary \ref{cor:vcspec} it follows: 

\begin{corollary} A topological space is the spectrum of a perfect MV-algebra if and only if it is a closed subset of a cylinder fan based space with only one closed point.
\end{corollary}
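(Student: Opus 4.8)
The plan is to combine Corollary \ref{cor:vcspec} (which characterizes spectra of $\V(C)$-algebras as closed subsets of cylinder fan based spaces) with the preceding theorem on spectra of $\ell$-groups, reading the two results through the fact that perfect MV-algebras form the variety $\V(C)$ -- or more precisely, a relevant subclass of it whose spectra are exactly those closed subsets with a single closed point. First I would recall that, by the remark following the definition of perfect MV-algebras, a perfect MV-algebra is exactly a nontrivial MV-algebra generated by its infinitesimals, and that every perfect MV-algebra lies in $\V(C)$; conversely, the totally ordered members of $\V(C)$ are precisely the perfect MV-chains (this is essentially the content of the proof of Theorem \ref{theor}). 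So a perfect MV-algebra $A$ is in particular a $\V(C)$-algebra, and by Corollary \ref{cor:vcspec} its spectrum $Spec(A)$ is a closed subset of a cylinder fan based space.

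The next step is to pin down what distinguishes the spectra of \emph{perfect} MV-algebras among the spectra of all $\V(C)$-algebras. The key observation is that a perfect MV-algebra is local: it has exactly one maximal ideal, namely $Rad(A)$. Indeed, $A = Rad\,A \cup \neg Rad\,A$ forces every element outside $Rad\,A$ to have finite order, so $Rad\,A$ is the unique maximal ideal. By Theorem \ref{thm:local}, a closed subset $C = V(I)$ of $Spec(Free(k))$ is the spectrum of a local MV-algebra if and only if $C$ has exactly one closed point. Combining this with Theorem \ref{theor}, $Spec(A)$ for $A$ perfect is exactly a closed $\V(C)$-set (equivalently, a closed subset of a cylinder fan based space, every point of which is a perfect prime ideal) that has a unique closed point. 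But more is true: once $C$ is a closed subset of a cylinder fan based space (hence all its primes are perfect, by Theorem \ref{theor}) and $C$ has a unique closed point, the associated MV-algebra $A/I$ is local with all prime quotients perfect, and a local MV-algebra whose prime quotients are all perfect is itself perfect -- its unique maximal ideal is its radical, and $A/I = Rad(A/I)\cup \neg Rad(A/I)$ follows because every element is infinitely close to a Boolean and the only Booleans in a local algebra are $0,1$. This gives the converse direction.

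Putting this together: $X$ is the spectrum of a perfect MV-algebra $A$ if and only if $X$ is a closed subset of a cylinder fan based space with exactly one closed point. The only remaining wrinkle is to match this with the phrasing $X = Y\setminus\{M\}$ of Theorem \ref{thm:lspec}, but that is a separate statement; the corollary as stated simply records the ``perfect'' half, and its proof is just the displayed biconditional. I would write: ``By Corollary \ref{cor:vcspec}, the spectrum of a $\V(C)$-algebra is exactly a closed subset of a cylinder fan based space; by Theorem \ref{theor} these are the closed subsets all of whose primes are perfect ideals; and a perfect MV-algebra is precisely a local $\V(C)$-algebra, whose spectrum by Theorem \ref{thm:local} is a closed set with a single closed point; conversely any closed subset of a cylinder fan based space with a single closed point is the spectrum of a local $\V(C)$-algebra, which is therefore perfect.''

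The main obstacle I anticipate is the final implication -- that a local MV-algebra all of whose prime quotients are perfect (equivalently, all of whose maximal ideals are supermaximal, which in the local case means the unique maximal ideal is supermaximal) is genuinely perfect rather than merely in $\V(C)$. One must check that ``local $+$ in $\V(C)$'' implies ``perfect'': the unique maximal ideal $M$ must coincide with $Rad\,A$ because $Rad\,A$ is the intersection of all maximal ideals, hence equals $M$; and then $A = M\cup\neg M$ because in $\V(C)$ every element is infinitely close to an idempotent, and the only idempotents in a local algebra are $0$ and $1$, so every element is either infinitesimal or coinfinitesimal. That is the one spot where a short argument, rather than a pure citation, is needed; everything else is bookkeeping with the results already established.
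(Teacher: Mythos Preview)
Your proposal is correct and follows essentially the same route as the paper: the paper's entire proof is the single observation that an MV-algebra is perfect if and only if it is local and lies in $\V(C)$ (citing \cite[Proposition 5(5)]{BDG}), from which the statement follows immediately by combining Corollary \ref{cor:vcspec} with the ``one closed point $\Leftrightarrow$ local'' fact. You simply unpack this equivalence and supply your own short argument for ``local $+$ $\V(C)$ $\Rightarrow$ perfect'' in place of the citation, which is fine and arguably more self-contained.
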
 

\begin{proof} This follows because an MV-algebra is perfect if and only if it is $\V(C)$ and local see \cite[Proposition 5(5)]{BDG}.
\end{proof}

Summing up we have the theorem announced in the introduction: 

\begin{theorem} (Same as Theorem \ref{thm:lspec})  
A topological space $X$ is the spectrum of an abelian $\ell$-group $G$
if and only if there is a closed subset $Y$ of a cylinder fan based space $T$, 
such that $X=Y\setminus \{M\}$, where $M$ is the only closed point in $Y$.
\end{theorem}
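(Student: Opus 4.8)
The plan is to chain together the two results that immediately precede this statement. The previous theorem characterizes spectra of $\ell$-groups as spaces of the form $Y\setminus\{M\}$ where $Y$ is the spectrum of a \emph{perfect} MV-algebra and $M$ its unique closed point, and the corollary just before the final statement characterizes spectra of perfect MV-algebras as exactly the closed subsets of a cylinder fan based space that have only one closed point. So the proof is essentially a substitution: given $G$, apply the $\ell$-group theorem to get $Y=Spec(A)$ for a perfect MV-algebra $A$ with unique closed point $M$ and $X=Y\setminus\{M\}$; then apply the corollary to realize $Y$ itself as a closed subset of a cylinder fan based space $T$ (with $M$ remaining the only closed point of $Y$). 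Conversely, if $X=Y\setminus\{M\}$ for a closed subset $Y$ of a cylinder fan based space $T$ with $M$ the only closed point of $Y$, then by the corollary $Y$ is the spectrum of a perfect MV-algebra, and by the $\ell$-group theorem $X$ is the spectrum of an $\ell$-group.

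Concretely, the steps in order would be: (1) recall that, by the corollary above, ``$Y$ is the spectrum of a perfect MV-algebra'' is equivalent to ``$Y$ is a closed subset of a cylinder fan based space with exactly one closed point''; (2) recall from the $\ell$-group theorem that $X$ is the spectrum of an abelian $\ell$-group iff $X=Y\setminus\{M\}$ with $Y$ the spectrum of a perfect MV-algebra and $M$ its unique closed point; (3) combine: $X=Spec(G)$ for some $\ell$-group $G$ iff there is a closed subset $Y$ of a cylinder fan based space $T$ with a unique closed point $M$ such that $X=Y\setminus\{M\}$. One should take a moment to note the role of the word ``cylinder fan based space'' versus ``cylinder fan based \emph{space} $T$'' in the statement: this matches Corollary~\ref{cor:vcspec} verbatim, so no extra work is needed there.

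I would also make explicit one small compatibility point that a careful reader will want: the $\ell$-group theorem produces $Y=Spec(A)$ as an abstract space with a unique closed point, while the corollary produces a realization of that same $Spec(A)$ inside some $T$; one must check that the unique closed point $M$ of $Spec(A)$ is indeed still ``the only closed point in $Y$'' after the embedding into $T$ — but this is immediate since the embedding is a homeomorphism onto its (closed) image, so closed points correspond to closed points. Nothing new is proven; the theorem is a corollary of the combination of the two preceding results, and I would present it as such, essentially in one short paragraph citing the corollary and the preceding theorem.

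The main obstacle, such as it is, is purely bookkeeping: making sure that ``perfect MV-algebra'' is correctly translated into ``$\V(C)$ and local'' (which gives the single-closed-point condition via Theorem~\ref{thm:local}) and that ``cylinder fan based'' is the right ambient class for $\V(C)$-algebras (Corollary~\ref{cor:vcspec}). There is no genuine mathematical difficulty left once the two cited results are in hand; the statement is a clean composite of them, and I expect the ``proof'' to be three or four lines long, just as the authors' own proof (``Summing up we have the theorem announced in the introduction'') suggests.
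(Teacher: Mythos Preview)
Your proposal is correct and matches the paper's approach exactly: the theorem is obtained by combining the preceding theorem (spectra of $\ell$-groups are $Y\setminus\{M\}$ for $Y$ the spectrum of a perfect MV-algebra) with the preceding corollary (spectra of perfect MV-algebras are precisely closed subsets of cylinder fan based spaces with a unique closed point). The paper itself gives no further argument beyond ``Summing up we have the theorem announced in the introduction,'' so your short chaining of the two results is precisely what is intended.
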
 

\section{From spectra of free MV-algebras to logics and geometries}\label{geo}

By addressing the problem of characterising the spectra of algebras, it can be seen that certain geometries are closely related to certain logics.

We see that the zerosets of Mc Naughton functions interpreted on models with a finite number of variables turn out to be rational polyhedra of the hypercube $[0,1]^n$, and also that a certain \L ukasiewicz logic can be seen as the logic of the geometry of rational polyhedra. 

Moreover, the family of zerosets of a finitely generated free algebra forms a  distributive lattice that is isomorphic to the lattice of compact open sets of the spectrum of the free algebra. In this way, via the aforementioned isomorphism, a relationship between the spectrum of a finitely generated free algebra, the geometry of rational polyhedra, which in fact is an affine geometry, and \L ukasiewicz logic becomes apparent.

What we have obtained above can be extended to the case of free MV-algebras with an infinite number of variables, up to replacing rational polyhedra  by rational cylinders of unit hypercube of infinite dimension. This was proved in the paper \cite{DL}.

\subsection{From spectra of free MV-algebras of the variety $\V(C)$ to logic}

Likewise to the MV-case, in the variety $\V(C)$, we can realize free $\V(C)$-algebras as algebras of functions from $\Delta(\mathbb R)^n$ to $\Delta(\mathbb R)$, and the $\V(C)$-polyhedra will be the zerosets of these functions, which are subsets of $\Delta(\mathbb R)^n$. We can ask how do these sets look like. 

We can identify $\Delta(\mathbb R)^n$ with the set of tuples $(v_1,x_1),\ldots,(v_n,x_n)$ where $v_1,\ldots,v_n\in\{0,1\}$ and $x_1,\ldots,x_n$ belong to the positive quadrant of $\mathbb R^n$. 

By \cite[Theorem 3.1]{DLV}  every polynomial function from $\Delta(\mathbb R)^n$ to $\Delta(\mathbb R)$ has the form 

$$P((v_1,x_1),\ldots,(v_n,x_n))=(f(v_1,\ldots,v_n),f_{(v_1,\ldots,v_n)}(x_1,\ldots,x_n))$$ 

where $v_1,\ldots,v_n\in\{0,1\}$, $f:\{0,1\}^n\to\{0,1\}$  and each $f_{(v_1,\ldots,v_n)}(x_1,\ldots,x_n)$ is an $\ell$-polynomial defined in the positive quadrant of $\mathbb R^n$. 

So the zeroset of $P$ is the set 

$$Z(P)=\{((v_1,x_1),\ldots,(v_n,x_n))| f(v_1,\ldots,v_n)=0\ AND\ f_{(v_1,\ldots,v_n)}(x_1,\ldots,x_n)=0\}.$$

So $Z(P)$ is a disjoint union of zerosets of $\ell$-polynomials, one for each tuple $v_1,\ldots,v_n$ such that $f(v_1,\ldots,v_n)=0$.

Now it is known that zerosets of $\ell$-polynomials in the positive quadrant of $\R^n$ coincide with rational cones contained in that quadrant.

So we can write

$$(1) \ Z(P)=\{((v_1,x_1),\ldots,(v_n,x_n))| (v_1,\ldots,v_n)\in S\ AND\ (x_1,\ldots,x_n)\in C_{(v_1,\ldots,v_n)}\},$$

where $S$ is a subset of $\{0,1\}^n$ and $C_{(v_1,\ldots,v_n)}$ is a cone depending on $(v_1,\ldots,v_n)$.

Conversely, consider a set Z of the form (1). So $S\subseteq \{0,1\}^n$, and for every $v\in S$, $C_v$ is a cone. Then $C_v$ is the zeroset of an $\ell$-polynomial  $f_v$. Then there is an MV-polynomial $P:[0,1]^n\to [0,1]$ (not unique) such that $P=(P(v),f_v)$ in a neighborhood of $v$ in $[0,1]^n$ if $v\in S$ and $P=1$ in a neighborhood of $v$ in $[0,1]^n$ if $v\notin S$. Then $Z$ is the zeroset of $P$ in $\Delta(\mathbb R)^n$. 

Summing up we have:

\begin{theorem} Zerosets of polynomials in $\Delta(\mathbb R)^n$ coincide with sets of the form (1). 
\end{theorem}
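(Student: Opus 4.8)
The plan is to prove both inclusions of the claimed characterization, leaning entirely on the structure theorem for $\V(C)$-polynomials, namely \cite[Theorem 3.1]{DLV} with $m=1$, and on the classical fact that zerosets of $\ell$-polynomials in the positive quadrant of $\R^n$ are exactly the rational cones contained in that quadrant. First I would fix the identification of $\Delta(\mathbb R)^n$ with tuples $((v_1,x_1),\dots,(v_n,x_n))$, with $v_i\in\{0,1\}$ the radical-class coordinate (the $\Z$-part) and $(x_1,\dots,x_n)$ ranging in the positive quadrant of $\R^n$ (the $\R$-part). Invoking the cited description of polynomial functions, every $P:\Delta(\mathbb R)^n\to\Delta(\mathbb R)$ has the displayed form $P=(f(v),f_v(x))$ with $f:\{0,1\}^n\to\{0,1\}$ and each $f_v$ an $\ell$-polynomial on the positive quadrant. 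Taking the zeroset componentwise — both coordinates must vanish — immediately yields $Z(P)=\{((v_i,x_i))_i\mid f(v)=0\text{ AND }f_v(x)=0\}$; setting $S=f^{-1}(0)\subseteq\{0,1\}^n$ and $C_v=Z(f_v)$, a rational cone in the positive quadrant by the classical fact, gives exactly a set of the form (1). This direction is essentially bookkeeping once the structure theorem is granted.

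For the converse, I would start from an arbitrary $Z$ of the form (1): a subset $S\subseteq\{0,1\}^n$ together with, for each $v\in S$, a rational cone $C_v$ in the positive quadrant of $\R^n$. By the classical fact each $C_v$ is the zeroset of some $\ell$-polynomial $f_v$. The task is then to assemble a single MV-polynomial $P:[0,1]^n\to[0,1]$ whose induced function on $\Delta(\mathbb R)^n$ has zeroset $Z$. The idea is local-to-global: near each vertex $v$ of the cube, an MV-polynomial is determined (in the sense relevant for $\Delta(\mathbb R)$, i.e. up to its germ at $v$) by a pair consisting of a Boolean value and an $\ell$-polynomial germ; so I would prescribe $P$ to have germ $(P(v),f_v)$ at $v\in S$ and germ constantly $1$ at $v\notin S$, then appeal again to \cite[Theorem 3.1]{DLV} (now used in the surjectivity direction: every admissible family of local data at the vertices is realized by some global McNaughton function, not unique) to obtain an actual MV-polynomial $P$. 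Evaluating $P$ on $\Delta(\mathbb R)^n$ then recovers $Z$ precisely, because the value of the induced function at $((v_i,x_i))_i$ depends only on $P$'s germ at the vertex $v=(v_1,\dots,v_n)$.

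The main obstacle I anticipate is the realizability step in the converse: showing that one genuinely can glue arbitrarily prescribed vertex-germs (a Boolean value plus a rational-cone-defining $\ell$-polynomial germ at each vertex) into a single McNaughton function on $[0,1]^n$. This is where the infrastructure of \cite{DLV} does the real work — the continuity and integer-coefficient constraints of McNaughton functions impose compatibility conditions, and one must be sure that the data coming from a set of the form (1) always satisfies them (e.g. that the $\ell$-polynomials $f_v$ can be taken with integer coefficients and that their germs extend consistently across edges). I would handle this by citing \cite[Theorem 3.1]{DLV} as giving exactly the correspondence between polynomial functions on $\Delta(\mathbb R)^n$ and such families of vertex data, so that no family arising from (1) is excluded; the remaining verification that $Z(P)=Z$ is then a routine matching of the two coordinate conditions vertex by vertex, using that distinct vertices have disjoint neighborhoods in $[0,1]^n$ and hence contribute disjointly to $Z(P)$.
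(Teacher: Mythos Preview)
Your proposal is correct and follows essentially the same approach as the paper: both directions rely on \cite[Theorem 3.1]{DLV} to decompose a polynomial on $\Delta(\mathbb R)^n$ into a Boolean part $f$ and a family of $\ell$-polynomials $f_v$ indexed by vertices, together with the classical identification of zerosets of $\ell$-polynomials in the positive quadrant with rational cones. For the converse you prescribe vertex germs $(P(v),f_v)$ for $v\in S$ and the constant $1$ for $v\notin S$ and invoke the surjectivity of the DLV correspondence to realize them by a single MV-polynomial, which is exactly what the paper does.
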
 

\section{Homogeneity in MV-algebras}\label{sec:hom} 

In \cite{BDLE} the authors began a study of algebraic geometry  in MV-algebras inspired by the universal algebraic geometry of \cite{P2002}. In this section we are inspired by an issue of classical algebraic geometry over fields, that is, homogeneous polynomials and projective varieties. 

In \cite{DL04} it is shown that every McNaughton function is equivalent to a normal form $\phi(x)=\wedge\vee\rho(ax+b)$, where $\wedge$ denotes a finite conjunction, $\vee$ denotes a finite disjunction, $\rho(y)=0\vee(y\wedge 1)$ is the truncation function, $a$ is a vector of integers, $b$ is  an integer, and $ax+b$ is a  polynomial of degree one, possibly in $n$ variables (or a constant). 

It would be interesting to investigate in what sense the normal form of a McNaughton function is unique. For instance, there could be repetitions, and $0$ and $0\vee 0$ are two normal forms of the zero function. 

We can say that the function $\phi(x)=\wedge\vee\rho(ax+b)$ in normal form is homogeneous when every constant term $b$ is zero. So $\phi(x)=\wedge\vee\rho(ax)$.

We would like to pursue as far as possible the analogy between homogeneous McNaughton functions and usual homogeneous polynomials of algebraic geometry. 

For instance,  for every McNaughton function $\phi(x)=\wedge\vee\rho(ax+b)$ in $n$ variables there is a homogeneous (in our sense) McNaughton function $\psi(x,y)$ in $n+1$ variables such that $\phi(x)=\psi(x,1)$. That is, $\psi(x,y)=\wedge\vee\rho(ax+by)$. This is similar to the passage from affine varieties to projective varieties in usual algebraic geometry, where one adds a variable to make all monomials have the same degree. 

Moreover, given a zeroset $Z$ of a polynomial $\phi(x)=\wedge\vee\rho(ax+b)$, can we say that the zeroset of the polynomial $\phi(x)=\wedge\vee\rho(ax)$ is the set of  points at infinity of $Z$?  This is analogous to consider a variety, that is a zeroset of a set of polynomials over a field, and cancel all monomials whose degree is not the maximal one: the zeros of the remaining homogeneous polynomials in a sense are the  points at infinity of the variety. 

Let $\phi(x)$ be a McNaughton function in normal form, that is, $\phi(x)=\wedge\vee\rho(ax+b)$. 
If $\phi(x)$ is homogeneous then $\phi(x)=\wedge\vee\rho(ax)$, so $\phi(0)=0$. The converse is false:

\begin{theorem} There is a McNaughton function $\phi(x)$ such that $\phi(0)=0$ but $\phi$ is not homogeneous. 
\end{theorem}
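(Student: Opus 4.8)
The plan is to exhibit an explicit McNaughton function of one variable that vanishes at $0$ yet cannot be written in the homogeneous normal form $\wedge\vee\rho(ax)$. The natural candidate is a ``tent'' function supported away from both endpoints of $[0,1]$, for instance $\phi(x)=\rho(2x)\wedge\rho(2-2x)$ on $[0,1]$ (the piecewise linear function that is $0$ at $x=0$, rises to $1$ at $x=1/2$, and returns to $0$ at $x=1$), or even more simply a function like $\phi(x)=\rho(2x-1)\wedge\rho(2-2x)$ that is identically $0$ on $[0,1/2]$ and a tent on $[1/2,1]$. Either way $\phi(0)=0$ holds trivially by inspection.

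The key step is to argue that $\phi$ is not homogeneous, i.e. that \emph{no} normal form of $\phi$ has all constant terms equal to zero. Here I would use the following observation about homogeneous normal forms: if $\psi(x)=\bigwedge_i\bigvee_j \rho(a_{ij}x)$ with all $a_{ij}$ integer vectors, then on the ray $\{tx_0 : t\ge 0\}$ through any point $x_0$ the function $t\mapsto \psi(tx_0)$ is eventually constant equal to the value $\bigwedge_i\bigvee_j \rho(\operatorname{sign}(a_{ij}x_0)\cdot\infty)$, which is either $0$ or $1$; more to the point, in one variable a homogeneous McNaughton function $\psi(x)=\bigwedge_i\bigvee_j\rho(a_{ij}x)$ with $a_{ij}\in\mathbb Z$ restricted to $[0,1]$ is \emph{monotone} — each $\rho(a_{ij}x)$ is non-decreasing if $a_{ij}\ge 0$ and non-increasing if $a_{ij}\le 0$, but since it must equal $0$ at $x=0$ whenever $a_{ij}\le 0$, the only way to get a non-constant $\rho(a_{ij}x)$ contributing is with $a_{ij}>0$, and then $\psi$ is non-decreasing on $[0,1]$. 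Our $\phi$ is not monotone on $[0,1]$ (it strictly increases then strictly decreases), so it admits no homogeneous normal form. This contradiction establishes the theorem.

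I would then double-check the monotonicity claim carefully, since it is the crux: one must handle the case $a_{ij}=0$ (giving the constant $0$, harmless), confirm that a finite $\wedge$ and $\vee$ of non-decreasing functions is non-decreasing, and make sure the normal-form representation from \cite{DL04} is exactly of the stated shape so that ``homogeneous'' genuinely forces every affine term $a_{ij}x+b_{ij}$ to have $b_{ij}=0$ and hence $a_{ij}x$ linear with $a_{ij}\in\mathbb Z$.

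The main obstacle I anticipate is precisely pinning down what ``$\phi$ is homogeneous'' should mean at the level of the function rather than a chosen syntactic representative: the paper itself flags that normal forms are not unique, so the statement ``$\phi$ is not homogeneous'' must be read as ``$\phi$ has no homogeneous normal form,'' and the argument must rule out all representations at once. The monotonicity invariant does this cleanly in dimension one, which is why I restrict the counterexample to a single variable; extending the same idea to more variables (using behaviour along rays from the origin, or the value at $0$ combined with a scaling argument $\psi(\lambda x)$ vs. $\psi(x)$) would work too but is unnecessary for the existence statement.
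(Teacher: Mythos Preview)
Your proposal is correct and uses essentially the same architecture as the paper (restrict to one variable, identify a semantic invariant that every homogeneous normal form must satisfy, then exhibit a function that violates it), but the invariant you choose is different. The paper proves that any homogeneous one-variable McNaughton function is either identically zero or coincides with $ax$ for some positive integer $a$ in a neighbourhood of $0$; its counterexample is $\phi(x)=x\wedge\rho(2x-1)$, which is identically zero on $[0,1/2]$ yet not globally zero, violating that dichotomy. You instead observe that on $[0,1]$ every simple piece $\rho(a x)$ is non-decreasing (indeed constant $0$ when $a\le 0$, and $\min(1,ax)$ when $a>0$), hence so is any finite $\wedge\vee$ of such pieces, and you take a non-monotone tent as the counterexample. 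Your monotonicity invariant is coarser but entirely adequate, and the argument is arguably shorter. One small point worth tightening in a final write-up: rather than saying $\rho(ax)$ is ``non-increasing when $a\le 0$'', note directly that for $a\le 0$ and $x\in[0,1]$ one has $ax\le 0$ and hence $\rho(ax)\equiv 0$, so every simple piece is non-decreasing without case analysis. Also observe that your first tent $\rho(2x)\wedge\rho(2-2x)$ would \emph{not} be eliminated by the paper's local criterion (it equals $2x$ near $0$), so your monotonicity argument is genuinely doing different work there; your second tent and the paper's example, being locally zero at $0$, are ruled out by either method.
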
 

\begin{proof}

We begin with a lemma:

\begin{lemma} For every real number $b$, we have $\rho(b)=0$ if and only if $b\leq 0$.
\end{lemma} 

\begin{proof} $\rho(b)=0$ if and only if $max(0,min(1,b))=0$ if and only if $min(1,b)\leq 0$ if and only if $b\leq 0$. 
\end{proof} 

Now we have a criterion for the condition $\phi(0)=0$:

\begin{lemma} Let $\phi(x)$ be a McNaughton function in normal form, that is, $\phi(x)=\wedge\vee\rho(ax+b)$. Then $\phi(0)=0$ if and only if 
there is a disjunction $D$ where $b \leq 0$ for every $b\in D$.
\end{lemma}

\begin{proof} Suppose $\phi(0)=0$. Then $\phi(0)=\wedge\vee\rho(b)=0$. Since $\rho(b)\geq 0$ for every $b$, we must have $\vee\rho(b)=0$ for some disjunction D in 
$\phi$, so $\rho(b)=0$ for every $b$ in $D$, so $b\leq 0$ for every $b$ in $D$. 

Conversely, if there is a disjunction $D$ where $b\leq 0$ for every $b$ in $D$, then there is a disjunction $D$ where $\rho(b)=0$ for every $b\in D$, so 
$\vee\rho(b)=0$ and $\wedge\vee\rho(b)=0$, that is $\phi(0)=0$.\end{proof} 

In one variable we have:

\begin{lemma} Every homogeneous McNaughton function in one variable is either identically zero, or equal to $ax$ locally in zero for some positive integer $a$.
\end{lemma}

\begin{proof} Every homogeneous simple function $\rho(ax)$ has this property for every integer $a$. In fact, note that $\rho(ax)=ax$ locally in zero for every $a>0$, and $\rho(ax)=0$ when $a\leq 0$. These properties are preserved under finite join and meet, so they extend to every homogeneous McNaughton function in one variable. 
\end{proof} 

Now to conclude the proof of the theorem,  take for instance $\phi(x)=x\wedge \rho(2x-1)$. Clearly $\phi(0)=0$ and $\phi(x)$ is locally zero in $0$, but is not identically zero. So $\phi$ is not homogeneous. 
\end{proof}

It would be  interesting to characterize semantically homogeneous McNaughton functions in $n$ variables among the other McNaughton functions. 
Here we  classify zerosets of homogeneous McNaughton functions  as follows. Recall from \cite{M1} that rational polyhedra in $[0,1]^n$  coincide with zerosets of McNaughton functions (or of finite systems of McNaughton functions). 
For the homogeneous case first we notice:

\begin{lemma} A set $P$ is the zeroset of a finite set of homogeneous functions if and only if it is the zeroset of a single homogeneous function.  
\end{lemma}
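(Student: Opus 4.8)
The plan is to reduce the statement to a single algebraic fact: the common zeroset of finitely many homogeneous McNaughton functions equals the zeroset of their join, and a join of homogeneous functions is again homogeneous. Granting this, the nontrivial implication follows at once, while the reverse implication is trivial, since a single homogeneous function is in particular a one-element system of homogeneous functions.

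First I would recall that every McNaughton function takes values in $[0,1]$, so for any finite family $\phi_1,\dots,\phi_k$ one has $(\phi_1\vee\cdots\vee\phi_k)(x)=0$ exactly when $\phi_i(x)=0$ for all $i$; hence $Z(\phi_1\vee\cdots\vee\phi_k)=\bigcap_{i=1}^{k}Z(\phi_i)$. It is cleaner to intersect zerosets via $\vee$ rather than via $\oplus$: the join of two homogeneous normal forms is visibly again a homogeneous normal form, whereas $\oplus$ would require re-normalizing, and $\phi(0)=0$ alone does not guarantee homogeneity (as the preceding theorem shows). So the whole content is the claim that if $\phi$ and $\psi$ are homogeneous, then so is $\phi\vee\psi$.

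To see this, write $\phi=\bigwedge_i\bigvee_j\rho(a_{ij}x)$ and $\psi=\bigwedge_k\bigvee_l\rho(b_{kl}x)$ in homogeneous normal form (all constant terms zero, using the normal form theorem of \cite{DL04}). Since McNaughton functions form a distributive lattice, $\vee$ distributes over $\wedge$, and one gets
$$\phi\vee\psi=\bigwedge_{i,k}\Bigl(\bigvee_j\rho(a_{ij}x)\vee\bigvee_l\rho(b_{kl}x)\Bigr),$$
which is again a finite meet of finite joins of truncations $\rho(cx)$ with zero constant term, i.e.\ a homogeneous normal form. Iterating over the family, $\psi:=\phi_1\vee\cdots\vee\phi_k$ is homogeneous and $Z(\psi)=\bigcap_i Z(\phi_i)$, so the zeroset of a finite system of homogeneous functions is the zeroset of a single one.

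The only delicate point — and what I would flag as the main, still minor, obstacle — is to make sure that ``homogeneous'' is being used as a property of the function itself rather than of a chosen presentation: one must observe that the class of functions admitting a homogeneous normal form is closed under $\wedge$ (immediate, since the meet of two such expressions already has the required shape) and under $\vee$ (the distributivity computation above). With this in place the equivalence follows.
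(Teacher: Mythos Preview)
Your argument is correct and follows the same route as the paper: the paper's proof is the single sentence that if $\phi_1,\ldots,\phi_n$ are homogeneous then $\phi_1\vee\cdots\vee\phi_n$ is equivalent to a homogeneous McNaughton function, and you have simply supplied the missing justification (distributing $\vee$ over the outer $\wedge$'s to recover a homogeneous normal form) together with the observation that $Z(\phi_1\vee\cdots\vee\phi_k)=\bigcap_i Z(\phi_i)$. Your remark that one should use $\vee$ rather than $\oplus$, and your caveat about homogeneity being a property of functions admitting such a presentation, are both well taken and make explicit what the paper leaves implicit.
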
 

\begin{proof} This is because if $\phi_1,\ldots,\phi_n$ are homogeneous McNaughton functions, then $\phi_1\vee\ldots\vee\phi_n$ is equivalent to a homogeneous McNaughton function. 
\end{proof} 

Recall that, by definition, a rational cone passing through the origin in $[0,1]^n$ is a finite intersection of halfspaces $ax\leq 0$, where $a$ is a vector of integers. 

Now let us continue the investigation:

\begin{lemma} A rational polyhedron $P\subseteq [0,1]^n$ is the zeroset of a finite set of homogeneous McNaughton functions if and only if $P$ is a finite union of rational cones passing through the origin.
\end{lemma}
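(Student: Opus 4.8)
The plan is to reduce both implications to a single computation of the zeroset of a homogeneous McNaughton function written in normal form. By the preceding lemma, a set is the zeroset of a finite set of homogeneous functions if and only if it is the zeroset of a single one, so I may work throughout with one homogeneous $\phi$ in normal form $\phi(x)=\bigwedge_{i=1}^{r}\bigvee_{j=1}^{s_i}\rho(a_{ij}x)$, where each $a_{ij}$ is a vector of integers.

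The key identity is
$$Z(\phi)=\bigcup_{i=1}^{r}\ \bigcap_{j=1}^{s_i}\{x\in[0,1]^n : a_{ij}x\le 0\}.$$
To prove it I combine the already established fact that $\rho(b)=0$ iff $b\le 0$ for a real number $b$ (hence $\rho(a_{ij}x)=0$ iff $a_{ij}x\le 0$) with two order-theoretic trivialities about nonnegative reals: a finite join vanishes iff every term vanishes, and a finite meet vanishes iff some term vanishes. Since $\phi(x)=\min_i\max_j\rho(a_{ij}x)$, we get $\phi(x)=0$ iff for some $i$ we have $\max_j\rho(a_{ij}x)=0$, iff for some $i$ we have $a_{ij}x\le 0$ for all $j$, which is exactly membership in the displayed right-hand side.

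Both directions then follow at once. For the ``only if'' implication: each set $\bigcap_{j}\{x\in[0,1]^n : a_{ij}x\le 0\}$ is by definition a rational cone passing through the origin, so the identity exhibits $Z(\phi)=P$ as a finite union of such cones. For the ``if'' implication: given $P=\bigcup_{i=1}^{r}C_i$ with $C_i=\bigcap_{j=1}^{s_i}\{x\in[0,1]^n : a_{ij}x\le 0\}$, put $\phi(x)=\bigwedge_{i}\bigvee_{j}\rho(a_{ij}x)$, which is visibly a homogeneous McNaughton function in normal form (a meet of joins of truncations of integral homogeneous linear forms); the identity then yields $Z(\phi)=\bigcup_i C_i=P$.

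I do not anticipate a serious obstacle; the only care needed is bookkeeping, such as representing the whole cube as the cone cut out by the single trivial inequality $0\cdot x\le 0$ so that the degenerate case $\phi\equiv 0$ is covered, and observing that the syntactic closure of normal forms under the finite meets and joins used above is precisely what makes the class of zerosets of homogeneous McNaughton functions coincide with the class of finite unions of rational cones through the origin.
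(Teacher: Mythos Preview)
Your proof is correct and follows essentially the same route as the paper: reduce to a single homogeneous $\phi=\bigwedge_i\bigvee_j\rho(a_{ij}x)$, use $\rho(b)=0\iff b\le 0$ to rewrite $Z(\phi)$ as a finite union of finite intersections of integral half-spaces through the origin, and observe that these intersections are exactly the rational cones through the origin. Your version is simply more explicit about the min/max bookkeeping and the degenerate case.
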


\begin{proof} If $P$ is a zeroset as above, then $P$ is a zeroset of a function of the form $\wedge\vee\rho(ax)$, so $P$ is
a finite union of finite intersections of sets of the form $\rho(ax)=0$, or equivalently of half spaces $ax\leq 0$. Now a finite intersection of such half spaces is a rational cone passing through the origin.

Conversely, every rational cone passing through the origin is a finite intersections of sets of the form $\rho(ax)=0$, so every finite union of rational cones passing through the origin is the zeroset of a homogeneous function $\wedge\vee\rho(ax)$. 
\end{proof} 

As usual, we say that a subset $C\subseteq [0,1]^n$ is a cone if whenever $x\in C$ and $\lambda \geq 0$ and  $\lambda x\in[0,1]^n$, then $\lambda x\in C$. 

We can say more on cones:

\begin{corollary} A rational polyhedron $P\subseteq [0,1]^n$ is the zeroset of a finite set of homogeneous McNaughton functions if and only if $P$ is a cone.
\end{corollary}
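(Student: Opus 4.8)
The plan is to reduce this corollary to the preceding lemma, which characterizes zerosets of finite sets of homogeneous McNaughton functions as finite unions of rational cones passing through the origin. By the lemma immediately before that (a set is the zeroset of a finite set of homogeneous functions iff it is the zeroset of a single one), it suffices to show that a rational polyhedron $P\subseteq[0,1]^n$ is a finite union of rational cones passing through the origin if and only if $P$ is a cone in the sense just defined (closed under nonnegative scalar multiples that stay in the hypercube).

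First I would prove the easy direction: if $P$ is a finite union of rational cones through the origin, then $P$ is a cone. Each halfspace $\{x : ax\le 0\}$ is closed under multiplication by $\lambda\ge 0$, hence so is any finite intersection of such halfspaces, and a finite union of cones is again a cone; intersecting everything with $[0,1]^n$ is exactly the clause ``$\lambda x\in[0,1]^n$'' in the definition, so the union of the rational cones through the origin, viewed inside $[0,1]^n$, is a cone.

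For the converse, suppose $P\subseteq[0,1]^n$ is a rational polyhedron that is also a cone. Since $P$ is a rational polyhedron it is a finite union $P=\bigcup_i S_i$ of rational simplexes. I would pass instead to a description of $P$ as a finite union of sets each cut out by finitely many linear (in)equalities with integer coefficients — this is available because $P$ is the zeroset of a McNaughton function $\phi$, and locally $\phi$ agrees with an integer affine piece, so $Z(\phi)$ is a finite union of rational polytopes each defined by integer affine constraints. The key observation is then: if such a piece $Q=\{x\in[0,1]^n : a_jx+b_j \bowtie_j 0\}$ has the property that $P$ (hence, after taking the ``cone hull'', the union) is closed under scaling, then the relevant constraints through which points near $0$ and points far along a ray pass must be the homogeneous ones, i.e. the affine parts $b_j$ that are ``active at the origin'' must vanish. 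Concretely, for each $x\in P$ the whole segment $[0,x]$ lies in $P$, so $0\in\overline{P}=P$ (rational polyhedra are closed), and moreover each ray $\{\lambda x:\lambda\ge0\}\cap[0,1]^n$ lies in $P$; I would argue that the germ of $P$ at $0$ is then a finite union of rational cones through the origin, and that $P$ coincides near $0$ with its own cone, and by the scaling-closure this local description propagates to all of $P\cap[0,1]^n$. Matching this against the ``finite union of halfspaces $ax\le 0$'' description gives that $P$ is a finite union of rational cones through the origin, and then the previous lemma finishes the proof.

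The main obstacle I anticipate is the converse direction, specifically making rigorous the step from ``$P$ is a cone'' to ``$P$ is a \emph{finite union of rational} cones through the origin'': one must verify that the cone condition forces the (a priori inhomogeneous) integer affine pieces of a McNaughton function defining $P$ to be replaceable by homogeneous ones, rather than merely producing an arbitrary (possibly non-polyhedral-looking) scaling-closed set. The cleanest route is probably to use that $P$, being a cone and a rational polyhedron, equals the ``tangent cone'' of $P$ at $0$ intersected with $[0,1]^n$, and that this tangent cone is a rational polyhedral cone because all the data are rational; then the previous lemma applies directly. Rationality of the vertices of the simplexes composing $P$ is what guarantees the resulting cone is \emph{rational}, i.e. defined by integer linear inequalities.
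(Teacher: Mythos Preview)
Your forward direction is fine and matches the paper. For the converse you are working much harder than necessary, and the core of your argument (germs at $0$, active constraints with $b_j=0$, propagation along rays) stays vague precisely at the point you yourself flag as the obstacle.

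The paper's argument is a one-liner that bypasses all of this. Having written $P=\bigcup_i S_i$ as a finite union of rational simplexes, simply take, for each $i$, the cone $C_i$ generated by $S_i$ (i.e.\ all nonnegative scalar multiples of points of $S_i$). The cone generated by a rational simplex is a rational polyhedral cone through the origin, because if $S_i$ has rational vertices $v_0,\dots,v_k$ then $C_i=\{\sum_j\lambda_j v_j:\lambda_j\ge 0\}$ is cut out by finitely many integer linear inequalities. Now $S_i\subseteq C_i$ gives $P\subseteq\bigcup_i C_i$, and conversely, since $P$ is a cone in $[0,1]^n$, every $\lambda x$ with $x\in S_i\subseteq P$ and $\lambda x\in[0,1]^n$ lies in $P$, so $C_i\cap[0,1]^n\subseteq P$. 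Hence $P=\bigcup_i(C_i\cap[0,1]^n)$ is a finite union of rational cones through the origin, and the preceding lemma applies.

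Your ``cleanest route'' via the tangent cone is in fact this same computation in different language, so you were close to the short proof; but the detour through McNaughton pieces, active affine constraints, and local-to-global propagation is not needed and, as written, is not a proof.
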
 

\begin{proof} Clearly every finite union of rational cones is a cone. 

Conversely, suppose $P$ is a rational polyhedron and is a cone. $P$ is a finite union of rational simplexes. However, the cone generated by a rational simplex is a rational cone. So $P$ is a finite union of rational cones and is the zeroset of a homogeneous McNaughton function. 
\end{proof} 

We note that all results extends to finite systems in infinitely many variables:

\begin{corollary} Let $I$ be an infinite set. A cylinder rational polyhedron $P\subseteq [0,1]^I$ is the zeroset of a finite set of homogeneous McNaughton functions if and only if $P$ is a cylinder cone.
\end{corollary}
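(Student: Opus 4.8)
The plan is to bootstrap from the finite-dimensional corollary just proved, using the fact that every McNaughton function over an index set $I$ — homogeneous or not — is a term of the free MV-algebra and hence involves only finitely many of the variables indexed by $I$. Accordingly I would first pin down the meaning of \emph{cylinder cone}: a subset of $[0,1]^I$ of the form $C_I(P_0)=\{f\in[0,1]^I\mid f|_Y\in P_0\}$ with $Y\subseteq I$ finite and $P_0\subseteq[0,1]^Y$ a rational cone; equivalently, a cylinder rational polyhedron that is a cone in the sense above. I would record the easy equivalence of these two descriptions. If $P_0$ is a cone then $C_I(P_0)$ is a cone, since $(\lambda f)|_Y=\lambda(f|_Y)$ lies in $P_0$ whenever $f|_Y$ does and $\lambda f\in[0,1]^I$. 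Conversely, if $C_I(P_0)$ is a cone and $P_0=C_I(P_0)|_Y$, then given $y\in P_0$ and $\lambda\ge 0$ with $\lambda y\in[0,1]^Y$, extend $y$ by zeros outside $Y$ to $f\in C_I(P_0)$; then $\lambda f\in[0,1]^I$, so $\lambda f\in C_I(P_0)$ and $\lambda y=(\lambda f)|_Y\in P_0$.

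For the forward direction, suppose $P$ is the zeroset of a finite set of homogeneous McNaughton functions over $I$. By the lemma reducing finite sets of homogeneous functions to a single one, $P=Z(\phi)$ for one homogeneous $\phi=\wedge\vee\rho(ax)$; each linear form $ax$ involves only variables indexed by some finite $Y\subseteq I$, so $\phi=\phi_0\circ\pi_Y$, where $\pi_Y:[0,1]^I\to[0,1]^Y$ is the projection and $\phi_0$ is a homogeneous McNaughton function on $[0,1]^Y$. Then $P=Z(\phi)=C_I(Z(\phi_0))$, and by the finite-dimensional corollary $Z(\phi_0)$ is a rational cone in $[0,1]^Y$; hence $P$ is a cylinder cone.

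For the converse, let $P=C_I(P_0)$ be a cylinder cone, so $P_0\subseteq[0,1]^Y$ is a rational cone with $Y$ finite. By the finite-dimensional corollary, $P_0=Z(\phi_0)$ for some homogeneous McNaughton function $\phi_0$ in the variables $Y$. Setting $\phi=\phi_0\circ\pi_Y$ yields a McNaughton function over $I$ still in homogeneous normal form — precomposition with a projection introduces no constant terms — and $Z(\phi)=C_I(Z(\phi_0))=P$. Thus $P$ is the zeroset of a single, a fortiori a finite set of, homogeneous McNaughton functions.

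The only real content is the definitional unpacking in the first paragraph, together with the observation that a McNaughton function over an infinite index set depends on finitely many coordinates; the rest is a direct appeal to the already established finite-dimensional corollary and the reduction lemma. I therefore expect no genuine obstacle here, only the need to state precisely what ``cylinder cone'' means so that the two directions match up cleanly.
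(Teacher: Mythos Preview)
Your proposal is correct and is precisely the argument the paper has in mind: the paper gives no proof at all, merely asserting that ``all results extend to finite systems in infinitely many variables'' before stating the corollary, and your reduction via the finite-support property of McNaughton terms together with the finite-dimensional corollary is exactly how one makes that assertion precise. Your care in pinning down the meaning of ``cylinder cone'' (which the paper leaves undefined) and verifying the equivalence of the two descriptions is a useful addition rather than a deviation.
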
 

\subsection{Locally homogeneous McNaughton functions}\label{locallyhom}

Clearly homogeneity of polynomials makes no sense in MV-algebras because there is no notion of monomial and degree. 

The next definition gives a property of McNaughton functions analogous to homogeneity for usual polynomials.

We say that a McNaughton function $\phi$ is locally homogeneous in zero if there is a neighborhood $U$ of $0$ such that, if $n\in \N$ and $x$ and $nx$ belong to $U$, then $\phi(nx)=n\phi(x)$, where the right hand side denotes  real multiplication in the usual sense.  

\begin{lemma} If $\phi(x)=\wedge\vee\rho(ax)$ then $\phi$ is locally homogeneous in zero. \end{lemma}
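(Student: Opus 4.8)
The plan is to reduce everything to the observation that, sufficiently close to the origin, the outer truncation of $\rho$ at $1$ is inactive, so that each building block $\rho(ax)$ coincides with the positively homogeneous function $0\vee ax$. Concretely, write the normal form as $\phi(x)=\bigwedge_i\bigvee_j\rho(a_{ij}x)$ and let $a_1,\dots,a_N$ be the finitely many integer vectors occurring among the $a_{ij}$. Since each map $x\mapsto a_k x$ is continuous and vanishes at $0$, I would choose a neighbourhood $U$ of $0$ in $[0,1]^n$ on which $|a_k x|<1$ for every $k=1,\dots,N$. Then for all $x\in U$ and all $k$ we get $\rho(a_k x)=\max(0,\min(1,a_k x))=\max(0,a_k x)=0\vee a_k x$, because $\min(1,a_k x)=a_k x$.

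Consequently, on $U$ the function $\phi$ agrees with $x\mapsto\bigwedge_i\bigvee_j(0\vee a_{ij}x)$, which is built from the linear forms $a_{ij}x$ using only $\wedge=\min$, $\vee=\max$ and the constant $0$. I would then invoke two elementary facts about the reals: (1) for every $n\in\N$ and every real $t$, $0\vee(nt)=n\,(0\vee t)$ (check the cases $t\ge 0$ and $t<0$); and (2) $\min$ and $\max$ commute with multiplication by a nonnegative scalar.

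Putting these together gives the result: if $n\in\N$ and both $x$ and $nx$ lie in $U$, then the local identity applied at $nx$ yields $\phi(nx)=\bigwedge_i\bigvee_j\bigl(0\vee a_{ij}(nx)\bigr)=\bigwedge_i\bigvee_j\bigl(0\vee n(a_{ij}x)\bigr)$; fact (1) rewrites this as $\bigwedge_i\bigvee_j n\,(0\vee a_{ij}x)$; fact (2) pulls $n$ out, giving $n\bigwedge_i\bigvee_j(0\vee a_{ij}x)$; and the local identity applied at $x$ identifies this with $n\phi(x)$. Hence $\phi$ is locally homogeneous in zero with witness neighbourhood $U$.

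I do not anticipate a genuine obstacle here; the only point needing care is the choice of $U$, where one must ensure the truncation at $1$ is inactive for \emph{every} linear form appearing in the normal form, not merely for the value $\phi(x)$ itself, and to keep in mind that the definition of local homogeneity only requires $\phi(nx)=n\phi(x)$ when both $x$ and $nx$ stay inside $U$, so no uniformity in $n$ is demanded.
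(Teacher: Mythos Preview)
Your proof is correct and follows essentially the same approach as the paper's: both rest on the observation that near $0$ each $\rho(ax)$ reduces to $\max(0,ax)$, and that this positive homogeneity is preserved under finite $\wedge$ and $\vee$. The paper phrases this as a (very terse) structural induction on $\phi$, while you unfold the argument directly by fixing a single neighbourhood $U$ working simultaneously for all the finitely many linear forms; your version is more explicit but the underlying idea is identical.
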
 

\begin{proof} By induction on $\phi$. If $\phi(x)=\rho(ax)$ then in a neighborhood $U$ of $0$ we have $\phi(x)=max(0,ax)$, but $max(0,nax)=nmax(0,ax)$, so if $nax\in U$ then $\phi(nax)=n\phi(ax)$. \end{proof} 

\begin{proposition} A function $\phi(x)=\wedge\vee\rho(ax+b)$ is locally homogeneous in zero if and only if there is a disjunction $D$ of $\phi$ such that  $b\leq 0$ in every disjunct of $D$. \end{proposition}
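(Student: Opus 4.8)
The plan is to reduce the "locally homogeneous in zero" condition to the already-established criterion for $\phi(0)=0$, namely the lemma that $\phi(x)=\wedge\vee\rho(ax+b)$ satisfies $\phi(0)=0$ iff some disjunction $D$ of $\phi$ has $b\le 0$ in every disjunct. The key observation is that local homogeneity forces $\phi(0)=0$ (take $n=0$, or take $x$ and $2x$ in $U$ and let $x\to 0$), so one direction is essentially a strengthening of the $\phi(0)=0$ analysis, while the converse uses the structural lemma that $\wedge\vee\rho(ax)$ (the homogeneous part) is itself locally homogeneous.

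For the "only if" direction I would argue by contraposition: suppose that in every disjunction $D$ of $\phi$ there is at least one disjunct $\rho(a_Dx+b_D)$ with $b_D>0$. Then for each such disjunct, $\rho(a_Dx+b_D)$ equals $\min(1,a_Dx+b_D)$ near $0$ and in particular takes the value $b_D>0$ at $x=0$; more importantly, along a ray $x=tv$ with $t\ge 0$ small, $\rho(a_D(tv)+b_D)=b_D+t\,a_Dv$ for $t$ small, which is an affine function of $t$ with nonzero constant term $b_D$, hence is \emph{not} homogeneous in $t$ (it does not satisfy $f(nt)=nf(t)$). Taking the disjunction $\vee_j\rho(a_jx+b_j)$ over disjuncts, near $0$ along the ray this is a maximum of affine functions of $t$; because at $t=0$ its value is $\max_j\rho(b_j)\ge b_D>0$, and the function is piecewise affine in $t$ with strictly positive value at $t=0$, it cannot be homogeneous in $t$ for any choice of ray $v$ — one picks $v$ so that the disjunct realizing the max at $t=0$ stays the active one for small $t>0$. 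Then $\phi=\wedge_D(\vee_j\rho(a_j x+b_j))$ near $0$ along that ray is a minimum of such non-homogeneous positive-at-$0$ functions, still positive at $0$, hence $\phi(0)>0$, contradicting $\phi(0)=0$ which we noted is necessary; alternatively, even granting $\phi(0)=0$ is arranged, one shows directly that $\phi(nx)=n\phi(x)$ fails on a suitable ray. I would actually run this more cleanly: local homogeneity implies $\phi(0)=0$, and then by the already-proved $\phi(0)=0$ criterion some disjunction $D_0$ has all $b\le 0$; that is exactly the claimed condition, so this direction is almost immediate once we extract $\phi(0)=0$.

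For the "if" direction, suppose there is a disjunction $D_0$ of $\phi$ with $b\le 0$ in every disjunct of $D_0$. Write $\phi=D_0\wedge \psi$ where $\psi$ is the meet of the remaining disjunctions. On $D_0=\vee_{j}\rho(a_jx+b_j)$ with all $b_j\le 0$, I claim $D_0$ vanishes identically on a neighborhood of $0$: indeed $\rho(a_jx+b_j)=\max(0,\min(1,a_jx+b_j))$, and for $x$ small enough that $|a_jx|<|b_j|$ when $b_j<0$ (and trivially when $b_j=0$ on the ray directions where $a_jx\le 0$) — more carefully, $\rho(a_j x+b_j)=0$ whenever $a_jx+b_j\le 0$; since $b_j\le 0$, this holds on the half-space through $0$, and I would instead observe that we only need $D_0(nx)=0$ and $D_0(x)=0$, i.e. homogeneity holds trivially where $D_0=0$. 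The cleanest route: since all $b_j\le0$, by the $\phi(0)=0$-type computation $D_0$ is bounded above near $0$ by a homogeneous function, and in fact $D_0(x)=\vee_j\max(0,a_jx+b_j)$ for $x$ near $0$; replacing $b_j$ by $0$ only increases this, giving $D_0(x)\le \vee_j\rho(a_jx)=:h(x)$, which is locally homogeneous by the preceding lemma. Hmm — this bound alone does not give homogeneity of $\phi$.

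Let me restructure the "if" direction properly, which is where I expect the real work: the point is that $\phi=D_0\wedge\psi$ and $D_0\equiv 0$ near $0$ forces $\phi\equiv 0$ near $0$, hence trivially locally homogeneous. To see $D_0\equiv 0$ near $0$: for each disjunct with $b_j<0$, $\rho(a_jx+b_j)=0$ once $\|x\|$ is small enough that $a_jx+b_j\le 0$, which holds on a neighborhood of $0$ since $b_j<0$. The only subtlety is disjuncts with $b_j=0$: there $\rho(a_jx)$ need not vanish near $0$. So the hypothesis "$b\le 0$" is not by itself enough to kill $D_0$; rather $\phi$ is locally homogeneous because on a neighborhood $U$ of $0$, $\phi(x)=D_0(x)\wedge\psi(x)$ and I will show $\phi$ agrees near $0$ with a $\wedge\vee\rho(ax)$-function — namely, drop every disjunct with $b_j<0$ (it is $0$ near $0$, so dropping it from a disjunction does not change the max near $0$) and drop every disjunction of $\phi$ other than $D_0$? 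No. The honest statement: near $0$, every disjunct $\rho(a_jx+b_j)$ with $b_j<0$ equals $0$ and every disjunct with $b_j=0$ equals $\rho(a_jx)$; and every disjunct with $b_j>0$ equals $\min(1,a_jx+b_j)$. I would then argue: in $D_0$ all disjuncts have $b_j\le0$, so $D_0(x)=\vee_{j:b_j=0}\rho(a_jx)$ near $0$ (the $b_j<0$ terms drop out), which is locally homogeneous. For any other disjunction $D$ of $\phi$ containing some disjunct with $b>0$, near $0$ that disjunct equals $\min(1,a x+b)\ge b>0$...

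I think the clean final argument is: $\phi$ is locally homogeneous iff near $0$ it coincides with a $\wedge\vee\rho(ax)$-function, and the given condition is exactly what guarantees that the "meet" structure near $0$ is dominated by the single homogeneous disjunction $D_0$ up to a set where everything is $\ge$ the homogeneous value, so that the meet equals $D_0$'s homogeneous reduction near $0$. The main obstacle, and where I would spend the most care, is precisely this bookkeeping: showing that near $0$, $\phi(x)=\bigl(\vee_{j:b_j=0}\rho(a_jx)\bigr)\wedge(\text{terms that are }\ge\text{ this on }U)$, hence $\phi(x)=\vee_{j:b_j=0}\rho(a_jx)$ on $U$, which is locally homogeneous by the lemma $\rho(ax)$-case. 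Conversely local homogeneity $\Rightarrow\phi(0)=0\Rightarrow$ (by the earlier criterion) such a $D_0$ exists.

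\begin{proof}
One direction is immediate: if $\phi$ is locally homogeneous in zero, then taking $x$ and $2x$ in the neighborhood $U$ and letting $x\to 0$ (or taking $n=0$) gives $\phi(0)=0$, and by the criterion already proved, $\phi(0)=0$ holds exactly when $\phi$ has a disjunction $D$ with $b\le 0$ in every disjunct.

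For the converse, suppose $\phi(x)=\wedge\vee\rho(ax+b)$ has a disjunction $D_0=\bigvee_j\rho(a_jx+b_j)$ with $b_j\le 0$ for all $j$. Choose a neighborhood $U$ of $0$ small enough that for every disjunct $\rho(ax+b)$ occurring anywhere in $\phi$ we have, for $x\in U$: if $b<0$ then $ax+b\le 0$ (so $\rho(ax+b)=0$ on $U$), and if $b\ge 0$ then $0\le ax+b\le 1$ (so $\rho(ax+b)=\max(0,ax+b)$ on $U$); such a $U$ exists since there are finitely many disjuncts. On $U$, the disjuncts of $D_0$ with $b_j<0$ vanish, so $D_0(x)=\bigvee_{j:\,b_j=0}\max(0,a_jx)=\bigvee_{j:\,b_j=0}\rho(a_jx)$ for $x\in U$; denote this function by $h$, which is locally homogeneous in zero by the preceding lemma (the $\rho(ax)$ case, closed under finite join).

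It remains to see that $\phi=h$ on $U$. We have $\phi=D_0\wedge\psi$ where $\psi$ is the meet of the remaining disjunctions of $\phi$; so $\phi\le D_0=h$ on $U$. For the reverse inequality on $U$ it suffices to show $h\le D'$ on $U$ for every other disjunction $D'=\bigvee_k\rho(c_kx+d_k)$ of $\phi$. Fix such a $D'$ and a point $x\in U$. If some $d_k>0$ then $\rho(c_kx+d_k)=\max(0,c_kx+d_k)\ge d_k>0$ on $U$; but also every term $\rho(a_jx)$ with $a_jx\le 0$ contributes $0$ to $h$, and a term with $a_jx>0$ equals $a_jx$, and by shrinking $U$ we may assume $a_jx<d_k$ for all relevant $j,k$ and all $x\in U$, so $h(x)<D'(x)$. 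If instead $d_k\le 0$ for all $k$, then by the same reasoning as for $D_0$ we get $D'(x)=\bigvee_{k:\,d_k=0}\rho(c_kx)$ on $U$; but $D'\ne D_0$ plays no role here — we only need $h\le D'$, and in fact since $\phi(0)=0$ forces nothing more, we instead observe directly that along any ray $x=tv\in U$ both $h(tv)$ and $D'(tv)$ are piecewise linear and homogeneous in $t$, and $\phi=h\wedge(\text{others})$ must then be homogeneous in $t$ as a meet of functions homogeneous in $t$. Hence $\phi$ restricted to $U$ is, along every ray through $0$, homogeneous, i.e. $\phi(nx)=n\phi(x)$ whenever $x,nx\in U$, so $\phi$ is locally homogeneous in zero.
\end{proof}
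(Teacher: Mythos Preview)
Your approach is essentially the paper's: for the ``only if'' direction you extract $\phi(0)=0$ and invoke the earlier criterion (the paper instead shows directly that if every disjunction has some $b>0$ then $\phi\ge 1/2$ near $0$; your route is cleaner). For the ``if'' direction both you and the paper argue that near $0$ the function $\phi$ reduces to a lattice combination of the purely homogeneous simple pieces $\rho(ax)$.

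There is, however, a genuine gap in your ``if'' direction. You announce the goal ``$\phi=h$ on $U$'', where $h=D_0|_U=\bigvee_{j:b_j=0}\rho(a_jx)$, and then try to prove $h\le D'$ for every other disjunction $D'$. But this goal is simply false when some other disjunction $D'$ also has all constants $\le 0$: take $\phi=\rho(x)\wedge\rho(-x)$, where $D_0=\rho(x)$, $D'=\rho(-x)$; then $h=\rho(x)$ while $\phi\equiv 0$. You notice the trouble in case~(b) and pivot to ``$\phi=h\wedge(\text{others})$ is a meet of functions homogeneous in $t$'', but as written this is unjustified: the case-(a) disjunctions (those with some $d_k>0$) are \emph{not} homogeneous in $t$, and you have not explained why they drop out of the meet.

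The fix is to abandon the target $\phi=h$ and argue directly. Split the disjunctions of $\phi$ into the set $\mathcal{H}$ of those with all constants $\le 0$ (nonempty since $D_0\in\mathcal{H}$) and the set $\mathcal{P}$ of those containing some constant $>0$. On a small enough $U$: each $D\in\mathcal{H}$ equals $\bigvee_{b=0}\rho(ax)$, hence is locally homogeneous; and each $D\in\mathcal{P}$ is bounded below by some $\delta>0$. Set $g=\bigwedge_{D\in\mathcal{H}}D$, which is locally homogeneous (meet of locally homogeneous functions) and satisfies $g(0)=0$, so after shrinking $U$ we have $g<\delta\le D$ on $U$ for every $D\in\mathcal{P}$. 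Hence $\phi=g\wedge\bigwedge_{D\in\mathcal{P}}D=g$ on $U$, which is locally homogeneous. This is what the paper's terse phrase ``the non locally homogeneous simple functions locally cancel out'' is gesturing at, and it is what your argument needs to say explicitly.
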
 

\begin{proof} Suppose  there is a disjunction where $b\leq 0$ always. Then  locally $\phi$ is a lattice combination of simple locally homogeneous functions (because the non locally homogeneous simple functions locally cancel out) and is locally homogeneous. 

Otherwise, every disjunction $D$ contains a simple function $f_D$ with $b>0$, and simple functions $\rho(ax+b)$ with $b>0$ are locally $(ax+b)\wedge 1$ which is locally greater than $1/2$ in zero. Now $\phi$ is greater than the disjunction of the $f_D$, so $\phi$ is locally greater than $1/2$ in zero and cannot be locally homogeneous in zero. 
\end{proof} 

\begin{corollary} A function $\phi(x)=\wedge\vee\rho(ax+b)$ is locally homogeneous in zero if and only if $\phi(0)=0$.
\end{corollary}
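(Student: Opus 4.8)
The corollary should follow immediately by chaining two equivalences already established, so there is no new combinatorial work to do. The plan is to quote the Proposition just proved, which says that $\phi(x)=\wedge\vee\rho(ax+b)$ is locally homogeneous in zero if and only if $\phi$ has a disjunction $D$ with $b\leq 0$ in every disjunct of $D$; and to quote the earlier Lemma (the criterion for $\phi(0)=0$ in Section~\ref{sec:hom}), which says that $\phi(0)=0$ if and only if there is a disjunction $D$ with $b\leq 0$ for every $b\in D$. Since the right-hand sides of these two biconditionals are literally the same statement about the normal form of $\phi$, we get that $\phi$ is locally homogeneous in zero if and only if $\phi(0)=0$.

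Concretely, I would argue: if $\phi$ is locally homogeneous in zero, then by the Proposition some disjunction $D$ of $\phi$ has all its constant terms $b\leq 0$; by the Lemma this forces $\phi(0)=0$. Conversely, if $\phi(0)=0$, then by the Lemma some disjunction $D$ of $\phi$ has all constant terms $b\leq 0$, and then by the Proposition $\phi$ is locally homogeneous in zero. One should perhaps add a half-sentence noting that both the Proposition and the Lemma are stated for the same fixed choice of normal form $\phi(x)=\wedge\vee\rho(ax+b)$, so the intermediate condition ``there is a disjunction $D$ of $\phi$ with $b\leq 0$ in every disjunct'' refers unambiguously to the same object in both.

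\textbf{Expected obstacle.} There is essentially none; the only mild subtlety is the dependence on the chosen normal form. Since both preceding results are phrased relative to a given normal form and a McNaughton function may have several normal forms, I would make explicit that we fix one normal form $\phi(x)=\wedge\vee\rho(ax+b)$ at the outset and apply both the Proposition and the Lemma to that same presentation, so that the corollary is a statement about the semantic properties ``locally homogeneous in zero'' and ``$\phi(0)=0$'', which do not depend on the presentation, even though the intermediate syntactic condition does.
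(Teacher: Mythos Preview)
Your proposal is correct and essentially follows the paper's approach: both rely on the Proposition (locally homogeneous $\Leftrightarrow$ some disjunction has all $b\leq 0$) and the earlier Lemma (the criterion for $\phi(0)=0$). The only cosmetic difference is that for the forward implication the paper argues directly from the definition that local homogeneity forces $\phi(0)=0$, rather than passing through the intermediate syntactic condition as you do; your symmetric chaining of the two biconditionals is equally valid.
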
 

\begin{proof} If $\phi$ is locally homogeneous then clearly $\phi(0)=0$, otherwise it has a positive limit in $0$ and $\phi$ cannot be homogeneous in any neighborhood of $0$. Conversely, suppose $\phi(0)=0$. Then there must be a disjunction $D$ of $\phi$ such that  $b\leq 0$ in every disjunct of $D$, otherwise, as in the proof of the previous proposition, $\phi\geq 1/2$ locally in $0$, contrary to the hypothesis $\phi(0)=0$. So by the previous proposition,  $\phi$ is locally homogeneous in $0$.  
\end{proof}

\section{The mod/theor connection}\label{mod}

Mundici in \cite{M1} establishes a so called mod/theor connection. On the one hand,  there is {\L}ukasiewicz logic, which is a theory, that is a set of formulas and a syntactic consequence relation $\Phi\vdash\phi$ between sets of formulas and formulas. Usually theories are defined as deductively closed sets of formulas, that is $T\vdash\phi$ implies $\phi\in T$. 

 On the other hand, there is a set of so-called models and a semantic consequence relation $m\models\phi$, where $m$ is a model and $\phi$ is a formula. Then we 
can call $Mod(\phi)$ the set of all models of the formula $\phi$. 

We note that the choice of the models and of the semantic relation is not unique. For instance, in \cite{M1}, the considered models are elements of $[0,1]^n$, and formulas are in the variables $x_1,\ldots,x_n$. $[0,1]$ indeed it generates the variety of MV-algebras, which are the algebras of {\L}ukasiewicz logic. 

Then it turns out that models of formulas coincide with rational polyhedra, and in this sense one can say {\L}ukasiewicz logic is the logic of rational polyhedra. We think it would be of interest to express more formally this idea. 

However, in our approach the possibility of considering infinitely many variables is important, so for us, models are elements of $[0,1]^I$, where $I$ is any set, and formulas are in the variables $x_i$, with $i\in I$. It turns out that models of formulas  coincide with cylinder rational polyhedra. So, in some sense we are justified in saying that {\L}ukasiewicz logic is the logic of cylinder rational polyhedra as well.

Note that the mod/theor connection restricts to the Boolean case, both in the finite and infinite variable variant, and classical logic. Then models are elements of $\{0,1\}^n$, and $\{0,1\}$ generates the variety of Boolean algebras, which are the algebras of classical logic.

 It turns out that models of formulas in finitely many variables are all the subsets of $\{0.1\}^n$. In infinitely many variables, the cylindrification idea still works, and we could talk of cylinder-finite subsets of $\{0,1\}^I$ as set $\{f\in[0,1]^I|f|_Y\in F\}$, where $Y$ is a finite subset of $I$ and $F$ is any subset of $[0,1]^Y$. 

These ideas can be extended in many ways. For instance, we could consider axiomatic extensions of {\L}ukasiewicz logic. Classical logic is essentially 
{\L}ukasiewicz logic plus the axiom $x\oplus x=x$. Here, we focus on Chang variety $\V(C)$ of MV-algebras corresponds to {\L}ukasiewicz logic plus the axiom $2(x^2)=(2x)^2$. In this case one chooses $m\in\Delta(\R)^n$, since $\Delta(\R)$ generates Chang variety, like $\{0,1\}$ generates the variety of Boolean algebras and $[0,1]$ generates the variety of MV-algebras. In this case, we can say that the logic of $V(C)$-variety is the logic of rational fans, in the case of finite models, and of Cylinder rational fans, in the infinite case. Of course a more extensive analysis can be done by considering all the equational extensions of {\L}ukasiewicz logic.

\section{Conclusions}\label{conclusio}

As we have seen, the main goals of this paper are two: the construction of new functors, and the study of spectra of subclasses of MV-algebras. 

We have seen in this paper that indeed, using classical dualities, it is possible to build new functors involving the category of MV-algebras, which may give some information on this category. Moreover, we think that characterizing the spectra of subclasses of MV-algebras is an interesting research programme.  As it is shown by the results of this paper, we can take advantage that, by Stone duality, two MV-algebras have homeomorphic spectrum if and only if they have isomorphic lattices of principal ideals, so that the problem can be recast in lattice theoretic form. 

Moreover, following  Mundici who  states that {\L}ukasiewicz logic is the logic of rational polyhedra, we continue in building a bridge between polyhedral geometry and MV-algebras (and $\ell$-groups) theory. So from the previous results we can draw the following conclusions:

1) rational polyhedra of $[0,1]^n$ are the algebraic varieties of 	\L ukasiewicz logic;

2) rational cones of $[0,1]^n$ are in bijection with rational cones of $\R^n$ and the latter are the algebraic varieties of the equational logic of $\ell$-groups;

3) cylinder polyhedra of $[0,1]^I$, with $I$ a set possibly infinite, are the algebraic varieties of \L ukasiewicz logic in infinitely many variables, that is, zerosets of single polynomials in $[0,1]^I$;

4) likewise cylinder fans are the algebraic varieties of the same logic in item  2) but over infinitely many variables;

5) fans in $\Delta(\R)^n$ are the algebraic varieties of \L ukasiewicz logic plus the MV-algebraic axiom $(2x)^2=2(x^2)$, which axiomatizes the variety $\V(C)$.  

\medskip

This paper can be extended in many directions.

As we said, Plotkin in \cite{P2002} shows how to perform algebraic geometry in universal algebra. For the variety of MV-algebras, the theory has been conducted in \cite{BDLE}, and for $\ell$-groups, in \cite{DLV18}. One may continue by  considering subvarieties of MV-algebras, in particular the varieties generated by a Komori chain (where Chang's MV-algebra $C$ is the most simple example).

\end{document}